

\documentclass[12pt]{article}
\usepackage{amsmath,amssymb,amsthm,amscd,latexsym,}
 \font\goth=eusm10
\usepackage{graphicx}

\makeatletter
\renewcommand{\mod}[1]{\allowbreak \if@display \mkern 8mu \else
\mkern 5mu\fi {\operator@font mod}\,\,#1}
\makeatother

\newcommand{\bc}{\mathbb C}
\newcommand{\bn}{\mathbb N}
 \newcommand{\bq}{\mathbb Q}
\newcommand{\br}{\mathbb R}
\newcommand{\bz}{\mathbb Z}

\DeclareMathOperator{\rk}{rk}
\DeclareMathOperator{\ch}{ch}

\newtheorem{theorem}{Theorem}
\newtheorem{proposition}{Proposition}
\newtheorem{definition}{Definition}
\newtheorem{corollary}{Corollary}
\newtheorem{remark}{Remark}
\newtheorem{lemma}{Lemma}
\newtheorem{example}{Example}

\numberwithin{proposition}{section}
\numberwithin{definition}{section}
\numberwithin{corollary}{section}
\numberwithin{remark}{section}
\numberwithin{lemma}{section}
\numberwithin{equation}{section}
\numberwithin{theorem}{section}
\numberwithin{conjecture}{section}
\numberwithin{example}{section}

\newcommand\Hh{\mathcal H}
\newcommand\La{\mathcal L}

\newcommand\M{\mathcal M}
\newcommand\gggg{\mathfrak g}

\newcommand{\CC}{\mathbb C}
\newcommand{\ZZ}{\mathbb Z}
\newcommand{\cD}{\mathcal D}
\newcommand{\QQ}{\mathbb Q}
\newcommand{\supp}{\mathop{\null\mathrm {Supp}}\nolimits}
\newcommand{\Mapsto}{\mapstochar\longrightarrow}
\newcommand{\latt}[1]{{\langle{#1}\rangle}}
\newcommand{\PP}{\mathbb P}
\newcommand{\Orth}{\mathop{\null\mathrm {O}}\nolimits}
\newcommand{\id}{\mathop{\mathrm {id}}\nolimits}
\newcommand{\SO}{\mathop{\mathrm {SO}}\nolimits}

\newcommand{\emb}{\hookrightarrow}
\newcommand{\divv}{\mathop{\null\mathrm {div}}\nolimits}
\newcommand{\pr}{\mathop{\mathrm {pr}}\nolimits}
\newcommand{\Sum}{\sum\limits}
\newcommand{\cZ}{\mathcal Z}
\newcommand{\RR}{\mathbb R}
\newcommand{\cH}{\mathcal H}
\newcommand{\Sp}{\mathop{\mathrm {Sp}}\nolimits}
\newcommand{\gz}{\mathfrak z}
\newcommand{\HH}{\mathbb H}
\newcommand{\NN}{\mathbb N}
\newcommand{\SL}{\mathop{\mathrm {SL}}\nolimits}
\newcommand{\cB}{\mathcal B}
\newcommand{\Lift}{\mathop{\mathrm {Lift}}\nolimits}
\renewcommand{\Tilde}{\widetilde}




\begin{document}
\title{Lorentzian Kac-Moody algebras with\\ Weyl groups of 2-reflections}
\date{29 March 2016}
\author{Valery  Gritsenko\footnote{The first author was partially supported by a subsidy granted to the HSE by the  Government of the Russian Federation for the implementation of the Global Competitiveness Program and by Institut Univeristaire
de France (IUF).}\ \ \
and  Viacheslav V. Nikulin}
\maketitle

\begin{abstract}
We describe a new large  class of  Lorentzian Kac--Moody algebras.
For all ranks, we classify 2-reflective hyperbolic lattices $S$ with the
group of 2-reflections
of finite volume and with a lattice Weyl vector.  They define the corresponding
hyperbolic Kac--Moody algebras of restricted arithmetic type
which are graded by $S$. For most of them, we construct Lorentzian Kac--Moody algebras
which give their automorphic corrections: they are graded by the $S$, have the same
simple real roots, but their denominator identities are given by automorphic forms with
2-reflective divisors.
We give exact constructions of these automorphic forms as Borcherds products
and, in some cases, as additive Jacobi liftings.

\end{abstract}
\newpage

\section{Introduction} \label{introduction}
One of the  most known examples of Lorentzian Kac--Moody algebras
is the Fake Monster Lie algebra defined by R. Borcherds (see \cite{B3}--\cite{B4})
in his solution of  Moonshine Conjecture.
Lorentzian Kac--Moody (Lie, super) algebras are automorphic corrections
of hyperbolic Kac--Moody algebras.
In our papers \cite{Nik7}, \cite{Nik8}, \cite{Nik12}, \cite{GN1}--\cite{GN8},
we developed a general theory of Lorentzian Kac--Moody algebras
(see \cite{GN5} and \cite{GN6} for the most complete exposition)
based of the results by  Kac \cite{K1}--\cite{K3}
and Borcherds \cite{B1}--\cite{B4}.
In these our papers (especially see \cite{GN5} and \cite{GN6}),
we constructed and classified some of these algebras for the rank 3.

In this paper, we consruct and classify some of Lorentzian Kac--Moody algebras for
all ranks $\ge 3$. In our papers above and here, we mainly consider and classify
Lorentzian Kac--Moody algebras with Weyl groups $W$ of $2$-reflections. They
are groups generated by reflections in elements with square $2$ of
hyperbolic (that is of signature $(n,1)$) lattices
(that is integral symmetric bilinear forms) $S$ of $\rk S=n+1$.

For an (automorphic) Lorentzian Kac--Moody Lie algebra with a hyperbolic
{\it root} lattice $S$, the Weyl group $W$ must have the fundamental chamber $\M$
of finite ({\it elliptic case}) or almost finite ({\it parabolic case}) volume
in  the hyperbolic
(or Lobachevsky) space $\La(S)=V^+(S)/\br_{++}$ where $V^+(S)$ is a half of the
cone $V(S)\subset S\otimes \br$ of elements $x\in S\otimes \br$ with $x^2<0$.
For parabolic case,
there exists a point $c=\br_{++}r\in \M$, $r\in S$, $r\not=0$ and $r^2=0$, at
infinity of $\M$ such that $\M$ is finite at any
cone in $\La(S)$ with the vertex at $c$.

We denote by $P=P(\M)\subset S$ the set of {\it simple real roots}
or all elements of $S$ with
square $2$  which are perpendicular to faces of codimension one of $\M$
and directed outwards.
For a Lorentzian Kac--Moody algebra, $P=P(\M)$ must have
{\it the lattice Weyl vector} $\rho \in S\otimes \bq$ such that
$(\rho,\alpha)=-\alpha^2/2=-1$ for all $\alpha\in P=P(\M)$.
For elliptic case, $\rho^2=(\rho, \rho)<0$,
and $\rho^2=0$ for parabolic case where $\br_{++}\rho=c$.
For elliptic case, $W$ has finite index in $O(S)$,
then $S$ is called {\it elliptically $2$-reflective}.
For parabolic case, $O^+(S)/W$ is $\bz^m$, up to finite index,
for some $m>0$.
We want to construct Lorentzian Kac--Moody algebras with
the root lattice $S$, the set of simple real roots $P=P(\M)\subset S$
and the Weyl group $W$.

In this paper, we consider the basic case of this problem when
the Weyl group $W$ is the full group $W=W^{(2)}(S)$ generated by all reflections
in vectors with square $2$ of a hyperbolic even lattice $S$.

All elliptically $2$-reflective hyperbolic lattices $S$ when the
group  $W^{(2)}(S)$ has finite index in $O(S)$
were classified by the second author in \cite{Nik2} and \cite{Nik5}
for $\rk S\not=4$, and by E.B. Vinberg \cite{Vin5} for $\rk S=4$.
Their total  number is finite and $\rk S\le 19$.
The number of parabolically $2$ reflective hyperbolic lattices $S$
for $W=W^{(2)}(S)$ is also finite by \cite{Nik8}, but their full classification is
unknown. Many of them were found in $\cite{Nik2}$.
\smallskip

In Sect. \ref{sec:data1-3}, we give the list of elliptically $2$-reflective
even hyperbolic lattices $S$ from \cite{Nik2}, \cite{Nik5} and \cite{Vin4},
and in  Theorem \ref{th:el2refW}, we find those of them which have the lattice Weyl vector $\rho$
for $P=P(\M)$ of $W^{(2)}(S)$. {\bf There are 59 such lattices.}
$15$ of them are of rank $3$ and $44$ of rank $\ge 4$,
and the maximal rank is equal to $19$.
For all these lattices $S$, we calculate the set $P=P(\M)\subset S$ of
simple real roots and its Dynkin diagram which is equivalent to the
generalized Cartan matrix
\begin{equation}
A=\left((\alpha_1,\alpha_2)\right),\ \ \alpha_1,\alpha_2\in P=P(\M).
\label{genCartanA}
\end{equation}
This matrix defines the usual hyperbolic Kac--Moody algebra $\gggg(A)$,
see \cite{K1}. We calculate the lattice Weyl vector $\rho$ for $P=P(\M)$
for all these cases.

\smallskip

In Sect. \ref{pullback},
for an extended lattice $T=U(m)\oplus S$ of signature $(n+1,2)$
where $U$ is the even unimodular lattice of signature $(1,1)$,
$U(m)$ means that we multiply the pairing of the lattice $U$ by $m\in \bn$,
and $\oplus$ is the orthogonal sum of lattices, we consider the Hermitian
symmetric domain $\Omega(T)\cong S\otimes \br+iV^+(S)$.
For all $59$  lattices $S$ of Theorem 3, we conjecture existence for some $m$
of so called $2$-reflective holomorphic automorphic form $\Phi(z)\in M_k(\Gamma)$
on $\Omega(T)$ of weight $k>0$ with integral Fourier coefficients,
where  $\Gamma\subset O(T)$ is of finite index, whose divisor
is union of rational quadratic divisors with
multiplicity one orthogonal to the  elements with square $2$ of $T$.
{\it The Fourier coefficients of $\Phi(z)$ at a $0$-dimensional cusp define additional
sequence of simple imaginary roots ${P^\prime}^{im}\subset S$ with non-positive squares.}
The sequences  of the simple real roots $P$ and the imaginary simple roots
${P^\prime}^{im}$ define Lorentzian Kac--Moody--Borcherds Lie superalgebra
$\gggg(P(\M), \Phi)$  by exact generators and defining relations.
This superalgebra  is the {\it (automorphic)
Lorentzian Kac--Moody algebra} which we want to construct.
The Lorentzian Kac--Moody (Lie super) algebra $\gggg(P(\M), \Phi)$
is graded by $S$. {\it The dimensions
$\dim \gggg_\alpha(P(\M), \Phi)$,  $\alpha\in S$, of this grading
(equivalently, the multiplicities of all roots of the algebra) are defined
by the Borcherds product expansion of the automorphic form $\Phi(z)$ at a
zero dimensional cusp.}  See Sect. \ref{sec:data1-5} and
Sect. \ref{pullback} for
the exact definitions and details of the automorphic correction.

In this paper, we determine automorphic corrections  for $36$
of $59$ lattices of Theorem 3
but we consider here more than $70$ reflective modular forms.
We are planing to construct automorphic corrections for the rest $10$
of 2-reflective lattices of rank $4$ and $5$ from Theorem 3
in  a separate publication. Some of these functions
will be  modular with respect to congruence subgroups similar
to \cite{GN6}.
\smallskip

In Sect. \ref{sec:data1-5}, we give exact definitions of data (I) -- (V)
which define the Lorentzian Kac--Moody algebras for the case which we consider.
One can find more general definitions in our papers which we mentioned above.
\smallskip

In Sect. \ref{sec:data1-3}, we give classification of elliptically $2$-reflective
hyperbolic lattices $S$ with a lattice Weyl vector for $W^{(2)}(S)$. They
give all possible data (I) -- (III) for construction of the Lorentzian
Kac--Moody algebras which we consider.
\smallskip

In Sections  \ref{pullback}--\ref{seriesD8}, we find automorphic forms which finalize the construction
of the (automorphic) Lorentzian Kac--Moody algebra $\gggg(P(\M), \Phi)$
and  give automorphic corrections of the usual Kac--Moody algebra $\gggg(A)$
defined in \eqref{genCartanA}. We note that $\gggg(A)$  might have many
automorphic corrections! We give two such examples in Proposition \ref{Pr-2-forms}
and Theorem \ref{U(2)D4}.

In Sect. \ref{pullback}, we analyse the quasi pull-backs of the Borcherds modular form
$\Phi_{12}$ for $\Orth(II_{26,2},\det)$, construct $34$ strongly $2$-reflective
modular  forms which determine  the automorphic corrections
of $25$ hyperbolic lattices from Theorem 3 and of $9$ parabolically $2$-reflective hyperbolic lattices.
We note that the modular objects related to these Lorentzian Kac--Moody algebras
are very arithmetic. The $25$ modular  forms are cusp forms which
are new eigenfunctions of all Hecke operators (see  Corollary  \ref{Hecke1}).
{\it One can consider these cusp forms as  generalisations of the Ramanujan $\Delta$-function.}
All $34$ corresponding modular varieties of orthogonal type are, at least, uniruled
(see Corollary \ref{uniruled}).

In Sect. \ref{JBP},  we describe Borcherds products
of Jacobi type of the quasi pull-backs from Sect. \ref{pullback}.
 Our approach gives an explicite formula for the first
two Fourier-Jacobi coefficients of the reflective modular forms
and an interesting  relation between Lorentzian Kac--Moody algebras
and some affine Lie algebras in terms of the denominator functions.

In Sect. \ref{seriesD8}, we construct automorphic corrections of
fourteen hyperbolic root systems
of  Theorem \ref{th:el2refW} and four automorphic corrections of hyperbolic root systems
of parabolic type. Almost all modular forms of Sect. \ref{seriesD8} have simple Fourier expansions
because they are additive Jacobi liftings of Jacobi forms related to the dual lattices of some root lattices.
Some reflective modular forms from Sect. \ref{seriesD8}  determine automorphic corrections
of hyperbolic Kac--Moody algebras with  Weyl groups which
are overgroups or subgroups of the Weyl groups of type $W^{(2)}(S)$ considered in this paper.

We note that  the  denominator functions of the corresponding Lorentzian Kac--Moody algebras are automorphic discriminants of moduli spaces of some
$K3$ surfaces with a condition  on Picard lattices and they realise
the arithmetic mirror symmetry for such $K3$ surfaces
(see \cite{GN3}, \cite{GN7} and \cite{GN8}).

\section{Definition of Lorentzian Kac--Moody\\ algebras corresponding
to 2-reflective\\ hyperbolic lattices with a lattice\\ Weyl vector}
\label{sec:data1-5}

Here we want to give definition of Lorentzian Kac--Moody
algebras which we want to construct and consider in this paper.
They are given by data (I) --- (V) below.
We follow the general theory of
Lorentzian Kac--Moody algebras from our papers \cite{GN5}, \cite{GN6},
\cite{GN8} and \cite{Nik7}, \cite{Nik8}, \cite{Nik12} where we used ideas and results
by  Kac \cite{K1}--\cite{K3} and Borcherds \cite{B1}--\cite{B4}.
One can find more general definitions and possible data in these
our papers.

\medskip

(I) The datum (I) is given by a {\it hyperbolic lattice} $S$ of the
rank $\rk S\ge 3$.

We recall that a lattice (equivalently, a non-degenerate symmetric bilinear form
over $\bz$) $M$ means that $M$ is a free $\bz$-module $M$ of a finite
rank with symmetric $\bz$-bilinear non-degenerate pairing $(x,y)\in \bz$
for $x,y\in M$. A lattice $M$ is hyperbolic if the corresponding
symmetric bilinear form $M\otimes \br$ over $\br$ has signature
$(n,1)$ where $\rk M=n+1$.

\medskip

(II) This datum is given by the {\it Weyl group} which is the
2-reflection group $W=W^{(2)}(S)\subset O(S)$
of the hyperbolic lattice $S$ from (I). It is
generated by $2$-reflections $s_\alpha$ in all $2$-roots $\alpha \in S$
that is $\alpha^2=(\alpha,\alpha)=2$.

We recall that an element $\alpha$ of a lattice $M$ is called {\it root}
if $\alpha^2 >0$ and $\alpha^2\ | 2(\alpha,M)$ that is $\alpha^2|2(\alpha,x)$
for any $x\in M$. A root $\alpha\in M$ defines the reflection
\begin{equation}
s_\alpha: x\to x -(2(x,\alpha)/\alpha^2)\alpha,\ \ \forall x \in M
\label{refl}
\end{equation}
which belongs to the automorphism group $O(M)$ of the lattice $M$.
The reflection $s_\alpha$ is characterized by the properties:
$s_\alpha(\alpha)=-\alpha$ and $s_\alpha|(\alpha)^\perp_M$ is identity.
Any element $\alpha\in M$ with $\alpha^2=2$ gives a root of $M$.

\medskip

(III) This datum is given by the {\it set of simple real roots}
$P=P(\M)\subset S$ of all 2-roots which are perpendicular and
directed outwards to the fundamental chamber $\M\subset \La(S)$
of the Weyl group $W=W^{(2)}(S)$ acting in the hyperbolic space $\La(S)$
defined by $S$. The set $P=P(\M)$ must have the {\it lattice Weyl vector}
$\rho \in S\otimes \bq$ such that
\begin{equation}
(\rho,\alpha)=-1\ \ \forall \alpha\in P=P(\M).
\label{weylvec}
\end{equation}
The fundamental chamber $\M$ must have either a finite volume (then $S$ is
called {\it elliptically 2-reflective}) and
then $\rho^2<0$ and $P=P(\M)$ is finite ({\it elliptic case}), or
almost finite volume (then $S$ is called {\it parabolically 2-reflective})
and $\rho^2=0$, but $\rho\not=0$ ({\it parabolic
case}). Here almost finite volume means that $\M$ has finite volume
in any cone with the vertex $\br^{++}\rho$ at infinity of $\M$.

We recall that, for a hyperbolic
lattice $M$, we can consider the cone
$$
V(M)=\{x\in M\otimes \br\ |\ x^2<0\}
$$
of $M$, and its  half cone $V^+(M)$.
Any two elements $x,y\in V^+(M)$ satisfy $(x,y)<0$.
The half-cone $V^+(M)$ defines {\it the hyperbolic space of $M$,}
$$
\La^+ (M)=V^+(M)/\br_{++}=\{\br_{++}x\ |\ x\in V^+(M)\}
$$
of the curvature $(-1)$ with the hyperbolic distance
$$
\ch\rho(\br_{++}x, \br_{++}y)=\frac{-(x,y)}{\sqrt{x^2y^2}},\ \ x,y\in V^+(M).
$$
Here $\br_{++}$ is the set of all positive real numbers, and $\br_+$
is the set of all non-negative real numbers.
Any $\delta \in M\otimes \br$ with $\delta^2>0$ defines
{\it a half-space}
$$
\Hh^+_\delta=\{\br_{++}x\in \La^+(M)\ |\ (x, \delta)\le 0\}
$$
of $\La (M)$ bounded by the {\it hyperplane}
$$
\Hh_\delta=\{\br_{++}x\in \La^+(M)\ |\ (x, \delta)= 0\}\,.
$$
The $\delta$ is called orthogonal to the half-space
$\Hh^+_\delta$ and the hyperplane $\Hh_\delta$,
and it is defined uniquely if $\delta^2>0$ is
fixed.
For a root $\alpha\in M$, the reflection $s_\alpha$ gives the reflection
of $\La^+(M)$
with respect to the hyperplane $\Hh_\alpha$, that is $s_\alpha$ is
identity on $\Hh_\alpha$,
and $s_\delta(\Hh^+_\alpha)=\Hh^+_{-\alpha}$.
It is well-known that the group
$$
O^+(S)=\{\phi \in O(S)\ |\ \phi(V^+(S))=V^+(S) \}
$$
is discrete in $\La^+(S)$ and has a fundamental domain of finite
volume. The subgroup $W^{(2)}(S)$ is its subgroup generated by $2$-reflections.

The main invariant of the data (I) --- (III) is {\it the generalized Cartan
matrix}
\begin{equation}
A=\left(\frac{2(\alpha, \alpha^\prime)}{(\alpha,\alpha)}\right)=
\left((\alpha,\alpha^\prime)\right),\ \ \ \alpha,\alpha^\prime \in P=P(\M).
\label{genCartmatr}
\end{equation}
It is symmetric for the case we consider. It defines the corresponding
{\it hyperbolic Kac--Mody algebra $\gggg(A)$}, see \cite{K1}.
It has {\it restricted arithmetic type}
and is {\it graded by the lattice $S$.}
See \cite{Nik7} and \cite{Nik8} for details. The next data (IV) and (V) give
the {\it automorphic correction} $\gggg$  of this algebra.

\medskip

(IV) For this datum, we need an extended lattice
$T=U(m)\oplus S$ ({\it the symmetry lattice of the Lie algebra $\gggg$}) where
\begin{equation}
U=
\left(\begin{array}{rr}
 0 & -1  \\
-1 &  0
\end{array}\right),
\label{latU}
\end{equation}
$M(m)$ for a lattice $M$ and $m\in \bq$ means
that we multiply the pairing of $M$ by $m$,
the orthogonal sum of lattices is denoted by $\oplus$.
The lattice $T$ defines the Hermitian symmetric domain
of the type IV
\begin{equation}
\Omega (T)=\{\bc \omega\subset T\otimes \bc\ |\ (\omega, \omega)=0,\
(\omega,\overline{\omega})<0\}^{+}
\label{typeIVdom}
\end{equation}
where $+$ means a choice of one (from two) connected components.
The domain $\Omega(T)$ can be identified with
the complexified cone $\Omega(V^+(S))=S\otimes \br + iV^+(S)$ as follows:
for the basis $e_1,e_2$ of the lattice $U$ with the matrix \eqref{latU},
we identify $z\in \Omega(V^+(S))$ with $\bc\omega_z\in \Omega(T)$
where $\omega_z=(z,z)e_1/2+e_2/m\oplus z\in \Omega(T)^{\bullet}$
(the corresponding affine cone over $\Omega(T)$).
 The main datum in (IV) is a {\it holomorphic automorphic form}
$\Phi (z)$, $z\in \Omega(V^+(S))=\Omega(T)$ of some weight $k\in \bz/2$
on the Hermitian symmetric domain $\Omega(V^+(S))=\Omega (T)$
of the type IV with respect to a subgroup $G\subset O^+(T)$
of a finite index ({\it the symmetry group
of the Lie algebra $\gggg$}. Here $O^+(T)$ is the index two subgroup of
$O(T)$ which preserves $\Omega(T)$.

The automorphic form $\Phi(z)$ must have Fourier expension
which gives the denominator identity for the Lie algebra $\gggg$:
\begin{equation}
\Phi(z)=\sum_{w\in W}{\det(w)(exp(-2\pi i(w(\rho),z))}-
$$
$$
-\sum_{a\in S\cap \br_{++}\M}{m(a)exp(-2\pi i (w(\rho+a),z)))},
\label{denidlor1}
\end{equation}
where all coefficients $m(a)$ must be integral.
It also would be nice to calculate the
{\it infinite product expension (the Borcherds product) for the denominator identity of
the Lie algebra $\gggg$}
\begin{equation}
\Phi(z)=exp(-2\pi i (\rho,z))\prod_{\alpha\in \Delta_+}{(1-exp(-2\pi i (\alpha,z)))^{mult(\alpha)}}\ ,
\label{denidlor2}
\end{equation}
which gives {\it multiplicities $mult(\alpha)$}
of roots of the Lie algebra $\gggg$.
Here $\Delta_+\subset S$ (see below).

\medskip

(V) The automorphic form $\Phi(z)$ in $\Omega(V^+(S))=\Omega(T)$ must be
{\it 2-reflec\-tive.} It means that the divisor (of zeros) of
$\Phi(z)$ is union of rational quadratic divisors
which are orthogonal
to 2-roots of $T$. Hear, for $\beta\in T$ with $\beta^2>0$
{\it the rational quadratic divisor which
is orthogonal to $\beta$,} is equal to
$$
D_\beta=\{\bc \omega\in \Omega(T)\ |\ (\omega,\beta)=0\}.
$$
The property (V) is valid in a neighbourhood of the cusp of
$\Omega (T)$ where the infinite product \eqref{denidlor2} converges, but
we want to have it globally.

\medskip

For our case, the {\it Lorentzian Kac--Moody superalgebra
$\gggg$ corresponding to data (I) --- (V),}
which is a Kac--Moody--Borcherds superablebra or an
{\it automorphic correction} given by $\Phi(z)$ of
the Kac--Moody algebra $\gggg(A)$
given by the generalized Cartan matrix
\eqref{genCartmatr} above,
is defined by the sequence $P^\prime \subset S$ of
{\it simple roots.} It is divided to the set ${P^{\prime}}^{\,re}$ of
{\it simple real root} (all of them are even)
and the set ${P^{\prime}}^{\,im}_{\overline{0}}$
{\it of even simple imaginary roots} and the set
${P^{\prime}}^{\,im}_{\overline{1}}$
{\it of odd imaginary roots.}
Thus, $P^\prime={P^{\prime}}^{\,re}\cup
{P^{\prime}}^{\,im}_{\overline{0}}\cup {P^{\prime}}^{\,im}_{\overline{1}}$.

\medskip

For a primitive $a\in S\cap \br_{++}\M$ with $(a,a)=0$ one should find
$\tau(na)\in \bz$, $n\in \bn$, from the indentity with the formal variable $t$:
$$
1-\sum_{k\in \bn}{m(ka)t^k}=\prod_{n\in \bn}{(1-t^n)^{\tau(na)}}.
$$

\medskip

The set ${P^\prime}^{\,re}=P$ where $P$ is defined in
(III). The set ${P^\prime}^{\,re}$ is even:
${P^\prime}^{\,re}={{P^\prime}^{\,re}}_{\overline{0}}$,
${{P^\prime}^{\,re}}_{\overline{1}}=\emptyset$.
The set
\begin{equation}
{{P^\prime}^{\,im}}_{\overline{0}}=
\{m(a)a\ |\ a\in S\cap \br_{++}\M,\ (a,a)<0\ and\ m(a)>0\}\cup
$$
$$
\{\tau (a)a\ |\ a\in S\cap \br_{++}\M,\ (a,a)=0\ and\ \tau (a)>0\};
\label{rootim0}
\end{equation}

\medskip

\begin{equation}
{{P^\prime}^{\,im}}_{\overline{1}}=
\{-m(a)a\ |\ a\in S\cap \br_{++}\M,\ (a,a)<0\ and\ m(a)<0\}\cup
$$
$$
\{-\tau (a)a\ |\ a\in S\cap \br_{++}\M,\ (a,a)=0\ and\ \tau (a)<0\}
\label{rootim1}
\end{equation}
Here, $ka$ for $k\in \bn$ means that we repeat $a$ exactly $k$
times in the sequence.

The generalized Kac--Moody superalgebra $\gggg$ is the Lie superalgebra
with generators $h_r$, $e_r$, $f_r$ where $r\in P^\prime$. All
generators $h_r$ are even, generators $e_r$, $f_r$ are even
(respectively odd) if $r$ is even (respectively odd).

They have defining relations 1) --- 5) of $\gggg$
which are given below.

\medskip

1) The map $r \to h_r$ for $r\in P^\prime$ gives
an embedding $S\otimes \bc$ to $\gggg$ as Abelian subalgebra
(it is even).

\medskip

2) $[h_r,e_{r^\prime}]=(r,r^\prime)e_{r^\prime}$ and
$[h_r,f_{r^\prime}]=-(r,r^\prime)f_{r^\prime}$.

\medskip

3) $[e_r,f_{r^\prime}]=h_r$ if $r=r^\prime$, and it is $0$,
if $r\not=r^\prime$.

\medskip

4) $(ad\ e_r)^{1-2(r,r^\prime)/(r,r)}e_{r^\prime}=
(ad\ f_r)^{1-2(r,r^\prime)/(r,r)}f_{r^\prime}=0$, \newline
if $r\not=r^\prime$ и $(r,r)>0$
(equivalently, $r\in {P^{\prime}}^{\ re}$).

\medskip

5) If $(r,r^\prime)=0$, then $[e_r,e_{r^\prime}]=[f_r,f_{r^\prime}]=0.$

\medskip

The algebra $\gggg$ is graded by the lattice $S$ where the generators $h_r$, $e_r$ and
$f_r$ have weights $0$, $r\in S$ and $-r\in S$ respectively. We have
\begin{equation}
\gggg=\bigoplus_{\alpha\in S}{\gggg_\alpha}=\gggg_0\bigoplus
\left(\bigoplus_{\alpha\in \Delta_+}{\gggg_\alpha}\right)\bigoplus
\left(\bigoplus_{\alpha\in -\Delta_+}{\gggg_\alpha}\right),
\label{Liegrading}
\end{equation}
where $\gggg_0=S\otimes \bc$, and $\Delta$ is the set of roots (that is the set of $\alpha\in S$
with $\dim \gggg_\alpha\not=0$). The root $\alpha$ is positive $(\alpha\in \Delta_+)$ if $(\alpha,\M)\le 0$.
By definition, the multiplicity of $\alpha\in \Delta$ is equal to $mult(\alpha)=\dim \gggg_{\alpha,\overline{0}}-
\dim \gggg_{\alpha,\overline{1}}$.

For this definition, we use results by Borcherds, authors, U. Ray.

\medskip

In Section \ref{sec:data1-3}, we give the classification of possible data (I) --- (III)
of elliptic type.
In Sections \ref{pullback}---\ref{seriesD8}, we construct some data (IV) --- (V)
for these data (I) --- (III)
of elliptic type and for some data (I) --- (III) of parabolic type.


\newpage

\section{Classification of elliptically 2-reflective\\
hyperbolic lattices with lattice Weyl\\ vectors}
\label{sec:data1-3}

\subsection{Notations}
\label{subsec:2-ref,not}
We follow definitions from Sec. \ref{sec:data1-5} of lattices,
hyperbolic lattices, roots, $2$-roots, reflections in roots,
hyperbolic spaces of hyperbolic lattices.

For a lattice $M$, we denote by $x\cdot y=(x,y)$, $x,y\in M$
the symmetric pairing of $M$, and $x^2=x\cdot x$, $x\in M$.

For a hyperbolic lattice $S$, we denote by $V^+(S)$ the half-cone
of $S$ and by $\La^+(S)=V^+(S)/\br_{++}$ the hyperbolic space of
$S$. By $\Hh_\delta$ and $\Hh_\delta^+$ we denote the hyperplane
and the half-space of $\La^+(S)$ which are orthogonal  to
$\delta\in S\otimes \br$
where $\delta^2>0$.

\subsection{Classification of elliptically 2-reflective hyperbolic\\ lattices}
\label{subsec:clas.ell2-refl}

Let $S$ be a hyperbolic lattice of the signature $(n,1)$ where
$\rk S=n+1\ge 3$.

It is well-known that the group
$$
O^+(S)=\{\phi \in O(S)\ |\ \phi(V^+(S))=V^+(S) \}
$$
is discrete in $\La^+(S)$ and has a fundamental domain of finite
volume. The subgroups $W(S)$ and $W^{(2)}(S)$ are
its subgroups generated by all and $2$-reflections respectively.
We denote by $\M\subset \La^+(S)$ and $\M^{(2)}\subset \La^+(S)$ their
fundamental chambers respectively.

\begin{definition} A hyperbolic lattice $S$ of $\rk S\ge 3$ is called
{\it elliptically reflective} (respectively elliptically $2$-reflective)
if $[O(S):W(S)]<\infty$ (respectively, $[O(S):W^{(2)}(S)]<\infty$).
Equivalently, the fundamental chamber $\M\subset \La^+(S)$
(respectively the fundamental chamber $\M^{(2)}\subset \La^+(S)$)
has finite volume.
\label{defrefl}
\end{definition}

In \cite{Nik2} for $\rk S\ge 5$, \cite{Vin4} (see also \cite{Nik6})
for $\rk S=4$, \cite{Nik5} for $\rk S=3$, all elliptically
$2$-reflective hyperbolic lattices were classified. It is enough to
classify even $2$-reflective hyperbolic lattices. Indeed, an odd
lattice will be $2$-reflective if and only if its maximal even
sublattice is $2$-reflective.

The list of all even elliptically $2$-reflective hyperbolic lattices is given
below. We use the standard notations. By $A_n$, $n\ge 1$;
$D_n$, $n\ge 4$; $E_n$, $n=6,\,7,\,8$, we denote the positive definite
root lattices of the corresponding root systems ${\mathbb A}_n$,
${\mathbb D}_n$, ${\mathbb E}_n$ with roots having the square $2$.
Their standard bases consist of bases of the the root systems.  By $U$,
we denote the hyperbolic even unimodular lattices of the rank $2$. For the
standard basis $\{c_1,c_2\}$, it has the matrix
$$
U=
\left(\begin{array}{rr}
 0 & -1  \\
-1 &  0
\end{array}\right).  \ \ \ \
$$
By $M(n)$ we denote a lattice which is obtained from $M$ by
multiplication by $n\in \bq$ of the form of a lattice $M$.
By $\langle A \rangle$ we denote a lattice defined by the symmetric
integral matrix $A$ in some (we call it standard) basis. If a lattice $M$
has a standard basis $e_1,\dots, e_n$, then $M[\alpha_1,\dots, \alpha_n]$
denotes a lattice which is obtained by adding to $M$ the element
$\alpha_1e_1+\cdots + \alpha_n e_n$. Here $\alpha_1,\dots,\alpha_n$ are
from $\bq$.
By $\oplus$ we denote the orthogonal sum of lattices. If $M_1$ and $M_2$ have
the standard bases, then the standard basis of $M_1\oplus M_2$ consists of
the union of these bases.

We have the following list of all
$2$-reflective hyperbolic lattices of rank $\ge 3$, up to isomorphisms:
\vskip0.5cm

{\bf The list of all elliptically 2-reflective even hyperbolic lattices
of rank $\ge 3$.}

\medskip

\noindent
If $\rk S=3$, there are 26 lattices which are 2-reflective. They are
described in \cite{Nik5} (We must correct the list of these lattices
in \cite{Nik5}: In notations of \cite{Nik5}, the lattices
$S^\prime_{6,1,2}=[3a+c,b,2c]$ and $S_{6,1,1}=[6a,b,c]$ are isomorphic,
they have isomorphic fundamental polygons. See calculations for the
proof of Theorem \ref{th:el2refW} below.)

\medskip

\noindent
If $\rk S=4$, then $S=\langle -8 \rangle\oplus 3A_1$; $U\oplus 2A_1$;
$\langle -2 \rangle \oplus 3A_1$; $U(k)\oplus 2A_1$, $k=3,\,4$;
$U\oplus A_2$; $U(k)\oplus A_2$, $k=2,\,3,\,6$;
$\left\langle\begin{array}{rr}
 0 & -3  \\
 -3 & 2
\end{array}\right\rangle \oplus A_2$; $\langle -4 \rangle\oplus
\langle 4\rangle\oplus A_2$;
$\langle -4 \rangle\oplus A_3$;

\noindent
$\left\langle\begin{array}{rrrr}
 -2 & -1 & -1 & -1  \\
 -1  & 2 & 0 & 0 \\
 -1  & 0 & 2 & 0 \\
 -1  & 0 & 0 & 2
\end{array}\right\rangle$,\ \ \ \ \ \
$\left\langle\begin{array}{rrrr}
 -12 & -2 & 0 & 0   \\
  -2  & 2 & -1 & 0  \\
  0  & -1 & 2 & -1 \\
  0  & 0 & -1 & 2 \\
\end{array}\right\rangle
$.

\medskip

\noindent
If $\rk S=5$, then $S=U\oplus 3A_1$; $\langle -2 \rangle \oplus 4A_1$;
$U\oplus A_1\oplus A_2$;
$U\oplus A_3$; $U(4)\oplus 3A_1$;
$\langle 2^k\rangle \oplus D_4$, $k=2,\,3,\,4$;
$\langle 6 \rangle \oplus 2A_2$.

\medskip

\noindent
If $\rk S=6$, then $S=U\oplus D_4$, $U(2)\oplus D_4$, $U\oplus 4A_1$,
$\langle -2 \rangle \oplus 5A_1$,
$U\oplus 2A_1\oplus A_2$, $U\oplus 2A_2$, $U\oplus A_1\oplus A_3$,
$U\oplus A_4$, $U(4)\oplus D_4$, $U(3)\oplus 2A_2$.

\medskip

\noindent
If $\rk S=7$, then $S=U\oplus D_4\oplus A_1$, $U\oplus 5A_1$,
$\langle -2 \rangle \oplus 6A_1$, $U\oplus A_1\oplus 2A_2$,
$U\oplus 2A_1\oplus A_3$,
$U\oplus A_2\oplus A_3$, $U\oplus A_1\oplus A_4$,
$U\oplus A_5$, $U\oplus D_5$.

\medskip

\noindent
If $\rk S=8$, then $S=U\oplus D_6$, $U\oplus D_4\oplus 2A_1$,
$U\oplus 6A_1$, $\langle -2 \rangle \oplus 7A_1$,
$U\oplus 3A_2$, $U\oplus 2A_3$, $U\oplus A_2\oplus A_4$,
$U\oplus A_1\oplus A_5$, $U\oplus A_6$, $U\oplus A_2\oplus D_4$,
$U\oplus A_1\oplus D_5$, $U\oplus E_6$.

\medskip

\noindent
If $\rk S=9$, then $S=U\oplus E_7$, $U\oplus D_6\oplus A_1$,
$U\oplus D_4\oplus 3A_1$, $U\oplus 7A_1$, $\langle -2 \rangle \oplus 8A_1$,
$U\oplus A_7$, $U\oplus A_3\oplus D_4$, $U\oplus A_2\oplus D_5$,
$U\oplus D_7$, $U\oplus A_1\oplus E_6$.

\medskip

\noindent
If $\rk S=10$, then $S=U\oplus E_8$, $U\oplus D_8$,
$U\oplus E_7\oplus A_1$,
$U\oplus D_4\oplus D_4$, $U\oplus D_6\oplus 2A_1$,
$U(2)\oplus D_4\oplus D_4$,
$U\oplus 8A_1$, $U\oplus A_2\oplus E_6$.

\medskip

\noindent
If $\rk S=11$, then $S=U\oplus E_8\oplus A_1$, $U\oplus D_8\oplus A_1$,
$U\oplus D_4\oplus D_4\oplus A_1$,
$U\oplus D_4\oplus 5A_1$.

\medskip

\noindent
If $\rk S=12$, then $S=U\oplus E_8\oplus 2A_1$, $U\oplus D_8\oplus 2A_1$,
$U\oplus D_4\oplus D_4\oplus 2A_1$, $U\oplus A_2\oplus E_8$.

\medskip

\noindent
If $\rk S=13$, then $S=U\oplus E_8\oplus 3A_1$,
$U\oplus D_8\oplus 3A_1$, $U\oplus A_3\oplus E_8$.

\medskip

\noindent
If $\rk S=14$, then $S=U\oplus E_8\oplus D_4$,
$U\oplus D_8\oplus D_4$, $U\oplus E_8\oplus 4A_1$.

\medskip

\noindent
If $\rk S=15$, then $S=U\oplus E_8\oplus D_4\oplus A_1$.

\medskip

\noindent
If $\rk S=16$, then $S=U\oplus E_8\oplus D_6$.

\medskip

\noindent
If $\rk S=17$, then $S=U\oplus E_8\oplus E_7$.

\medskip

\noindent
If $\rk S=18$, then $S=U\oplus 2E_8$.

\medskip

\noindent
If $\rk S=19$, then $S=U\oplus 2E_8\oplus A_1$.

\medskip

\noindent
If $\rk S\ge 20$, there are no such lattices.

\medskip

Calculations of their fundamental chambers $\M^{(2)}$ and
the finite sets $P(\M^{(2)})$ of 2-roots which are perpendicular
to codimension one faces of $\M^{(2)}$ and directed outwards are also known.
See \cite{Nik2}, \cite{Vin4} and \cite{Nik5} for almost all cases.
See also below.

\begin{remark}
\label{rem:K3finautgroup}
{\rm By global Torelli Theorem for K3 surfaces \cite{PS},
ellptically $2$-reflective hyperbolic lattices $S$
from the list above, give all Picard lattices $S_X=S(-1)$ of
K3 surfaces $X$ over $\bc$ with finite automorphism group and
$\rk S_X\ge 3$.  They have signature $(1,n)$ where
$\rk S_X=\rk S(-1)=\rk S=n+1\ge 3$.
The set $P(\M^{(2)})\subset S(-1)=S_X$ gives classes
of all non-singular rational curves on $X$.
Their number is finite and they generate $S_X$ up to finite index.}
\end{remark}

\begin{remark}
\label{rem:Finreflat}
{\rm There are general finiteness results about reflective
hyperbolic lattices and arithmetic hyperbolic reflection groups.
See \cite{Nik3}, \cite{Nik4}, \cite{Nik6},
\cite{Nik9} and \cite{Vin3}, \cite{Vin4}.

Classification of maximal reflective
(elliptically, parabolically or hyperbolically)
hyperbolic lattices of rank 3 was obtained in \cite{Nik11}.
Classification of elliptically reflective hyperbolic lattices of rank 3
was obtained by D. Allcock in \cite{Al1}.}
\end{remark}

\subsection{Classification of elliptically 2-reflective even\\ hyperbolic
lattices $S$ with lattice Weyl vector\\ for $W^{(2)}(S)$}
\label{subsec:clas.el2-refWeyl}

The particular cases of elliptically 2-reflective hyperbolic lattices will be
important for us. They are characterized by the property that they
have the lattice Weyl vector.

Let $S$ be an elliptically 2-reflective hyperbolic lattice,
$\M^{(2)}(S)\subset \La^+(S)$ the fundamental chamber for $W^{(2)}(S)$,
and $P(\M^{(2)}(S))$ the set of perpendicular $2$-roots to $\M^{(2)}(S))$
directed outwards.
That is
$$
\M^{(2)}(S)=\{\br_{++}x\in \La^+(S)\ |\ x\cdot P(\M^{(2)}(S))\le 0\}
$$
and $P(\M^{(2)}(S))$ is minimal with this property.

\begin{definition} A 2-reflective hyperbolic lattice $S$ has a
lattice Weyl vector for
$W^{(2)}(S)$ (equivalently, for $P(\M^{(2)}(S))$) if there exists
$\rho\in S\otimes \bq$
such that
\begin{equation}
\rho\cdot \delta=-1\ \ \ \forall\  \delta\in P(\M^{(2)}(S)).
\label{latWfor2}
\end{equation}
The $\rho$ is called the lattice Weyl vector for $P(\M^{(2)}(S))$.

More generally, a reflective (elliptically or parabolically) hyperbolic lattice $S$ has a
lattice Weyl vector for a reflection subgroup
$W\subset W(S)$ and a set $P(\M)$ of roots of $S$ which are perpendicular to a
fundamental chamber $\M$ of $W$ and directed outwards if there exists
$\rho\in S\otimes \bq$
such that
\begin{equation}
\rho\cdot \delta=-\frac{\delta^2}{2}\ \ \ \forall\  \delta\in P(\M).
\label{latWfor}
\end{equation}
The $\rho$ is called the lattice Weyl vector for $P(\M)$.

\end{definition}

Since $\M^{(2)}(S)$ has finite volume, the set $P(\M^{(2)}(S))$
generates $S\otimes \bq$, and the $\rho$ is defined uniquely.
The $\br_{++}\rho$ belongs
to the interior of $\M^{(2)}(S)$, and $\rho^2<0$. Geometrically,
$\br_{++}\rho$ gives a center of a sphere which is inscribed
to the fundamental chamber $\M^{(2)}$.
Thus, this case is especially special and beautiful.

Using classification of elliptically 2-reflective hyperbolic lattices,
we obtain classification of
elliptically 2-reflective hyperbolic lattices with lattice Weyl vectors.

\begin{theorem} The following and only the following elliptically
2-reflective even hyperbolic lattices $S$ of $\rk S\ge 3$ have
a lattice Weyl vector $\rho$ for $W^{(2)}(S)$ (equivalently,
for $P(\M^{(2)}(S))$). We order them by the rank and the absolute
value of the determinant.
\label{th:el2refW}

\noindent
Rank 3:
$S_{3,2}=U\oplus A_1$,
$S_{3,8,a}=\langle -2\rangle \oplus 2A_1$,

\noindent
$S_{3,8,b}=(\langle -24 \rangle\oplus A_2)[1/3,-1/3,1/3]$,

\noindent
$S_{3,18}=U(3)\oplus A_1$,
$S_{3,32,a}=U(4)\oplus A_1$,
$S_{3,32,b}=\langle -8\rangle \oplus 2A_1$,
$S_{3,32,c}=U(8)[1/2,1/2]\oplus A_1$,
$S_{3,72}=\langle -24\rangle\oplus A_2$,
$S_{3,128,a}=U(8)\oplus A_1$,
$S_{3,128,b}=\langle -32\rangle \oplus 2A_1$,
$S_{3,288}=U(12)\oplus A_1$,

\noindent
anisotropic cases:
$S_{3,12}=\langle -4\rangle \oplus A_2$,
$S_{3,24}=\langle -6\rangle \oplus 2A_1$,
$S_{3,36}=\langle -12\rangle \oplus  A_2$,
$S_{3,108}=\langle -36\rangle \oplus A_2$.

\medskip

\noindent
Rank 4: $S_{4,3}=U\oplus A_2$, $S_{4,4}=U\oplus 2A_1$,
$S_{4,12}=U(2)\oplus A_2$, $S_{4,16,a}=\langle -2 \rangle\oplus 3A_1$,
$S_{4,16,b}=\langle -4\rangle\oplus A_3$, $S_{4,27,a}=U(3)\oplus A_2$,
$S_{4,27,b}=\left\langle\begin{array}{rr}
 0 & -3  \\
 -3 & 2
\end{array}\right\rangle \oplus A_2$,
$S_{4,64,a}=U(4)\oplus 2A_1$, $S_{4,64,b}=\langle -8 \rangle\oplus 3A_1$,
$S_{4,108}=U(6)\oplus A_2$,

\noindent
$S_{4,28}=
\left\langle\begin{array}{rrrr}
 -2 & -1 & -1 & -1  \\
 -1  & 2 & 0 & 0 \\
 -1  & 0 & 2 & 0 \\
 -1  & 0 & 0 & 2
\end{array}\right\rangle$ (anisotropic case).

\medskip

\noindent
Rank 5: $S_{5,4}=U\oplus A_3$, $S_{5,8}=U\oplus 3A_1$,
$S_{5,16}=\langle -4\rangle \oplus D_4$,
$S_{5,32,a}=\langle -2 \rangle\oplus 4A_1$,
$S_{5,32,b}=\langle -8\rangle\oplus D_4$,
$S_{5,64}=\langle -16 \rangle\oplus D_4$,
$S_{5,128}=U(4)\oplus 3A_1$.

\medskip

\noindent
Rank 6: $S_{6,4}=U\oplus D_4$, $S_{6,5}=U\oplus A_4$,
$S_{6,9}=U\oplus 2A_2$, $S_{6,16,a}=U(2)\oplus D_4$,
$S_{6,16,b}=U\oplus 4A_1$, $S_{6,64,a}=\langle -2 \rangle \oplus 5A_1$,
$S_{6,64,b}=U(4)\oplus D_4$, $S_{6,81}=U(3)\oplus 2A_2$.

\medskip

\noindent
Rank 7: $S_{7,4}=U\oplus D_5$, $S_{7,6}=U\oplus A_5$,
$S_{7,128}=\langle -2 \rangle \oplus 6A_1$.

\medskip

\noindent
Rank 8: $S_{8,3}=U\oplus E_6$, $S_{8,4}=U\oplus D_6$, $S_{8,7}=U\oplus A_6$,
$S_{8,16}=U\oplus 2A_3$, $S_{8,27}=U\oplus 3A_2$,
$S_{8,256}=\langle -2 \rangle\oplus 7A_1$.

\medskip

\noindent
Rank 9: $S_{9,2}=U\oplus E_7$, $S_{9,4}=U\oplus D_7$, $S_{9,8}=U\oplus A_7$,
$S_{9,512}=\langle -2\rangle\oplus 8A_1$.

\medskip

\noindent
Rank 10: $S_{10,1}=U\oplus E_8$, $S_{10,4}=U\oplus D_8$,
$S_{10,16}=U\oplus 2D_4$,
$S_{10,64}=U(2)\oplus 2D_4$.

\medskip

\noindent
Rank 18: $S_{18,1}=U\oplus 2E_8$.
\end{theorem}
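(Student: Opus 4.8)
The plan is to reduce Theorem \ref{th:el2refW} to a finite, lattice-by-lattice verification, since the complete list of elliptically $2$-reflective even hyperbolic lattices of $\rk S\ge 3$ is already known and finite: the $26$ lattices of rank $3$ from \cite{Nik5}, those of rank $4$ from \cite{Vin4}, and those of rank $\ge 5$ from \cite{Nik2}, all reproduced in \S\ref{subsec:clas.ell2-refl}. For each such $S$ one needs its fundamental chamber $\M^{(2)}(S)$ together with the set $P(\M^{(2)}(S))$ of outward simple $2$-roots; these are precisely the data produced by Vinberg's algorithm and are available (or readily recomputed) from \cite{Nik2}, \cite{Vin4}, \cite{Nik5}. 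The theorem then amounts to deciding, for each $S$, whether the system \eqref{latWfor2} is solvable, and collecting the lattices for which it is.

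First I would exploit that solvability is a purely linear condition. As recorded just before the statement, finiteness of the volume of $\M^{(2)}(S)$ forces $P(\M^{(2)}(S))$ to span $S\otimes\bq$, so a Weyl vector, if it exists, is \emph{unique}. Hence I may select $\rk S$ linearly independent roots $\delta_1,\dots,\delta_{\rk S}\in P(\M^{(2)}(S))$, solve the square system $(\rho,\delta_i)=-1$ for the candidate $\rho\in S\otimes\bq$ (nondegeneracy of the pairing guarantees a unique solution), and then test whether $(\rho,\delta)=-1$ holds for \emph{every} remaining $\delta\in P(\M^{(2)}(S))$. Equivalently, a Weyl vector exists if and only if every $\bq$-linear relation $\sum_\delta c_\delta\,\delta=0$ among the simple roots satisfies $\sum_\delta c_\delta=0$. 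Geometrically, using the distance formula $\ch\rho(\br_{++}x,\br_{++}y)=-(x,y)/\sqrt{x^2y^2}$, the equation $(\rho,\delta)=-1$ with $\delta^2=2$ says that the hyperbolic distance from $\br_{++}\rho$ to each wall $\Hh_\delta$ is the same, so $\rho$ is the incenter of $\M^{(2)}(S)$, which exists precisely when all walls are tangent to a single inscribed sphere.

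This reformulation immediately disposes of every case in which $\M^{(2)}(S)$ is a simplex: then $|P(\M^{(2)}(S))|=\rk S$, the candidate $\rho$ is forced and automatically consistent, it lies in the interior, and so $\rho^2<0$. The remaining, non-simplicial chambers are where the relation $\sum_\delta c_\delta=0$ must genuinely be checked. I would organize these by the uniform families in the list: for the lattices $U(m)\oplus Q$ and $\langle 2d\rangle\oplus Q$ with $Q$ a direct sum of $A$, $D$, $E$ root lattices, the Dynkin diagram of $P(\M^{(2)}(S))$ is often the affine diagram of $Q$ with one or two extra hyperbolic nodes attached, which makes the relations among simple roots and hence the consistency test nearly combinatorial. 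The $26$ rank-$3$ lattices are polygons in the hyperbolic plane and are checked individually; along the way one meets the duplication noted in \S\ref{subsec:clas.ell2-refl}, where $S'_{6,1,2}$ and $S_{6,1,1}$ of \cite{Nik5} turn out isomorphic, so that exactly $15$ of the rank-$3$ lattices survive. Carrying the computation through all ranks produces the stated list and the count of $59$ lattices.

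The hard part is not the linear algebra but assembling correct and complete facet data $P(\M^{(2)}(S))$ for every lattice, especially in ranks $3$ and $4$, where the lattices are most numerous and the fundamental polytopes least symmetric, and in a few higher-rank cases where one must rerun Vinberg's algorithm to confirm the walls. Any error in a single chamber---a missed wall or a misrecorded root---can flip the consistency verdict, so the real labor is the careful, exhaustive bookkeeping over all cases, including the verification of $\rho^2<0$ and of interiority of $\br_{++}\rho$ in the non-simplicial cases. Once each chamber is certified, recording the resulting $\rho$ and the generalized Cartan matrix \eqref{genCartmatr} for the surviving lattices completes the proof.
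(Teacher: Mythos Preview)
Your plan is correct and matches the paper's own proof essentially step for step: the paper reduces to the finite classification in \S\ref{subsec:clas.ell2-refl}, computes (or cites) $P(\M^{(2)}(S))$ for each lattice via Vinberg's algorithm, and then solves the linear system \eqref{latWfor2} for a candidate $\rho$ from a spanning subset of simple roots, checking consistency on the rest. Your organizing observations---that simplicial chambers are automatic, that the $U\oplus K$ cases are governed by the ``star'' of extended Dynkin diagrams, and that the real work is assembling correct facet data in the non-simplicial rank-$3$ and rank-$4$ cases---are exactly how the paper structures \S\ref{subsec:grmatel2refW}.
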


We shall discuss the proof of Theorem \ref{th:el2refW} in the next section.

\medskip

\subsection{The fundamental chambers $\M^{(2)}$ and the lattice Weyl vectors
for lattices of Theorem \ref{th:el2refW}}
\label{subsec:grmatel2refW}

Below, for lattices of Theorem  \ref{th:el2refW}, we describe the sets
$P(\M^{(2)})$ and the Weyl vectors. This describes Gram graphs
$\Gamma(P(\M^{(2)}))$ too.

We recall that for $P(\M^{(2)})$
one connects $\delta_1, \delta_2\in P(\M^{(2)})$ by the edge if
$\delta_1\cdot \delta_2<0$. This edge a thin, thick, and broken of the
weight $-\delta_1\cdot\delta_2$ if $\delta_1\cdot \delta_2=-1$,
$\delta_1\cdot\delta_2=-2$, and $\delta_1\cdot\delta_2<-2$ respectively.

More generally, for the set $P(\M)$ of perpendicular roots to a
fundamental chamber $\M$ of a hyperbolic reflection group,
one adds weights $\delta^2$ to vertices corresponding to
$\delta\in P(\M)$ with $\delta^2\not=2$ (we draw them black
and don't put the weight if $\delta^2=4$). The edge for different
$\delta_1,\delta_2\in P(\M)$ is thin of the natural weight
$n\ge 3$ (equivalently, the $n-2$-multiple thin edge for small $n$),
thick, and broken of the weight
$-2\,\delta_1\cdot \delta_2/\sqrt{\delta_1\cdot \delta_2}$
if $2\,\delta_1\cdot \delta_2/\sqrt{\delta_1\cdot\delta_2}=-2\,\cos{(\pi/n)}$,
$2\,\delta_1\cdot \delta_2/\sqrt{\delta_1\cdot\delta_2}=-2$, and
$2\,\delta_1\cdot \delta_2/\sqrt{\delta_1\cdot\delta_2}<-2$ respectively.

\medskip

We recall that a lattice $M$ is 2-elementary if its discriminant group
$M^\ast/M$ is 2-elementary, that is $M^\ast/M\cong (\bz/2\bz)^a$.

\medskip

\noindent
{\it Cases $S=U\oplus K$ where $K=\oplus_i^n{K_i}$ is the orthogonal
sum of 2-roots lattices $A_n$, $D_n$, $E_n$.}

Then $P(\M^{(2)})$ consists of $e=-c_1+c_2$, bases of root lattices $K_i$,
$c_1-w_i$ where $w_i$ are the maximal roots of $K_i$ corresponding to
the standard bases of $K_i$, $i=1,\dots,n$. The corresponding graph
\begin{equation}
\Gamma=\Gamma(P(\M^{(2)}))=St(\Gamma(\widetilde{K_1}),\dots,
\Gamma(\widetilde{K_n}))
\label{star}
\end{equation}
is called {\it the Star of the corresponding
extended Dynkin diagrams.}
Here $e=-c_1+c_2$ is the center
of the Star. The graph $\Gamma-\{e\}$ consists of $n$ connected components
$\Gamma(\widetilde{K_i})$ with the bases which are
the bases of $K_i$ and $c_1-w_i$. They give the corresponding
extended Dynkin diagrams
$\widetilde{{\mathbb A}_n}$, $\widetilde{{\mathbb D}_n}$, and
$\widetilde{{\mathbb E}_n}$. Obviously, $e$ is connected (by the thin edge)
with
$c_1-w_i$, $i=1,\dots, n$, only. See \cite{Nik2} for details.

Using this description, for all these cases of Theorem \ref{th:el2refW},
one can calculate the Weyl vector $\rho$ directly using \eqref{latWfor2}, and
prove that it does exist. For example, in Figure \ref{ua2a2}, we draw
the graph for the lattice $U\oplus 2A_2$. The rational weights for its
vertices show the linear combination of elements of $P(\M^{(2)})$
which gives the lattice Weyl vector $\rho$.
If $n=1$ (this is valid for the most cases),
then $P(\M^{(2)})$ gives the basis of the lattice $S$, and then $\rho$
exists obviously.

\begin{figure}
\begin{center}
\includegraphics[width=8cm]{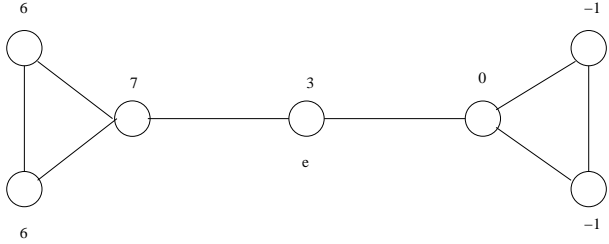}
\end{center}
\caption{The graph $\Gamma(P(\M^{(2)}))$ for $U\oplus A_2\oplus A_2$
is $St(\widetilde{\mathbb A_2},\widetilde{\mathbb A_2})$.}
\label{ua2a2}
\end{figure}

For all remaining similar cases of elliptically 2-reflective lattices
of Sect. \ref{subsec:clas.ell2-refl},
the star \eqref{star} gives a part of $\Gamma(P(\M^{(2)}))$
(for many cases these graphs coincide, for example if $S$
is not $2$-elementary). Calculation of the Weyl vector $\rho\in S\otimes \bq$
satisfying $\rho\cdot \delta=-1$ for all $\delta$ of the
star \eqref{star} show that it does not exist for all cases, except
$S=U\oplus nA_1$, $5\le n\le 8$. For these lattices, the full graph
$\Gamma(P(\M^{(2)}))$ is calculated in \cite[Table 1]{AN}, and
these calculations show that $\rho$ does not exist in these cases either.

\medskip

\noindent
{\it Cases $S=U(2)\oplus D_4$, $U(2)\oplus 2D_4$,
$\langle -2 \rangle\oplus nA_1$, $1\le n\le 8$.}

They give remaining $2$-elementary cases of Theorem \ref{th:el2refW}.
All these cases are classical. For example, one can find calculation
of the graphs $\Gamma(P(\M^{(2)}))$ in \cite[Table 1]{AN}.

We choose the standard basis $e_1,e_2,e_3,e_4$ for
$D_4$ such that $w=e_1+e_2+e_3+2e_4$ is the maximal root.

Let $S=U(2)\oplus D_4$ with the corresponding standard basis.
Then $P(\M^{(2)})$ consists of elements $e_1=(0,0,1,0,0,0)$,
$e_2=(0,0,0,1,0,0)$, \newline $e_3=(0,0,0,0,1,0)$, $e_4=(0,0,0,0,0,1)$,
$c_1-w=(1,0,-1,-1,-1,-2)$, $c_2-w=(0,1,-1,-1,-1,-2)$.
The Weyl vector $\rho=(3,3,-3,-3,-3,-5)$. See $\Gamma(P(\M^{(2)}))$
in Figure \ref{u2d4}.

\begin{figure}
\begin{center}
\includegraphics[width=6cm]{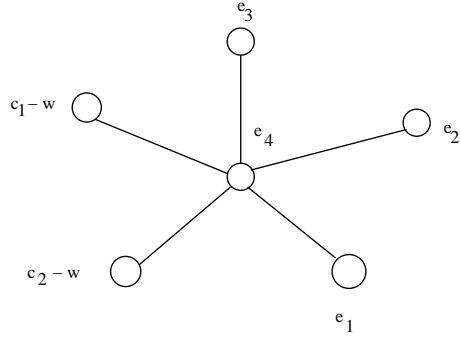}
\end{center}
\caption{The graph $\Gamma(P(\M^{(2)}))$ for $U(2)\oplus D_4$.}
\label{u2d4}
\end{figure}

Let $S=U(2)\oplus 2D_4$. The same lattice can be written in the
form $S=U\oplus \left(8A_1[1/2,1/2,1/2,1/2,1/2,1/2,1/2,1/2]\right)$.
We use the standard basis $c_1,c_2,e_1,\dots,e_8$ for $U\oplus 8A_1$.
The set  $P(\M^{(2)})$ consists of $e_1,\dots, e_8$,
$e_1^\prime =c_1-e_1,\,e_2^\prime=c_1-e_2, \dots,\,e_8^\prime=c_1-e_8$,
$f=-c_1+c_2$ and $f^\prime =c_1+c_2-(e_1+e_2+\cdots +e_8)/2$.
The Weyl vector $\rho=3c_1+2c_2-(e_1+e_2+\cdots +e_8)/2$.
See $\Gamma(P(\M^{(2)}))$ in Figure \ref{u22D4}.

\begin{figure}
\begin{center}
\includegraphics[width=8cm]{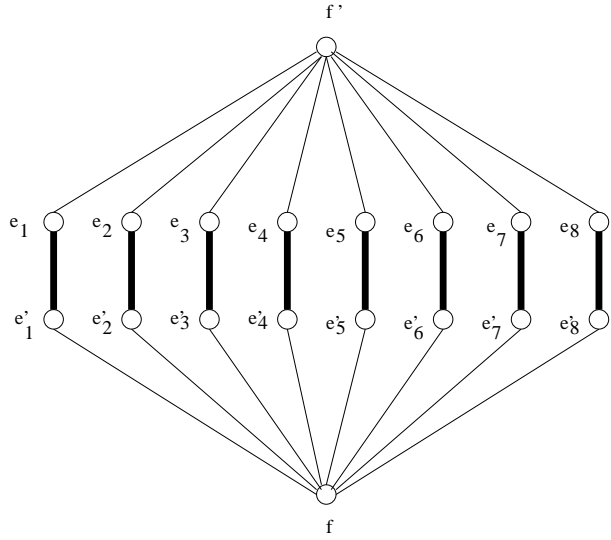}
\end{center}
\caption{The graph $\Gamma(P(\M^{(2)}))$ for $U(2)\oplus 2D_4$.}
\label{u22D4}
\end{figure}

Let $S=\langle -2 \rangle\oplus nA_1$, $2\le n\le 8$. Then calculation of
$P(\M^{(2)})$ is equivalent to the calculation of classes of exceptional
curves in Picard lattices of the rank $n+1$ for non-singular Del Pezzo surfaces.
Then the Weyl vector $\rho$ is equivalent to the anti-canonical class.

For $S=\langle -2 \rangle \oplus nA_1$, $2\le n\le 8$, with the standard basis
$h,e_1,\dots,e_n$,
the Weyl vector $\rho=3h-e_1-e_2-\cdots -e_n$, and
\begin{equation}
P(\M^{(2)})=\{\delta\in S\ |\ \delta^2=2\ \& \ \delta\cdot \rho=-1\}\,.
\label{DelPezzo}
\end{equation}
Then $P(\M^{(2)})$ consists of all elements below which one
can get by all permutations of $e_1,\dots, e_n$. They are
$e_1$, $h-e_1-e_2$ for $n\ge 2$; $2h-e_1-e_2-e_3-e_4-e_5$ for $n\ge 5$;
$3h-2e_1-e_2-e_3-e_4-e_5-e_6-e_7$ for $n\ge 7$;
$4h-2e_1-2e_2-2e_3-e_4-e_5-e_6-e_7-e_8$,
$5h-2e_1-2e_2-2e_3-2e_4-2e_5-2e_6-e_7-e_8$, $6h-3e_1-2e_2-\cdots -2e_8$
for $n=8$. For example, see \cite[Ch. 4, Sect. 4.2]{Man}. Thus,
$P(\M^{(2)})$ consists of $240$ elements for $n=8$; $56$ elements for
$n=7$; $27$ elements for $n=6$; $16$ elements for $n=5$; $10$ elements for
$n=4$; $6$ elements for $n=3$; $3$ elements for $n=2$; $1$ element for
$n=1$.   It is hard
to draw the corresponding graphs for big $n$. For $n=2$ and $n=3$ we draw
them in Figures \ref{del3} and \ref{del4}.

\begin{figure}
\begin{center}
\includegraphics[width=4cm]{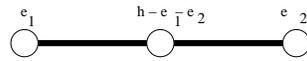}
\end{center}
\caption{The graph $\Gamma(P(\M^{(2)}))$ for $\langle -2 \rangle \oplus 2A_1$.}
\label{del3}
\end{figure}

\medskip

\begin{figure}
\begin{center}
\includegraphics[width=3cm]{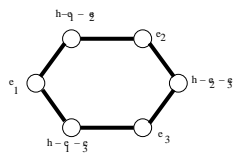}
\end{center}
\caption{The graph $\Gamma(P(\M^{(2)}))$ for $\langle -2 \rangle \oplus 3A_1$.}
\label{del4}
\end{figure}

This proves Theorem \ref{th:el2refW} for $\rk S\ge 7$.
Below we consider remaining cases.

\medskip

\noindent
{\it Remaining cases of $\rk S=6$.}

\medskip

Let $S=S_{6,64,b}=U(4)\oplus D_4$. See \cite[Sec. 6.4]{Nik2}.
Let $c_1,c_2,e_1,e_2,e_3,e_4$ be
its standard basis where $w=e_1+e_2+e_3+2e_4$ is the maximal root of $D_4$.
Then  $P(\M^{(2)})$ consists of $e_1,\dots,e_4$, $e_0=c_1-w$,
$e_0^\prime=c_2-w$, and
$e_i^\prime=c_1+c_2-2e_1-2e_2-2e_3-4e_4-e_i$, $i=1,2,3$.
The Weyl vector is
$\rho=(3c_1+3c_2)/2-3e_1-3e_2-3e_3-5e_4$. See $\Gamma(P(\M^{(2)}))$
in Figure \ref{u4d4}.

\begin{figure}
\begin{center}
\includegraphics[width=6cm]{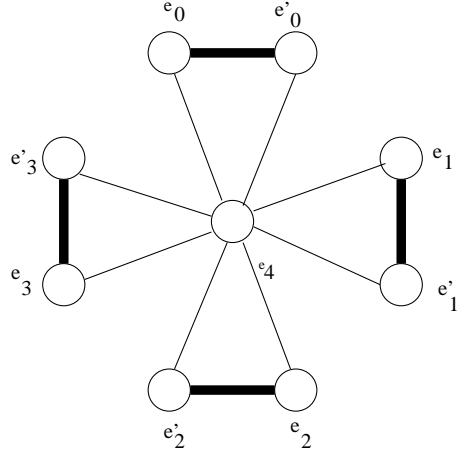}
\end{center}
\caption{The graph $\Gamma(P(\M^{(2)}))$ for $U(4) \oplus D_4$.}
\label{u4d4}
\end{figure}

Let $S=S_{6,81}=U(3)\oplus 2A_2$. Let $c_1,c_2,e_1,e_2,e_3,e_4$ be
its standard basis. The set $P(\M^{(2)})$ consists of $e_i$, $i=1,2,3,4$;
$c_1-e_1-e_2$, $c_1-e_3-e_4$, $c_2-e_1-e_2$, $c_2-e_3-e_4$;
$c_1+c_2-e_1-e_2-e_3-e_4-e_i$, $i=1,2,3,4$.  The Weyl vector
$\rho=c_1+c_2-e_1-e_2-e_3-e_4$. The $P(\M^{(2)})$  has the Gram matrix
\begin{equation}
-\left(\begin{array}{rrrrrrrrrrrr}
-2 & 1 & 0 & 0 & 1 & 0 & 1 & 0 & 3 & 0 & 1 & 1 \\
 1 & -2& 0 & 0 & 1 & 0 & 1 & 0 & 0 & 3 & 1 & 1 \\
 0 & 0 & -2& 1 & 0 & 1 & 0 & 1 & 1 & 1 & 3 & 0 \\
 0 & 0 & 1 & -2& 0 & 1 & 0 & 1 & 1 & 1 & 0 & 3 \\
 1 & 1 & 0 & 0 & -2& 0 & 1 & 3 & 0 & 0 & 1 & 1 \\
 0 & 0 & 1 & 1 & 0 & -2& 3 & 1 & 1 & 1 & 0 & 0 \\
 1 & 1 & 0 & 0 & 1 & 3 & -2& 0 & 0 & 0 & 1 & 1 \\
 0 & 0 & 1 & 1 & 3 & 1 & 0 & -2& 1 & 1 & 0 & 0 \\
 3 & 0 & 1 & 1 & 0 & 1 & 0 & 1 & -2& 1 & 0 & 0 \\
 0 & 3 & 1 & 1 & 0 & 1 & 0 & 1 & 1 & -2& 0 & 0 \\
 1 & 1 & 3 & 0 & 1 & 0 & 1 & 0 & 0 & 0 & -2& 1 \\
 1 & 1 & 0 & 3 & 1 & 0 & 1 & 0 & 0 & 0 & 1 &-2
\label{gram6,64,b}
\end{array}\right)
\end{equation}
which is very regular.

Thus, we have considered all lattices of the rank 6 of Sect.
\ref{subsec:clas.ell2-refl},
only the lattices of Theorem \ref{th:el2refW} have the lattice Weyl vectors.

\medskip

{\it Remaining cases of $\rk S=5$.}

\medskip

Let $S=S_{5,16}=\langle -4\rangle \oplus D_4$. See \cite[Sec. 8.5]{Nik2}.
For the standard basis
$h,e_1,e_2,e_3,e_4$ (for $D_4$, we use the same as above), we get that
$P(\M^{(2)})$ consists of $e_i$, $i=1,2,3,4$, and
$e_4^\prime=h-2e_1-2e_2-2e_3-3e_4$. The lattice Weyl vector is
$\rho=(5/2)h-3e_1-3e_2-3e_3-5e_4$. See Figure \ref{min4d4} for the Gram graph.

\begin{figure}
\begin{center}
\includegraphics[width=5cm]{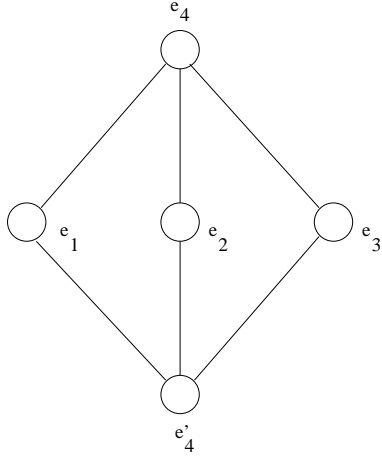}
\end{center}
\caption{The graph $\Gamma(P(\M^{(2)}))$ for $\langle -4 \rangle \oplus D_4$.}
\label{min4d4}
\end{figure}

Let $S=S_{5,32,b}=\langle -8\rangle\oplus D_4$. See \cite[Sec. 8.6]{Nik2}.
Then $P(\M^{(2)})$  consists of $e_i$, $i=1,2,3,4$, and
$f_i=h-2e_1-2e_2-2e_3-4e_4-e_i$, $i=1,2,3$.
The lattice Weyl vector is $\rho=(3/2)h-3e_1-3e_2-3e_3-5e_4$.
See Figure \ref{min8d4} for the Gram graph.

\begin{figure}
\begin{center}
\includegraphics[width=5cm]{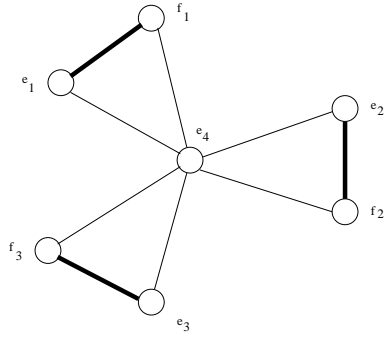}
\end{center}
\caption{The graph $\Gamma(P(\M^{(2)}))$ for $\langle -8 \rangle \oplus D_4$.}
\label{min8d4}
\end{figure}

Let $S=S_{5,64}=\langle -16 \rangle\oplus D_4$.
See \cite[Sec. 8.5]{Nik2}.
Then $P(\M^{(2)})$
consists of $e_i$, $i=1,2,3,4$;
$f_i=h-3e_1-3e_2-3e_3-5e_4-e_i$, $i=1,2,3$; $f_4=h-3e_1-3e_2-3e_3-6e_4$.
The lattice Weyl vector is $\rho=h-3e_1-3e_2-3e_3-5e_4$.
See Figure \ref{min16d4} for the Gram graph.

\begin{figure}
\begin{center}
\includegraphics[width=6cm]{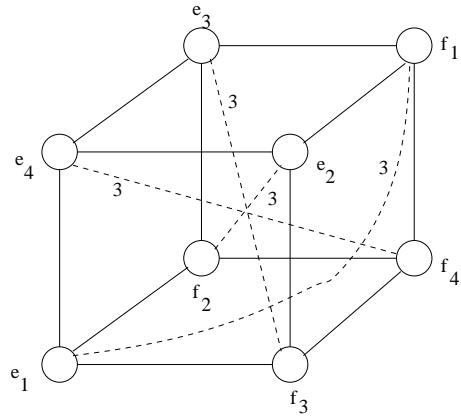}
\end{center}
\caption{The graph $\Gamma(P(\M^{(2)}))$ for $\langle -16 \rangle \oplus D_4$.}
\label{min16d4}
\end{figure}

\medskip

Let $S=S_{5,128}=U(4)\oplus 3A_1$. See details in
\cite[Sec. 8.3]{Nik2}.
For the standard basis $c_1,c_2,e_1,e_2,e_3$, the Weyl vector
$\rho=(c_1+c_2-e_1-e_2-e_3)/2$, and
$$
P(\M^{(2)})=\{\delta\in S\ |\ \delta^2=2\ \&\ \delta\cdot \rho=-1\}.
$$
The set
$P(\M^{(2)})$ consists of $e_i$, $c_1-e_i$, $c_2-e_i$, $i=1,2,3$;
$c_1+c_2-2e_i-e_j$, $1\le i\not=j\le 3$; $2c_1+c_2-2e_1-2e_2-2e_3+e_i$,
$c_1+2c_2-2e_1-2e_2-2e_3+e_i$,
$i=1,2,3$;  $2c_1+2c_2-2e_1-2e_2-2e_3-e_i$, $i=1,2,3$. It has 24 elements,
and its Gram graph is very symmetric.

\medskip

For the remaining lattice $S=\langle 6 \rangle\oplus 2A_2$ of rank 5 of Sect.
\ref{subsec:clas.ell2-refl}, the Gram graph $\Gamma(P(\M^{(2)}))$
is described in \cite[Sec. 8.6]{Nik2}, and it has no the lattice
Weyl vector.

\medskip

{\it Remaining cases of $\rk S=4$.} We use Vinberg's algorithm \cite{Vin2}
to calculate $P(\M^{(2)})$, and either to find the lattice Weyl vector or
to prove that it does not exist.

\medskip

Let $S=S_{4,12}=U(2)\oplus A_2$. For the standard basis $c_1$, $c_2$,
$e_1$, $e_2$, the set $P(\M^{(2)})$ consists of
$e_1$, $e_2$, $f_1=c_1-e_1-e_2$, $f_2=c_2-e_1-e_2$. The lattice
Weyl vector is $\rho=(3/2)(c_1+c_2)-e_1-e_2$. See
Figure \ref{u2a2} for the Gram graph.

\begin{figure}
\begin{center}
\includegraphics[width=4cm]{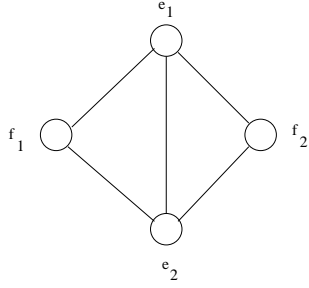}
\end{center}
\caption{The graph $\Gamma(P(\M^{(2)}))$ for $U(2)\oplus A_2$.}
\label{u2a2}
\end{figure}

Let $S=S_{4,16,b}=\langle -4 \rangle \oplus A_3$. For the
standard basis $h$, $e_1$, $e_2$, $e_3$, the set
$P(\M^{(2)})$ consists of $e_1$, $e_2$, $e_3$,
$f_1=h-2e_1-2e_2-e_3$, $f_3=h-e_1-2e_2-2e_3$. The lattice Weyl vector
$\rho=(3/2)h-(3/2)e_1-2e_2-(3/2)e_3$. See Figure \ref{min4a3} for
the Gram graph.

\begin{figure}
\begin{center}
\includegraphics[width=4cm]{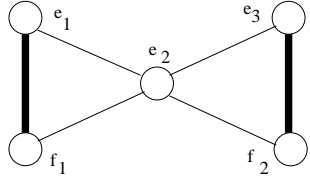}
\end{center}
\caption{The graph $\Gamma(P(\M^{(2)}))$ for $\langle -4 \rangle\oplus A_3$.}
\label{min4a3}
\end{figure}

Let $S=S_{4,27,a}=U(3)\oplus A_2$. For the standard basis $c_1$, $c_2$,
$e_1$, $e_2$, the set $P(\M^{(2)})$ consists of
$e_1$, $e_2$, $f_1=c_1-e_1-e_2$, $f_2=c_2-e_1-e_2$. The lattice
Weyl vector is $\rho=c_1+c_2-e_1-e_2$. See
Figure \ref{u3a2} for the Gram graph.

\begin{figure}
\begin{center}
\includegraphics[width=4cm]{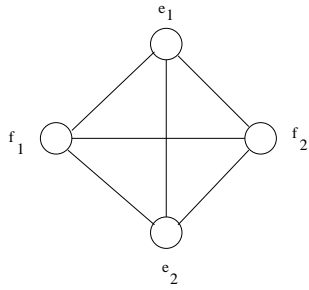}
\end{center}
\caption{The graph $\Gamma(P(\M^{(2)}))$ for $U(3)\oplus A_2$.}
\label{u3a2}
\end{figure}

Let
$S=S_{4,27,b}
=\left\langle\begin{array}{rr}
 0 & -3  \\
 -3 & 2
\end{array}\right\rangle \oplus A_2$.
For the standard basis $c$, $e_1$, $e_2$, $e_3$,
the set $P(\M^{(2)})$ consists of
$e_1$, $e_2$, $e_3$, $f_1=c-e_2-e_3$, $f_2=c+e_1-e_2-2e_3$,
$f_3=c+e_1-2e_2-e_3$. The lattice
Weyl vector is $\rho=c+e_1-e_2-e_3$. See
Figure \ref{s427b} for the Gram graph.

\begin{figure}
\begin{center}
\includegraphics[width=6cm]{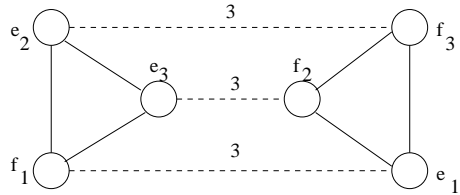}
\end{center}
\caption{The graph $\Gamma(P(\M^{(2)}))$ for
$S_{4,27,b}$.}
\label{s427b}
\end{figure}

\medskip

Let $S=S_{4,64,a}=U(4)\oplus 2A_1$.
For the standard basis $c_1$, $c_2$,
$e_1$, $e_2$, the set $P(\M^{(2)})$ consists of
$e_1$, $e_2$, $c_1-e_1$, $c_1-e_2$, $c_2-e_1$,
$c_2-e_2$, $c_1+c_2-2e_1-e_2$, $c_1+c_2-e_1-2e_2$.
The lattice
Weyl vector is $\rho=(c_1+c_2-e_1-e_2)/2$. The Gram matrix
$\Gamma(P(\M^{(2)}))$ is
\begin{equation}
-\left(\begin{array}{rrrrrrrr}
-2 & 0 & 2 & 0 & 2 & 0 & 4 & 2 \\
 0 & -2& 0 & 2 & 0 & 2 & 2 & 4 \\
 2 & 0 & -2& 0 & 2 & 4 & 0 & 2 \\
 0 & 2 & 0 & -2& 4 & 2 & 2 & 0 \\
 2 & 0 & 2 & 4 & -2& 0 & 0 & 2 \\
 0 & 2 & 4 & 2 & 0 & -2& 2 & 0 \\
 4 & 2 & 0 & 2 & 0 & 2 & -2& 0 \\
 2 & 4 & 2 & 0 & 2 & 0 & 0 & -2
\label{gram4,64,a}
\end{array}\right)
\end{equation}
which is very regular.

\medskip

Let $S=S_{4,64,b}=\langle -8 \rangle\oplus 3A_1$. For the
standard basis $h$, $e_1$, $e_2$, $e_3$, the set $P(\M^{(2)})$
consists of $e_1$, $e_2$, $e_3$, $h-e_1-2e_2$, $h-e_1-2e_3$,
$h-2e_1-e_2$, $h-e_2-2e_3$, $h-2e_1-e_3$, $h-2e_2-e_3$,
$2h-3e_1-2e_2-2e_3$, $2h-2e_1-3e_2-2e_3$, $2h-2e_1-2e_2-3e_3$.
The lattice Weyl vector is $\rho=(h-e_1-e_2-e_3)/2$. The Gram
matrix  $\Gamma(P(\M^{(2)}))$ is
\begin{equation}
-\left(\begin{array}{rrrrrrrrrrrr}
-2 & 0 & 0 & 2 & 2 & 4 & 0 & 4 & 0 & 6 & 4 & 4 \\
 0 & -2& 0 & 4 & 0 & 2 & 2 & 0 & 4 & 4 & 6 & 4 \\
 0 & 0 & -2& 0 & 4 & 0 & 4 & 2 & 2 & 4 & 4 & 6 \\
 2 & 4 & 0 & -2& 6 & 0 & 4 & 4 & 0 & 2 & 0 & 4 \\
 2 & 0 & 4 & 6 & -2& 4 & 0 & 0 & 4 & 2 & 4 & 0 \\
 4 & 2 & 0 & 0 & 4 & -2& 6 & 0 & 4 & 0 & 2 & 4 \\
 0 & 2 & 4 & 4 & 0 & 6 & -2& 4 & 0 & 4 & 2 & 0 \\
 4 & 0 & 2 & 4 & 0 & 0 & 4 & -2& 6 & 0 & 4 & 2 \\
 0 & 4 & 2 & 0 & 4 & 4 & 0 & 6 & -2& 4 & 0 & 2 \\
 6 & 4 & 4 & 2 & 2 & 0 & 4 & 0 & 4 & -2& 0 & 0 \\
 4 & 6 & 4 & 0 & 4 & 2 & 2 & 4 & 0 & 0 & -2& 0 \\
 4 & 4 & 6 & 4 & 0 & 4 & 0 & 2 & 2 & 0 & 0 &-2
\label{gram4,64,b}
\end{array}\right)
\end{equation}
which is very regular.

\medskip

Let $S=S_{4,108}=U(6)\oplus A_2$.
For the standard basis $c_1$, $c_2$,
$e_1$, $e_2$, the set $P(\M^{(2)})$ consists of
$e_1$, $e_2$, $f_3=c_1-e_1-e_2$, $f_4=c_2-e_1-e_2$,
$f_5=c_1+c_2-2e_1-3e_2$, $f_6=c_1+c_2-3e_1-2e_2$.
The lattice Weyl vector is $\rho=(c_1+c_2)/2-e_1-e_2$.
See Figure \ref{u6a2} for the Gram graph.

\begin{figure}
\begin{center}
\includegraphics[width=8cm]{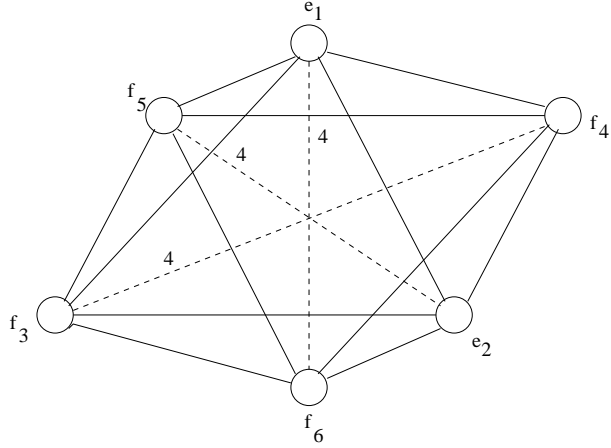}
\end{center}
\caption{The graph $\Gamma(P(\M^{(2)}))$ for
$U(6)\oplus A_2$.}
\label{u6a2}
\end{figure}

\medskip

Let $S=S_{4,28}=
\left\langle\begin{array}{rrrr}
 -2 & -1 & -1 & -1  \\
 -1  & 2 & 0 & 0 \\
 -1  & 0 & 2 & 0 \\
 -1  & 0 & 0 & 2
\end{array}\right\rangle$. For the standard basis $h$,
$e_1$, $e_2$, $e_3$, the set $P(\M^{(2)})$ consists of
$e_1$, $e_2$, $e_3$, $f_4=h-e_1$, $f_5=h-e_2$, $f_6=h-e_3$.
The lattice Weyl vector $\rho=h$. This case is anisotropic: the
polyhedron  $\M^{(2)}$ is compact, it has no vertices at infinity.
See Figure \ref{s4,28} for the Gram graph.

\begin{figure}
\begin{center}
\includegraphics[width=6cm]{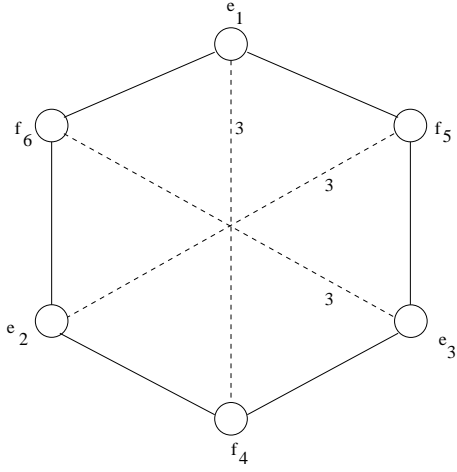}
\end{center}
\caption{The graph $\Gamma(P(\M^{(2)}))$ for
$S_{4,28}$.}
\label{s4,28}
\end{figure}

\medskip

Let us show that remaining three elliptically 2-reflective
hyperbolic lattices of rank 4 of Sec. \ref{subsec:clas.ell2-refl} don't
have a lattice Weyl vector.

\medskip

Let $S=U(3)\oplus 2A_1$. For the standard basis $c_1$, $c_2$, $e_1$, $e_2$,
the set $P(\M^{(2)})$ consists of
$e_1$, $e_2$, $f_3=c_1-e_1$, $f_4=c_1-e_2$, $f_5=c_2-e_1$, $f_6=c_2-e_2$,
$f_7=2c_1+2c_2-3e_1-2e_2$, $f_8=2c_1+2c_2-2e_1-3e_2$. These calculations
are important as itself.

Considering $\rho$ for first 6 these elements, one can see that
$\rho=-2c_1^\ast-2c_2^\ast-e_1^\ast-e_2^\ast$.
But, then $\rho\cdot f_7=-3$. Thus, $\rho$ does not exist.

\medskip

Let $S=\langle -4 \rangle\oplus \langle 4 \rangle \oplus A_2$.
For the standard basis $h$, $e$, $e_1$, $e_2$,
the set $P(\M^{(2)})$ consists of $e_1$, $e_2$,
$f_3=h-e-e_1-e_2$, $f_4=h+e-e_1-e_2$, $f_5=h-e_1-2e_2$,
$f_6=h-2e_1-e_2$. These calculations are important as itself.

Considering $\rho$ for first 4 these elements, one can see that
$\rho=(3/4)h-e_1-e_2$. But, then $\rho\cdot f_5=0$. Thus, $\rho$
does not exist.

\medskip

Let
$$
S=\left\langle\begin{array}{rrrr}
 -12 & -2 & 0 & 0   \\
  -2  & 2 & -1 & 0  \\
  0  & -1 & 2 & -1 \\
  0  & 0 & -1 & 2 \\
\end{array}\right\rangle\ .
$$
For the standard basis $h$, $e_1$, $e_2$, $e_3$, the set
$P(\M^{(2)})$ consists of $e_1$, $e_2$, $e_3$,
$f_4=h-2e_1-2e_2-e_3$, $f_5=h-2e_2-3e_3$, $f_6=h-e_1-3e_2-2e_3$,
$f_7=2h-2e_1-4e_2-5e_3$, $f_8=2h-3e_1-4e_2-4e_3$. This case is
anisotropic: the polyhedron  $\M^{(2)}$ is compact, it has no
vertices at infinity. These calculations
are important as itself.

Considering $\rho$ for first 4 these elements, one can see that
$\rho=(3h-3e_1-7e_2-6e_3)/5$. But, then $\rho\cdot f_6=0$. Thus, $\rho$
does not exist.

\medskip

{\it Remaining cases of $\rk S=3$.}

Firstly, let us consider isotropic cases.

Let $M_0=U\oplus A_1$
(the lattice $M_0=S_{3,2}$ in notations of Theorem \ref{th:el2refW}).
For the standard basis $c_1$, $c_2$ for $U$, and $b$ for $A_1$,
the set $P(\M^{(2)})$ is $a=-c_1+c_2$, $b$, $c=c_1-b$.
The lattice Weyl vector $\rho=3c_1+2c_2-b/2$ and $\rho^2=-23/2$.
The set  $P(\M^{(2)})$ has the Gram matrix $\Gamma(P(\M^{(2)}))$
equals to
\begin{equation}
A_{1,0}=
\left(\begin{array}{rrr}
2 & 0 & -1 \\
0 & 2 & -2 \\
-1&-2 & 2
\end{array}\right) \ ,
\label{A_{1,0}}
\end{equation}
and the Graph graph which is shown in Figure \ref{ua1}.

\begin{figure}
\begin{center}
\includegraphics[width=6cm]{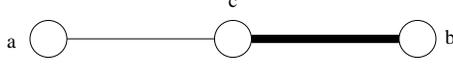}
\end{center}
\caption{The graph $\Gamma(P(\M^{(2)}))$ for
$U\oplus A_1$.}
\label{ua1}
\end{figure}

\medskip

All isotropic elliptically reflective hyperbolic lattices of rank 3
are sublattices of $M_0=U\oplus A_1$ of finite index
which are described in \cite{Nik5}. Let $[v_1,v_2,\dots,v_n]$ be a
sublattice generated by $v_1,\dots,v_n$. Let
$M_{k,l,m}=[ka,\ lb,$  $mc]\subset M_0$
where $k$, $l$, $m\in \bn$.

By \cite{Nik5}, up to the action of $W^{(2)}(M_0)=O^+(M_0)$,
all elliptically reflective sublattices
of $M_0$ are $M_{1,1,m}$,
$m=1$, $2$, $3$, $4$, $6$, $8$;
$M_{1,l,1}$, $l=2$, $3$, $4$, $5$, $6$, $9$;
$M_{k,1,1}$,   $k=4$, $5$, $6$, $7$, $8$, $10$, $12$;
$M_{2,1,2}$; $M_{4,1,2}$; $M_{6,1,2}$;
$M^\prime_{4,1,2}=[2a+c,b,2c]$; $M^\prime_{6,1,2}=[3a+c,b,2c]$
(24 sublattices). See \cite[Table 3]{Nik5}. For these sublattices,
only the following are isomorphic as lattices: $M_{4,1,1}\cong M_{2,1,2}$,
$M_{8,1,1}\cong M_{4,1,2}$, $M_{12,1,1}\cong M_{6,1,2}$,
$M_{6,1,1}\cong M^\prime_{6,1,2}$.
Thus, there are 20 isotropic non-isomorphic such lattices.
See \cite[Theorem 2.5]{Nik5}. The last isomorphism was missed in this
Theorem.
For all these 24 sublattices, the fundamental polygons $\M^{(2)}$ and
$P(\M^{(2)})$ are calculated in terms of $\Delta^{(2)}(M_0)$ in
\cite[Figures 5--10]{Nik5}. For $M^\prime_{6,1,2}=[3a+c,b,2c]$,
the correct polygon will be $PQRT_1$ in Figure 10 (see
\cite[Table 3]{Nik5}).

Using these results, one can find all these lattices which have
the lattice Weyl vector, and identify them with the isotropic lattices
of the rank three of Theorem \ref{th:el2refW}. Below, we do these
calculations.

The case $S=M_{1,1,1}=M_0=S_{3,2}$ was considered above.

Let $S=M_{1,1,2}$ (equals to $S_{3,8,a}=U(2)\oplus A_1$).
Then $P(\M^{(2)})$ is $b+2c$, $a$, $b$
with the Gram matrix
\begin{equation}
A_{2,0}=
\left(\begin{array}{rrrr}
 2 & -2 & -2 \\
-2 &  2 &  0 \\
-2 &  0 &  2
\end{array}\right)\ .
\label{A_{2,0}}
\end{equation}
The lattice Weyl vector is $\rho=a+(5/2)b+3c$ and $\rho^2=-7/2$.
This is equal to $S_{3,8,a}=U(2)\oplus A_1$. For its standard
basis $c_1$, $c_2$, $e$, the set $P(\M^{(2)})$ consists of
$e$, $c_1-e$, $c_2-e$ with the Gram matrix $A_{2,0}$, the
$\rho=c_1+c_2-e/2$ with $\rho^2=-7/2$. These lattices
are isomorphic since they have equal determinants, and their matrices
in their generators above are the same: $A_{2,0}$.

Let $S=M_{1,1,3}$. Then $P(\M^{(2)})$ is $a$, $b$, $2b+3c$,
$2a+3b+6c$. There is no the lattice Weyl vector.

Let $S=M_{1,1,4}$ (equals to
$S_{3,32,b}=\langle-8\rangle \oplus 2A_1$).
Then $P(\M^{(2)})$ is $b$, $3b+4c$, $a+2b+4c$, $a$.
with the Gram matrix
\begin{equation}
A_{2,I}=
\left(\begin{array}{rrrr}
 2 & -2 & -4 &  0 \\
-2 &  2 &  0 & -4 \\
-4 &  0 &  2 & -2 \\
 0 & -4 & -2 &  2
\end{array}\right)\ .
\label{A_{2,I}}
\end{equation}
The lattice Weyl vector is $\rho=(1/2)a+(3/2)b+2c$ and $\rho^2=-1$.
This is equal to $S_{3,32,b}=\langle-8\rangle \oplus 2A_1$. For its
standard basis $h$, $e_1$, $e_2$,
the set $P(\M^{(2)})$ consists of
$e_1$, $h-e_1-2e_2$, $h-2e_1-e_2$, $e_2$
with the Gram matrix $A_{2,I}$, the $\rho=(h-e_1-e_2)/2$ with
$\rho^2=-1$.

Let $S=M_{1,1,6}$. Then $P(\M^{(2)})$ is $a$, $b$, $5b+6c$,
$3a+16b+24c$, $4a+15b+24c$, $2a+3b+6c$.
There is no the lattice Weyl vector.

Let $S=M_{1,1,8}$ (equals to
$S_{3,128,b}=\langle-32 \rangle \oplus 2A_1$).
Then $P(\M^{(2)})$ is $a$, $3a+4b+8c$,
$4a+9b+16c$, $4a+15b+24c$, $3a+16b+24c$, $a+12b+16c$, $7b+8c$, $b$
with the Gram matrix
\begin{equation}
A_{2,III}=
\left(\begin{array}{rrrrrrrr}
 2  & -2 & -8  & -16 & -18 & -14 & -8  &   0 \\
-2  &  2 &  0  &  -8 & -14 & -18 & -16 &  -8 \\
-8  &  0 &  2  &  -2 &  -8 & -16 & -18 & -14 \\
-16 & -8 & -2  &   2 &   0 & -8  & -14 & -18 \\
-18 & -14& -8  &   0 &   2 & -2  & -8  &  -16 \\
-14 & -18& -16 &  -8 &  -2 &  2  & 0   &  -8 \\
 -8 & -16& -18 & -14 &  -8 &  0  & 2   &  -2 \\
  0 & -8 & -14 & -18 & -16 & -8  & -2  &   2 \\
\end{array}\right)\ .
\label{A_{2,III}}
\end{equation}
The lattice Weyl vector is $\rho=(1/4)a+b+(3/2)c$ with $\rho^2=-1/8$.
This is equal to $S_{3,128,b}=\langle-32\rangle \oplus 2A_1$. For its
standard basis $h$, $e_1$, $e_2$,
the set $P(\M^{(2)})$ consists of
$e_1$, $h-e_1-4e_2$, $2h-4e_1-7e_2$, $3h-8e_1-9e_2$, $3h-9e_1-8e_2$,
$2h-7e_1-4e_2$, $h-4e_1-e_2$, $e_2$
with the Gram matrix $A_{2,III}$, the $\rho=(3/16)h-e_1/2-e_2/2$ with
$\rho^2=-1/8$.

Let $S=M_{1,2,1}$ (equals to
$S_{3,8,b}=(\langle -24 \rangle\oplus A_2)[1/3,-1/3,1/3]$).
Then $P(\M^{(2)})$ is $c$, $2b+c$, $a$
with the Gram matrix
\begin{equation}
A_{1,I}=
\left(\begin{array}{rrr}
 2 & -2 & -1 \\
-2 &  2 & -1 \\
-1 & -1 &  2
\end{array}\right)\ .
\label{A_{1,I}}
\end{equation}
The lattice Weyl vector is $\rho=a+3b+3c$ with $\rho^2=-4$.
This is equal to
$S_{3,8,b}=(\langle -24 \rangle\oplus A_2)[1/3,-1/3,1/3]$.
For the standard basis $h$, $e_1$, $e_2$ of
$\langle -24 \rangle \oplus A_2$,
the set $P(\M^{(2)})$ is
$e_2$, $(h-4e_1-5e_2)/3$, $e_1$ with the Gram matrix $A_{1,I}$, the
$\rho=h/2-e_1-e_2$ with $\rho^2=-4$.

Let $S=M_{1,3,1}$ (equals to $S_{3,18}=U(3)\oplus A_1$).
Then $P(\M^{(2)})$ is $3b+2c$, $a$, $c$
with the Gram matrix
\begin{equation}
A_{3,0}=
\left(\begin{array}{rrr}
2  & -2 & -2 \\
-2 &  2 & -1 \\
-2 & -1 &  2
\end{array}\right)\ .
\label{A_{3,0}}
\end{equation}
The lattice Weyl vector is $\rho=(2/3)a+(5/2)b+(7/3)c$ with $\rho^2=-13/6$.
This is equal to $S_{3,18}=U(3)\oplus A_1$. For its standard
basis $c_1$, $c_2$, $e$, the set $P(\M^{(2)})$ is
$e$, $c_1-e$, $c_2-e$ with the Gram matrix $A_{3,0}$, the
$\rho=(2/3)c_1+(2/3)c_2-e/2$ with $\rho^2=-13/6$.

Let $S=M_{1,4,1}$. Then $P(\M^{(2)})$ is $c$, $a$, $3a+12b+8c$,
$4b+3c$. There is no the lattice Weyl vector.

Let $S=M_{1,5,1}$. Then $P(\M^{(2)})$ is $-5b-4c$, $5a+40b+29c$,
$16a+150b+111c$,  $4a+45b+34c$, $a+20b+16c$, $10b+9c$.
There is no the lattice Weyl vector.

Let $S=M_{1,6,1}$ (equals to $S_{3,72}=\langle -24 \rangle\oplus A_2$).
Then $P(\M^{(2)})$ is $a$, $a+6b+4c$, $6b+5c$,
$c$ with the Gram matrix
\begin{equation}
A_{3,I}=
\left(\begin{array}{rrrr}
 2 & -2 & -5 & -1 \\
-2 &  2 & -1 & -5 \\
-5 & -1 &  2 & -2 \\
-1 & -5 & -2 &  2
\end{array}\right)\ .
\label{A_{3,I}}
\end{equation}
The lattice Weyl vector is $\rho=(1/3)a+2b+(5/3)c$ with $\rho^2=-2/3$.
This is equal to
$S_{3,72}=\langle -24 \rangle\oplus A_2$.
For its standard basis $h$, $e_1$, $e_2$,
the set $P(\M^{(2)})$ is
$e_1$, $h-3e_1-4e_2$, $h-4e_1-3e_2$, $e_2$ with
the Gram matrix $A_{3,I}$, the  $\rho=h/3-e_1-e_2$ with $\rho^2=-2/3$.

Let $S=M_{1,9,1}$.
Then $P(\M^{(2)})$ is $c$, $a$, $2a+9b+6c$, $7a+54b+39c$,
$8a+72b+53c$,  $4a+45b+34c$, $5a+72b+56c$, $3a+54b+43c$, $9b+8c$.
There is no the lattice Weyl vector.

Let $S=M_{4,1,1}$ (equals to $S_{3,32,a}=U(4)\oplus A_1$).
Then $P(\M^{(2)})$ consists of $b$, $c$, $4a+3b+4c$
with the Gram matrix
\begin{equation}
A_{1,II}=
\left(\begin{array}{rrr}
 2 & -2 & -2 \\
-2 &  2 & -2 \\
-2 & -2 &  2
\end{array}\right)\ .
\label{A_{1,II}}
\end{equation}
The lattice Weyl vector is $\rho=2a+2b+(5/2)c$ with $\rho^2=-3/2$.
This is equal to $S_{3,32,a}=U(4)\oplus A_1$. For its standard
basis $c_1$, $c_2$, $e$, the set $P(\M^{(2)})$ is
$e$, $c_1-e$, $c_2-e$ with the Gram matrix $A_{1,II}$, the
$\rho=(c_1+c_2-e)/2$ with $\rho^2=-3/2$.

Let $S=M_{5,1,1}$.
Then $P(\M^{(2)})$ is $b$, $c$, $20a+15b+24c$, $5a+4b+5c$.
There is no the lattice Weyl vector.

Let $S=M_{6,1,1}$.
Then $P(\M^{(2)})$ is $b$, $c$, $12a+9b+14c$, $6a+5b+6c$.
There is no the lattice Weyl vector.

Let $S=M_{8,1,1}$ (equals to $S_{3,128,a}=U(8)\oplus A_1$).
Then $P(\M^{(2)})$ is $b$, $c$, $8a+6b+9c$, $8a+7b+8c$ with
the Gram matrix
\begin{equation}
A_{2,II}=
\left(\begin{array}{rrrr}
 2 & -2 & -6 & -2 \\
-2 &  2 & -2 & -6 \\
-6 & -2 &  2 & -2 \\
-2 & -6 & -2 &  2
\end{array}\right)\ .
\label{A_{2,II}}
\end{equation}
The lattice Weyl vector is $\rho=2a+(7/4)b+(9/4)c$ with $\rho^2=-1/2$.
This is equal to $S_{3,128,a}=U(8)\oplus A_1$. For its standard
basis $c_1$, $c_2$, $e$, the set $P(\M^{(2)})$ is
$e$, $c_1-e$, $c_1+c_2-3e$, $c_2-e$ with the Gram matrix $A_{2,II}$, the
$\rho=(c_1+c_2)/4-e/2$ with $\rho^2=-1/2$.

Let $S=M_{10,1,1}$.
Then $P(\M^{(2)})$ is $b$, $c$, $20a+15b+24c$, $30a+23b+34c$,
$60a+47b+66c$, $40a+32b+43c$, $40a+33b+42c$, $10a+9b+10c$.
There is no the lattice Weyl vector.

Let $S=M_{12,1,1}$ (equals to $S_{3,288}=U(12)\oplus A_1$).
Then $P(\M^{(2)})$ is $b$, $c$, $12a+9b+14c$, $24a+19b+26c$,
$24a+20b+25c$, $12a+11b+12c$
 with
the Gram matrix
\begin{equation}
A_{3,II}=
\left(\begin{array}{rrrrrr}
 2  & -2  & -10 & -14 & -10 &  -2 \\
-2  &  2  & -2  & -10 & -14 & -10 \\
-10 & -2  &  2 &   -2 & -10 & -14 \\
-14 & -10 & -2 &   2  &  -2 & -10 \\
-10 & -14 & -10&  -2  &   2 & -2  \\
-2  & -10 & -14&  -10 &  -2 &  2
\end{array}\right).
\label{A_{3,II}}
\end{equation}
The lattice Weyl vector is $\rho=2a+(5/3)b+(13/6)c$ with $\rho^2=-1/6$.
This is equal to $S_{3,228}=U(12)\oplus A_1$. For its standard
basis $c_1$, $c_2$, $e$, the set $P(\M^{(2)})$ is
$e$, $c_2-e$, $c_1+2c_2-5e$, $2c_1+2c_2-7e$,
$2c_1+c_2-5e$, $c_1-e$ with the Gram matrix $A_{3,II}$, the
$\rho=(c_1+c_2)/6-e/2$ with $\rho^2=-1/6$.

Let $S=M_{2,1,2}$.
Then $P(\M^{(2)})$ is $b$, $b+2c$, $2a+b+2c$
with the Gram matrix $A_{1,II}$. The lattice Weyl vector is
$\rho=a+(3/2)b+2c$ with $\rho^2=-1/6$. This case is isomorphic to
$M_{4,1,1}$ above.

Let $S=M_{4,1,2}$.
Then $P(\M^{(2)})$ is $b$, $b+2c$, $4a+3b+6c$, $4a+3b+4c$
with the Gram matrix $A_{2,II}$. The lattice Weyl vector is
$\rho=a+b+(3/2)c$ with $\rho^2=-1/2$. This case is isomorphic to
$M_{8,1,1}$ above.

Let $S=M_{6,1,2}$.
Then $P(\M^{(2)})$ is $b$, $b+2c$, $6a+5b+10c$, $12a+9b+16c$,
$12a+9b+14c$, $6a+5b+6c$ with the Gram matrix $A_{3,II}$.
The lattice Weyl vector is $\rho=a+(5/6)b+(4/3)c$ with $\rho^2=-1/6$.
This case is isomorphic to $M_{12,1,1}$ above.

Let $S=M^\prime_{4,1,2}=[2a+c,b,2c]$ (equals to $S_{3.32,c}=
U(8)[1/2,1/2]\oplus A_1$).
Then $P(\M^{(2)})$ is $b$, $b+2c$, $4a+3b+6c$, $4a+3b+4c$
with the Gram matrix $A_{2,II}$ (see \eqref{A_{2,II}}).
The lattice Weyl vector is $\rho=a+b+(3/2)c$ with $\rho^2=-1/2$.
These are the same as for
$M_{4,1,2}$, but $M_{4,1,2}\subset M^\prime_{4,1,2}$ is only a
sublattice of the index two. The lattice
$M^\prime_{4,1,2}$ is not generated
by its elements with square $2$.
This is equal to $S_{3,32,c}=U(8)[1/2,1/2]\oplus A_1$. For the standard
basis $c_1$, $c_2$, $e$, of $U(8)\oplus A_1$,
the set $P(\M^{(2)})$ is  $e$, $c_1-e$, $c_1+c_2-3e$, $c_2-e$
with the Gram matrix $A_{2,II}$, the
$\rho=(c_1+c_2)/4-e/2$ with $\rho^2=-1/2$.

Let $S=M^\prime_{6,1,2}=[3a+c,b,2c]$.
Then $P(\M^{(2)})$ is $b$, $b+2c$, $6a+5b+10c$, $12a+9b+16c$,
$3a+b+3c$. Their Gram matrix is the same as for $P(\M^{2})$
of the lattice $M_{6,1,1}$ above. Thus, these lattices are isomorphic,
and there are no the lattice Weyl vector.

Now, let us consider anisotropic cases. According to \cite{Nik5},
there are 6 anisotropic elliptically reflective lattices. For all of
them the sets $P(\M^{2})$ and their Gram matrices are calculated in
\cite{Nik5}. Using these calculations, one can find the lattice
Weyl vector $\rho$ or prove that it does not exist. We give these
calculations below. For Gram matrices below
we use notations $B_i$ from \cite{GN8}. .

Let $S=S_{3,12}=\langle -4 \rangle\oplus A_2$ (it is $S_5$ in notations
of \cite{Nik5}). For its standard basis
$h$, $e_1$, $e_2$, the set $P(\M^{(2)})$ is
$e_2$, $h-2e_1-e_2$, $h-e_1-2e_2$, $e_1$ with the Gram matrix
\begin{equation}
B_1=
\left(\begin{array}{rrrr}
 2 &  0 & -3 & -1 \\
 0 &  2 & -1 & -3 \\
-3 & -1 &  2 &  0 \\
-1 & -3 &  0 &  2
\end{array}\right)\ .
\label{B_1}
\end{equation}
The lattice Weyl vector is $\rho=h-e_1-e_2$ with $\rho^2=-2$.

Let $S=S_{3,24}=\langle -6\rangle \oplus 2A_1$
(it is $S_1$ in \cite{Nik5}).
For its standard basis $h$, $e_1$, $e_2$, the set
$P(\M^{(2)})$ is $e_1$, $e_2$, $h-2e_1$, $2h-3e_1-2e_2$,
$2h-2e_1-3e_2$, $h-2e_2$ with the Gram matrix
\begin{equation}
B_3=\left(\begin{array}{rrrrrr}
 2  &  0 & -4 & -6 & -4 &  0 \\
 0  &  2 &  0 & -4 & -6 & -4 \\
-4  &  0 &  2 &  0 & -4 & -6 \\
-6  & -4 &  0 &  2 &  0 & -4 \\
-4  & -6 & -4 &  0 &  2 &  0 \\
 0  & -4 & -6 & -4 &  0 &  2
\end{array}\right)\ .
\label{B_3}
\end{equation}
The lattice Weyl vector is $\rho=(h-e_1-e_2)/2$ with $\rho^2=-1/2$.

Let $S=S_{3,36}=\langle -12\rangle \oplus  A_2$ (it is $S_3$
in \cite{Nik5}).
For its standard basis $h$, $e_1$, $e_2$, the set
$P(\M^{(2)})$ is $e_1$, $e_2$, $h-3e_1-2e_2$, $h-2e_1-3e_2$
with the Gram matrix
\begin{equation}
B_2=
\left(\begin{array}{rrrr}
 2 & -1 & -4 & -1 \\
-1 &  2 & -1 & -4 \\
-4 & -1 &  2 & -1 \\
-1 & -4 & -1 &  2
\end{array}\right)\ .
\label{B_2}
\end{equation}
The lattice Weyl vector is $\rho=h/2-e_1-e_2$ with $\rho^2=-1$.

Let $S=S_{3,108}=\langle -36\rangle \oplus A_2$ (it is $S_2$ in
\cite{Nik5}).
For its standard basis $h$, $e_1$, $e_2$, the set
$P(\M^{(2)})$ is $e_1$, $e_2$, $h-5e_1-3e_2$, $2h-9e_1-8e_2$,
$2h-8e_1-9e_2$, $h-3e_1-5e_2$
with the Gram matrix
\begin{equation}
B_4=\left(\begin{array}{rrrrrr}
 2  &  -1 & -7 & -10 & -7 &  -1 \\
-1  &   2 & -1 & -7 & -10 &  -7 \\
-7  &  -1 &  2 & -1 & -7  & -10 \\
-10 & -7 & -1 &  2 & -1  &  -7 \\
 -7 & -10& -7 & -1 &  2 &   -1 \\
 -1 & -7 &-10 & -7 & -1   &  2
\end{array}\right)\ .
\label{B_4}
\end{equation}
The lattice Weyl vector is $\rho=h/4-e_1-e_2$ with $\rho^2=-1/4$.

Let $S=(\langle -60 \rangle \oplus A_2)[1/3,-1/3,1/3]$
(it is $S_4$ in \cite{Nik5}).
For the standard basis $h$, $e_1$, $e_2$ of
$\langle -60 \rangle \oplus A_2$, the set
$P(\M^{(2)})$ is $e_1$, $e_2$, $(h-7e_1-5e_2)/3$,
$(2h-8e_1-13e_2)/3$
with the Gram matrix $U_4$ of the lattice $S_4$ in
\cite[Theorem 1.2]{Nik5}. There is no the lattice Weyl vector.

Let $S=(\langle -132 \rangle \oplus A_2)[1/3,-1/3,1/3]$
(it is $S_6$ in \cite{Nik5}).
For the standard basis $h$, $e_1$, $e_2$ of
$\langle -132 \rangle \oplus A_2$, the set
$P(\M^{(2)})$ is $e_1$, $e_2$, $(h-10e_1-5e_2)/3$,
$(2h-17e_1-16e_2)/3$, $h-7e_1-9e_2$, $(2h-11e_1-19e_2)/3$
with the Gram matrix $U_6$ of the lattice $S_6$ in
\cite[Theorem 1.2]{Nik5}. There is no the lattice Weyl vector.

\medskip

These completes the proof of Theorem \ref{th:el2refW} with description of
the corresponding Gram matrices (equivalently, Gram graphs)
$\Gamma(P(\M^{(2)}))$, and the lattice Weyl vectors $\rho$.

\medskip

{\it The concluding remark.}

By \cite[Theorems 1.2.1, 1.3.1]{GN5}, there are the only two more
Gram matrices $A_{1,III}$ and $A_{3,III}$ for fundamental chambers $\M$
of reflection subgroups $W\subset W^{(2)}(S)$ of finite index
for elliptically $2$-reflective hyperbolic lattices $S$ of rank 3
with lattice Weyl vectors for $P(\M)$.
They are as follows.

\medskip

For the lattice $M_{1,1,3}$ above, let us take
\begin{equation}
P(\M)=\{2a+3b+6c,\ a+6b+9c,\ 5b+6c,\ b,\ a\}
\label{P(M)A_{1,III}}
\end{equation}
It has the Gram matrix
\begin{equation}
A_{1,III}=
\left(\begin{array}{rrrrr}
 2 & -2 & -6 & -6 & -2 \\
-2 &  2 &  0 & -6 & -7 \\
-6 &  0 &  2 & -2 & -6 \\
-6 & -6 & -2 &  2 &  0 \\
-2 & -7 & -6 &  0 &  2
\end{array}\right),
\label{A_{1,III}}
\end{equation}
and the lattice Weyl vector $\rho=(1/3)a+(7/6)b+(5/3)c$ with $\rho^2=-7/18$.
The polygon $\M$ is obtained from the described above polygon
$\M^{(2)}$ for $M_{1,1,3}$ by the reflection at
$2b+3c$. Thus, $[W^{(2)}(M_{1,1,3}):W]=2$.

\medskip

For the lattice $M_{1,6,1}$ above, let us take
$$
P(\M)=\{a,\,3a+12b+8c,\,5a+30b+21c,\,7a+54b+39c,\,8a+72b+53c,
$$
$$
8a+84b+63c,\,7a+84b+64c,\,5a+72b+56c,
$$
\begin{equation}
3a+54b+43c,\,a+30b+25c,\,12b+11c,\,c\}\ .
\label{P(M)A_{3,III}}
\end{equation}
It has the Gram matrix
$$
A_{3,III}=
$$
\begin{equation}
-\left(\begin{array}{rrrrrrrrrrrr}
 -2  & 2 & 11  & 25 & 37 & 47 & 50  &  46 & 37 & 23 & 11 & 1  \\
2  &  -2 &  1 &  11 & 23 & 37 & 46 &   50 & 47 & 37 & 25 & 11 \\
11 & 1 &  -2  &  2  & 11 & 25 & 37 &   47 & 50 & 46 & 37 & 23 \\
25 & 11& 2  & -2 &   1 & 11  & 23 &  37 & 46 & 50 & 47 & 37 \\
37 & 23& 11 &  1 &   -2 &  2  & 11 &  25 & 37 & 47 & 50 & 46 \\
47 & 37& 25 &  11&   2 &  -2  &  1 &  11 & 23 & 37 & 46 & 50 \\
50 & 46& 37 & 23 &  11 &  1  &   -2 &   2 & 11 & 25 & 37 & 47 \\
46 & 50& 47 & 37 &  25 &  11 &  2 &   -2 & 1  & 11 & 23 & 37 \\
37 & 47& 50 & 46 &  37 &  23 &  11&  1 &  -2  & 2  & 11 & 25 \\
23 & 37& 46 & 50 &  47 &  37 &  25&   11& 2  &  -2  & 1  & 11 \\
11 & 25& 37 & 47 &  50 &  46 &  37&   23& 11 & 1  &  -2  & 2  \\
1  & 11& 23 & 37 &  46 &  50 &  47&   37& 25 & 11 & 2  &  -2
\end{array}\right)
\end{equation}
and the lattice Weyl vector $\rho=(1/6)a+(7/4)b+(4/3)c$ with $\rho^2=-1/24$.
The polygon $\M$ is obtained from the described above polygon
$\M^{(2)}$ for $M_{1,6,1}$ by the group $D_3$ of the order 6 generated
by reflections in $a+6b+4c$ and $6b+5c$.
Thus, $W^{(2)}(M_{1,6,1})/W\cong D_3$.

\begin{remark}
\label{rem:K3finautgroup+rho}
{\rm By Remark \ref{rem:K3finautgroup},
ellptically $2$-reflective hyperbolic lattices $S$ with
lattice Weyl vector from the list of Theorem \ref{th:el2refW},
give all Picard lattices $S_X=S(-1)$ of
K3 surfaces $X$ over $\bc$ with finite automorphism group and
$\rk S_X\ge 3$ such that all non-singular rational curves $E$ on $X$
have the same degree $E\cdot h$ with respect to an
ample element $h=t\rho\in S_X$
for some $t>0$ from $\bq$ where $\rho\in S_X\otimes \bq$
is the lattice Weyl vector.

See Remark \ref{rem:K3discr} below about
their arithmetic mirror symmetric K3 surfaces.}
\end{remark}

\begin{remark}
\label{rem:finWeylvec}
{\rm Finiteness (or almost finiteness)
of the set of hyperbolic reflection groups
$W\subset W(S)$ of restricted arithmetic type
and $P(\M)$ for the fundamental chamber $\M$ of $W$
with lattice Weyl vector $\rho$ of elliptic ($\rho^2<0$),
parabolic ($\rho^2=0$) and hyperbolic ($\rho^2>0$) types
was proved in \cite{Nik8}, \cite{Nik10}.

For $\rk S=3$, such cases of elliptic type were classified by
D. Allcock in \cite{Al2}. }
\end{remark}

\subsection{Lorentzian Kac--Moody superalgebras\\
corresponding to lattices of Theorem \ref{th:el2refW}}
\label{subsec:KM1}

Let $S$ be one of lattices of Theorem \ref{th:el2refW},
$\M^{(2)}$ the fundamental chamber for $W^{(2)}(S)$, and
$P(\M^{(2)})$ the set of perpendicular vectors to $\M^{(2)}$
with square $2$. The Gram matrix (or the corresponding graph)
$\Gamma(P(\M^{(2)}))$ is a hyperbolic symmetric generalized Cartan matrix
$A(S)$ with the lattice Weyl vector $\rho(S)$, described in
Sec. \ref{subsec:grmatel2refW}.
In \cite{GN1} --- \cite{GN8}, for lattices $S$ of the rank three and with
the generalized Cartan matrices $A_{i,j}$, $i=1,\,2,\,3,\,4$,
$j=0,\,I,\,II$, and some other parabolic cases below,
we additionally constructed appropriate automorphic forms $\Phi(S)$
on appropriate IV type symmetric domains. Together $A(S)$ and $\Phi(S)$
defined the corresponding Lorentzian Kac--Moody Lie superalgebras
$\goth g (S)$ which are graded by the hyperbolic lattice $S$.

They are as follows.

$A_{1,0}$: The lattice is $S_{3,2}=U\oplus A_1$,
The automorphic form is $\Phi_{1,0,\overline{0}}=\Delta_{35}$.

\medskip

$A_{2,0}$: The lattice is
$S_{3,8,a}=U(2)\oplus A_1$.
The automorphic form is $\Phi_{2,0,\overline{0}}=\Delta_{11}$.

\medskip

$A_{3,0}$:
The lattice is $S_{3,18}=U(3)\oplus A_1$.
The automorphic form is $\Phi_{3,0,\overline{0}}=D_6\Delta_1$.

\medskip

$A_{4,0,\overline{0}}=A_{1,II}$:
The lattice is $S_{3,32,a}=U(4)\oplus A_1$.
The automorphic form is $\Phi_{4,0,\overline{0}}=\Delta_5^{(4)}$.

\medskip

$A_{1,I}$:
The lattice is $S_{3,8,b}=(\langle -24 \rangle \oplus A_2)[1/3,-1/3,1/3]$.
The automorphic form is
$\widetilde{\Phi}_{1,I,\overline{0}}=
\Phi_{1,0,\overline{0}}(Z)/\Phi_{1,II,\overline{0}}(2Z)=
\Delta_{35}(Z)/\Delta_{5}(2Z)$.

\medskip

$A_{2,I}$:
The lattice $S_{3,32,b}=\langle -8 \rangle\oplus 2A_1$.
The automorphic form is
$\widetilde{\Phi}_{2,I,\overline{0}}=
\Phi_{2,0,\overline{0}}(Z)/\Phi_{2,II,\overline{0}}(2Z)=
\Delta_{11}(Z)/\Delta_{2}(2Z)$.
\medskip

$A_{3,I}$:
The lattice is $S_{3,72}=\langle -24 \rangle \oplus A_2$.
The automorphic form is
$\widetilde{\Phi}_{3,I,\overline{0}}=
\Phi_{3,0,\overline{0}}(Z)/\Phi_{3,II,\overline{0}}(2Z)=
D_{6}(Z)\Delta_{1}(Z)/\Delta_{1}(2Z)$.

\medskip

$A_{4,I}=A_{2,II}$:
The lattice is $S_{3,128,a}=U(8)\oplus A_1$ (or $S_{3,32,c}=U(8)[1/2,1/2]$
$\oplus A_1$ which has the same
elements of square $2$).
The automorphic form is
$\widetilde{\Phi}_{4,I,\overline{0}}=
\Phi_{4,0,\overline{0}}(Z)/\Phi_{4,II,\overline{0}}(2Z)=\Delta_{5}^{(4)}(Z)/\Delta_{1/2}(2Z)$.

\medskip

$A_{1,II}$:
The lattice is $S_{3,32,a}=U(4)\oplus A_1$.
The automorphic form is $\Phi_{1,II,\overline{0}}=\Delta_5$.

\medskip

$A_{2,II}$: The lattice is
$S_{3,128,a}=U(8)\oplus A_1$.
The automorphic form is $\Phi_{2,II,\overline{0}}=\Delta_2$.

\medskip

$A_{3,II}$: The lattice is
$S_{3,288}=U(12)\oplus A_1$.
The automorphic form is $\Phi_{3,II,\overline{0}}=\Delta_1$.

\medskip

{\it Additional parabolic cases:}

\medskip

$A_{4,II}$ is $\Gamma(P(\M^{(2)}))$ for $U(16)\oplus A_1$.
The lattice is  $U(16)\oplus A_1$.
The automorphic form is $\Phi_{4,II,\overline{0}}=\Delta_{1/2}$.

\medskip

$A=\Gamma(P(\M^{(2)}))$ for $U\oplus \langle 4 \rangle$ (the $\M^{(2)}$
is infinite polygon with angles $\pi/2$). The lattice is $U\oplus \langle 4 \rangle$.
The automorphic form is $\Phi_{2,\overline{1}}=\Psi_{12}^{(2)}$.

\medskip

$A=\Gamma(P(\M^{(2)}))$ for $U\oplus \langle 6 \rangle$ (the $\M^{(2)}$
is infinite polygon with angles $\pi/3$). The lattice is $U\oplus \langle 6 \rangle$.
The automorphic form is $\Phi_{3,\overline{1}}=\Psi_{12}^{(3)}$.

$A=\Gamma(P(\M^{(2)})$ for $U\oplus \langle 8 \rangle$ (the $\M^{(2)}$
is infinite polygon with angles $0$). The lattice is $U\oplus \langle 8 \rangle$.
The automorphic form is $\Phi_{4,\overline{1}}=\Psi_{12}^{(4)}$.

\medskip

Using these basic automorphic forms, in \cite{GN1} --- \cite{GN8},
we constructed many other Lorentzian Kac--Moody superalgebras.
Roughly speaking, they are obtained by products and quotients of
these basic forms.

We want to extend these results to other lattices of Theorem
\ref{th:el2refW}, especially to the higher ranks $\ge 4$.
In Sections \ref{pullback}--\ref{seriesD8},
we construct $2$-reflective automorphic forms for 2-reflective
hyperbolic lattices of Theorem \ref{th:el2refW}.
Using different base functions, we get six series of such
automorphic forms.
\smallskip

1) For the lattices $U\oplus K$,
$$
K=A_1,\,2A_1,A_2;
\ 3A_1,\, A_3;\
4A_1,\,2A_2,\,A_4,\,D_4;
\ A_5, D_5;
$$
$$
3A_2,\,2A_3,\,A_6,\,D_6,\,E_6;\
A_7,\,D_7,\,E_7; \ 2D_4,\,D_8,\,E_8,\,2E_8
$$
and $U(2)\oplus 2D_{4}$,
it is done in Theorem \ref{thm-Phi}
and Theorem \ref{thm-affineKM}.
\smallskip

2) For the lattices $\langle -2 \rangle \oplus kA_1$, $2\le k\le 9$ (the case
$k=9$ is parabolic), it is done in Theorem \ref{LiftD}.
\smallskip

3) For the lattices $U(4)\oplus kA_1$, $1\le k\le 4$
(the case $k=4$ is parabolic), it is done
in Theorem \ref{U(4)pluskA_1}.
\smallskip

4) For the lattices $U(3)\oplus A_2$, $U(3)\oplus 2A_2$, $U(3)\oplus 3A_2$
(the last case is parabolic), it is done in Theorem \ref{lift3A2}.
\smallskip

5) For the lattices $U(2)\oplus D_4$ and $U(4)\oplus D_4$,
it is done in Theorem \ref{U(2)D4} and Theorem \ref{Thm-U(4)+D_4}.
\smallskip

6) For the $2$-reflective lattices of parabolic type
$U\oplus K$,
$$
K=A_1(2),\ A_1(3),\ A_1(4),\ D_2(2),\
A_2(2),\ A_2(3),\ A_3(2),\  D_4(2),\ E_8(2),
$$
it is done in Theorem \ref{thmLeech}.

\section{The strongly reflective modular forms}
\label{pullback}

Lorentzian Kac-Moody algebras give automorphic corrections of hyperbolic
Kac-Moody algebras  since their Kac-Weyl-Borcherds denominator functions
are automorphic forms with respect to arithmetic orthogonal
groups of signature $(n,2)$ (see Section \ref{sec:data1-5}). Here we
give the  general set-up for
construction of corresponding  automorphic  forms which we shortly call
as {\it automorphic corrections} of the hyperbolic root systems.
We note that the signature $(2,n)$ is  usually used in algebraic geometry
and the theory of automorphic forms. The signatures  $(n,0)$,
$(n,1)$ and $(n,2)$ are natural in the theory of Lie algebras.

Let $T$ be an integral lattice with a
quadratic form of signature $(n,2)$ and let
\begin{equation}\label{DL}
\Omega (T)=\{[Z] \in \PP(T\otimes \CC) \mid
  (Z, Z)=0,\ (Z,\Bar Z)<0\}^+
\end{equation}
be the associated $n$-dimensional Hermitian domain of type IV
(here $+$  denotes one of its two connected components)
and $\Omega(T)^\bullet$ its affine cone.
We denote by $\Orth^+(T)$ the
index $2$ subgroup of the integral orthogonal group $\Orth(T)$
preserving $\Omega (T)$.

\begin{definition}
Suppose that $T$ has signature $(n,2)$, with $n\ge 3$.
Let $k\in \ZZ$ and let $\chi \colon \Gamma\to \CC^*$
be a character of a subgroup $\Gamma\subset \Orth^+(T)$ of finite index.
A holomorphic function $F\colon \Omega(T)^\bullet\to \CC$ on the affine
cone $\Omega (T)^\bullet$ over $\Omega(T)$ is called a \emph{modular form}
of \emph{weight} $k$ and character $\chi$ for the group $\Gamma$ if
\begin{equation*}
F(tZ)=t^{-k}F(Z)\quad \forall\,t\in \CC^*,
\end{equation*}
\begin{equation*}
F(gZ)=\chi(g)F(Z)\quad  \forall\,g\in \Gamma.
\end{equation*}
A modular form is called a \emph{cusp form}
if it vanishes at every cusp.
\end{definition}

We denote the linear spaces of modular and cusp forms of weight $k$
and character $\chi$ by $M_k(\Gamma,\chi)$ and $S_k(\Gamma,\chi)$
respectively.
We recall that a cusp is defined by  an isotropic line or plane in $T$.
For applications, one of the most important  subgroups of $\Orth^+(T)$ is
the stable orthogonal group
\begin{equation}\label{def-TO}
\Tilde{\Orth}^+(T) =\{g\in \Orth^+(T)\mid g|_{T^*/T}=\id\}
\end{equation}
where $T^*$ is the dual lattice of $T$.

For any $v\in L\otimes \QQ$ such that $v^2=(v,v)>0$ we define the
\emph{rational quadratic divisor}
\begin{equation}\label{div-Dv}
\cD_v=\cD_v(T)=\{[Z] \in \Omega (T)\mid (Z,v)=0\}\cong \Omega (v^\perp_T)
\end{equation}
where $v^\perp_T$ is an even integral lattice of signature $(n-1,2)$.
Therefore, $\cD_v$ is also a homogeneous domain of type~IV.  We note
that $\cD_v(T)=\cD_{t v}(T)$ for any $t\ne 0$.  The theory of
automorphic Borcherds products (see \cite{B4}--\cite{B5}
and \cite{GN6}, \cite{CG1}, \cite{G12} for the Jacobi variant of these products)
gives a method of constructing automorphic forms with rational quadratic divisors.

The reflection with respect to the hyperplane defined by a
non-isotropic vector $v\in T^*$ is given by
\begin{equation}\label{sigma_r}
\sigma_v\colon l\Mapsto l-\frac{2(l,v)}{(v,v)}v.
\end{equation}
If $v\in T^*$ and $(v,v)>0$, the divisor $\cD_v(T)$ is called a
\emph{reflective divisor} if $\sigma_v\in \Orth(T)$.
In what follows we consider the  divisor of a modular form $F$ as a divisor of
$\Omega (T)$ since $F$ is homogeneous on $\Omega (T)^\bullet$.

\begin{definition}
\label{reflform}
A modular form $F\in M_k(\Gamma,\chi)$ is called {\bf reflective} if
\begin{equation}\label{eq-div-rmf}
\supp(\divv_{\Omega (T)} F) \subset  \bigcup_{\substack{\pm v\in T \vspace{1\jot}\\
 v \ {\rm is \ primitive}\vspace{1\jot}\\
\sigma_v\in \Gamma\text{ or }-\sigma_v\in \Gamma}} \cD_v(T).
\end{equation}
We call $F$  $2$-reflective if all $v$  above are of square $2$.
We call $F$  strongly reflective if multiplicity of any irreducible component
of $\divv F$ is equal to one.
We say that a strongly reflective modular form  $F$ is
 a modular form with the complete  $2$-divisor if
\begin{equation}\label{eq-divPhi}
\divv_{\Omega (T)} F = \Sum_{\substack{v\in R_{2}(T)/\{\pm 1\} }}
\cD_v(T)
\end{equation}
where $R_{2}(T)$ is the set of $2$-vectors (roots) in $T$.
\end{definition}

Our main goal is to construct {\it strongly reflective}
modular forms with the {\it complete  $2$-divisor}
related to the hyperbolic root systems described in Section \ref{sec:data1-3}.
\medskip

\noindent
\begin{example} \label{borchform}
{\it The Borcherds modular form $\Phi_{12}$}
{\rm (see \cite{B4}).
This is the unique, up to a constant, modular form of
the singular (i.e. the minimal possible) weight $12$ and  character
$\det$ with respect to
$\Orth^+(II_ {26,2})$
$$
\Phi_{12}\in M_{12}(\Orth^+(II_ {26,2}), \det)
$$
where $II_ {26,2}$ is the unique (up to an isomorphism) even
unimodular lattice of signature $(26,2)$.
It was proved in \cite{B4}   that
$$
\divv_{\Omega (II_ {26,2})} \Phi_{12} =
\Sum_{\substack{v\in R_{2}(II_ {26,2})/\{\pm 1\}}} \cD_v(II_ {26,2}).
$$
We note that  all  $2$-vectors in  $II_ {26,2}$
form only one orbit with respect to $\Orth^+(II_ {26,2})$.}
\end{example}

\begin{example} \label{Siegelmodform}
{\rm If
$$
T_{2t}^{(5)}=2U\oplus \latt{2t}\qquad
{\rm where}\quad
U\cong
\begin{pmatrix}
\ \, \,0&-1\\ -1&\ \,\,0
\end{pmatrix}, \quad t\in \mathbb N,
$$
of signature $(3,2)$,
then the modular forms with respect to $\Tilde{\SO}^{+}(T_{2t}^{(5)})$
coincide with Siegel modular forms of genus two with respect to the paramodular
group $\Gamma_t\subset \Sp_2(\QQ)$ (see \cite{G2}, \cite{GN6}).
In particular, if $t=1$ we obtain the Siegel
modular forms with respect to $\Sp_2(\ZZ)$.
A large well defined class of strongly reflective modular forms
for $\Gamma_t$ was described
in \cite{GN1}--\cite{GN8}. See also \cite{CG1} where
all Siegel modular forms with the simplest diagonal divisor
were classified for all Hecke congruence subgroups of all paramodular groups.}
\end{example}

All reflective  modular forms  have a Borcherds product expansion.
It follows from the results of  J.H. Bruinier who proved existence of a
Borcherds product expansion  for modular forms with a divisor
which is sum of rational quadratic divisors if the lattice is not very exotic
(see \cite{Bru}).
To construct strongly reflective  modular forms for the reflective hyperbolic lattices
with a lattice Weyl vector we use the method of quasi pull-back of the Borcherds form $\Phi_{12}$ which was proposed in \cite[pp. 200--201]{B1}.
It  was successfully applied to the theory of
moduli spaces in \cite{BKPS},
\cite{GHS1}--\cite{GHS4}. See \cite[\S 8]{GHS4}
on the detailed description of this construction.

The statements of the next theorem were  proved in
\cite[Theorem 1.2]{BKPS} and  \cite[Theorems 8.3 and 8.18]{GHS4}.

\begin{theorem}\label{thm:qpb}
Let $T\emb II_{26,2}$ be a primitive  sublattice of
signature $(n,2)$, $n\ge 3$, and let $\Omega (T)\emb\Omega (II_{26,2})$ be
the corresponding embedding of the homogeneous domains.
The set of $2$-roots
\begin{equation*}
R_{2}(T^\perp)=\{v\in II_{26,2}\mid v^2=2,\ (v, T)=0\}
\end{equation*}
in the orthogonal complement is finite.
We put $N(T^\perp)=\# R_{2}(T^\perp)/2$. Then the function
\begin{equation}\label{qpb}
  \left. \Phi_{12}|_T=
    \frac{\Phi_{12}(Z)}{
      \prod_{v\in R_{2}(T^\perp)/{\pm 1}} (Z, v)}
    \ \right\vert_{\Omega (T)^\bullet}
  \in M_{12+N(T^\perp)}(\Tilde{\Orth}^+(T),\, \det),
\end{equation}
where in the product over $v$ we fix a finite system of
representatives in $R_{2}(T^\perp)/{\pm 1}$.  The modular form
$\Phi_{12}|_T$ vanishes only on rational quadratic divisors of type
$\cD_u(T)$ where $u\in T^*$ is the orthogonal projection
of a $2$-root $r\in II_{26,2}$ to $T^*$ satisfying  $0<(u,u)\le 2$.
If the set $R_{2}(T^\perp)$ of $2$-roots in $T^\perp$ is non-empty
then the quasi pull-back
$\Phi_{12}|_T\in S_{12+N(T^\perp)}(\Tilde{\Orth}^+(T),\, \det)$
is a cusp form.
\end{theorem}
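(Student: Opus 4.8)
The plan is to exhibit $\Phi_{12}|_T$ as the restriction to $\Omega(T)^\bullet$ of a single globally holomorphic function on the smooth cone $\Omega(II_{26,2})^\bullet$, obtained by cancelling the forced vanishing of $\Phi_{12}$ along $\Omega(T)$. First I would note that for every $v\in R_2(T^\perp)$ one has $\Omega(T)\subset\cD_v(II_{26,2})$, since $Z\in T\otimes\CC$ forces $(Z,v)=0$; thus $\Phi_{12}$ vanishes identically on $\Omega(T)$ as soon as $R_2(T^\perp)\ne\emptyset$, and dividing by the linear forms $(Z,v)$ is precisely what extracts a non-trivial restriction. To see that the quotient is holomorphic I would argue on the level of divisors: by Example \ref{borchform}, $\divv\Phi_{12}=\Sum_{r\in R_2(II_{26,2})/\{\pm1\}}\cD_r$ with each component reduced, while $\prod_{v\in R_2(T^\perp)/\{\pm1\}}(Z,v)$ has divisor $\Sum_{v\in R_2(T^\perp)/\{\pm1\}}\cD_v$, a sub-sum. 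Hence $\divv\Phi_{12}-\divv\bigl(\prod_v(Z,v)\bigr)=\Sum_{r\notin T^\perp}\cD_r$ is \emph{effective}, so the quotient is holomorphic on all of $\Omega(II_{26,2})^\bullet$ and restricts holomorphically to $\Omega(T)^\bullet$. It is nonzero at any point of $\Omega(T)$ avoiding every $\cD_r$ with $r\notin T^\perp$, hence not identically zero. The weight is then immediate from $\Phi_{12}(tZ)=t^{-12}\Phi_{12}(Z)$ and $\prod_v(tZ,v)=t^{N(T^\perp)}\prod_v(Z,v)$, giving $\Phi_{12}|_T(tZ)=t^{-12-N(T^\perp)}\Phi_{12}|_T(Z)$.

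For modularity under $\Tilde{\Orth}^+(T)$ I would glue isometries. As $T\emb II_{26,2}$ is primitive and $II_{26,2}$ unimodular, $II_{26,2}$ is an overlattice of $T\oplus T^\perp$ equal to the graph of an anti-isometry $T^*/T\to(T^\perp)^*/T^\perp$. Given $g\in\Tilde{\Orth}^+(T)$, the isometry $g\oplus\id_{T^\perp}$ acts trivially on both discriminant groups, hence preserves this graph and extends to some $\Tilde g\in\Orth^+(II_{26,2})$ with $\Tilde g|_T=g$, $\Tilde g|_{T^\perp}=\id$ and $\det\Tilde g=\det g$. Since $\Phi_{12}(\Tilde gZ)=\det(\Tilde g)\,\Phi_{12}(Z)$, while $(\Tilde gZ,v)=(Z,\Tilde g^{-1}v)=(Z,v)$ for $v\in T^\perp$ leaves the denominator unchanged, dividing yields $\Phi_{12}|_T(gZ)=\det(g)\,\Phi_{12}|_T(Z)$; together with holomorphy and the weight count this gives $\Phi_{12}|_T\in M_{12+N(T^\perp)}(\Tilde{\Orth}^+(T),\det)$.

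The divisor statement is read off the same global quotient, whose zero divisor on $\Omega(II_{26,2})^\bullet$ is $\Sum_{r\notin T^\perp}\cD_r$. For such an $r$, write $u=\pr_{T\otimes\QQ}(r)\in T^*$; then $(Z,r)=(Z,u)$ on $\Omega(T)$, so $\cD_r\cap\Omega(T)=\cD_u(T)$, and from $2=r^2=u^2+(\pr_{T^\perp}r)^2$ with $(\pr_{T^\perp}r)^2\ge0$ we obtain $0<u^2\le2$ (the value $u=0$ corresponding exactly to the roots of $T^\perp$ already divided out). Hence $\Phi_{12}|_T$ vanishes only on divisors $\cD_u(T)$ of this form.

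The cusp form assertion, assuming $R_2(T^\perp)\ne\emptyset$, is the delicate point and the one I expect to be the main obstacle. The plan is to test $\Phi_{12}|_T$ at each rational boundary component of the Baily--Borel compactification, i.e.\ at the cusps attached to isotropic lines and planes of $T$, and to show that the leading (boundary) term of its Fourier--Jacobi expansion vanishes there. For this I would input the Borcherds product expansion of $\Phi_{12}$ around the corresponding cusp of $II_{26,2}$: $\Phi_{12}$ is itself a cusp form, its expansion governed by the Weyl (Leech) vector $\rho$, and the restrict-and-divide procedure produces an analogous product whose leading exponent is the projection of $\rho$ to $T$. The mechanism is that each orthogonal root divided out removes a boundary factor, yet when $N(T^\perp)\ge1$ the projected Weyl vector remains strictly interior to the positive cone of $T$, forcing the zeroth Fourier--Jacobi coefficient to vanish at every cusp. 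Making this quantitative --- tracking exactly how division by $\prod_v(Z,v)$ interacts with the boundary expansions at both $0$- and $1$-dimensional cusps --- is the heart of the matter and is the content imported from \cite[Theorem 1.2]{BKPS} and \cite[Theorems 8.3 and 8.18]{GHS4}; once the leading boundary coefficient is shown to vanish, the cusp form property follows.
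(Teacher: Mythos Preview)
The paper does not give its own proof of this theorem; immediately before the statement it says that the result is taken from \cite[Theorem~1.2]{BKPS} and \cite[Theorems~8.3 and~8.18]{GHS4}, and no argument follows. So there is no in-paper proof to compare against, only the cited sources.

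Your sketch is essentially the standard argument that those references carry out. The holomorphy of the global quotient via the effective-divisor comparison, the weight count, and the extension of $g\in\Tilde\Orth^+(T)$ to $II_{26,2}$ by $g\oplus\id_{T^\perp}$ (using unimodularity and triviality on both discriminant forms) are exactly how \cite{GHS4} proceeds; your identification of the divisor through $\pr_{T^*}$ is likewise the same computation, and indeed the paper reproduces it verbatim in the proof of the \emph{next} theorem (Theorem~\ref{Tmain}). One small omission: you jump from $u=\pr_{T^*}(r)$ to $0<u^2\le 2$, but positive-definiteness of $T^\perp$ only gives $u^2\le 2$, and one must also note that when $u\ne 0$ with $u^2\le 0$ the set $\cD_u(T)$ is empty, so such roots contribute nothing to the divisor on $\Omega(T)$ --- the paper records this case distinction explicitly.

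For the cusp-form assertion you correctly flag the genuine difficulty and point to the same references; your heuristic about the projected Weyl vector remaining interior is the right picture, and the precise argument is exactly the content of \cite[Theorem~8.18]{GHS4}. There is nothing to correct here beyond acknowledging that this step is imported wholesale rather than proved.
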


In \cite{G12} we proposed  twenty four Jacobi type constructions
of the Bor\-cherds function $\Phi_{12}$ based on the twenty four
one dimensional  boundary \linebreak
components of the Baily-Borel
compactification of the modular variety \linebreak
$\Orth^+(II_{26,2})\setminus \Omega (II_{26,2})$.
These components correspond exactly to the  classes of
positive definite  even unimodular lattices of rank $24$.
They are the $23$ Niemeier lattices $N(R)$ uniquely determined by their root
sublattices  $R$ of rank $24$
\begin{gather*}
3E_8,\ E_8\oplus D_{16},\  D_{24},\  2D_{12},\  3D_8,\
4D_6,\ 6D_4,\\
A_{24},\  2A_{12},\  3A_8,\  4A_6,\  6A_4,\  8A_3,\  12A_2,\  24A_1,\\
E_7\oplus A_{17},\ 2E_7\oplus D_{10}, \ 4E_6,\ E_6\oplus D_{7}\oplus A_{11},\\
A_{15}\oplus D_{9},\ 2A_{9}\oplus D_{6},\ 2A_{7}\oplus D_{5},\ 4A_{5}\oplus D_{4}
\end{gather*}
and the Leech lattice $\Lambda_{24}=N(\emptyset)$ without roots
(see \cite[Chapter 18]{CS}). We note that
$II_{26,2}\cong 2U\oplus N(R)$.
The quasi pull-backs
of $\Phi_{12}$ considered in the  different
one-dimensional boundary components give the first series
of strongly reflective modular forms which determine
the Lorentzian Kac-Moody algebras of some reflective lattices
considered in Sect. \ref{sec:data1-3}.

The next theorem is a particular case of a more general result
proved in \cite{G16} and a generalisation of
\cite[Theorem 3.4] {GH}.

\begin{theorem}\label{Tmain}
Let $K$ be a primitive sublattice of $N(R)$ containing  a direct  summand of
the same rank of a root lattice $R$
of a Niemeier lattice $N(R)$ or a primitive sublattice of the Leech lattice
$N(\emptyset)=\Lambda_{24}$. We assume that
$K$ satisfies the following condition:
$$
({\rm Norm}_2)\qquad \forall\, \bar c \in K^*/K\
\ (\bar c^2\not\equiv 0\  \mathrm{mod}\  2\ZZ)\ \
\exists\, h_c\in \bar c\,:\,0<h_c^2<2.
$$
We consider  $T=2U\oplus K $ as a sublattice of the
corresponding model of $II_ {26,2}=2U\oplus N(R)$.
Then  $\Phi_{12}|_T$ is a strongly reflective
modular form with the complete $2$-divisor. More exactly
$$
\Phi_{12}|_T\in M_{k}(\Tilde{\Orth}^+(T),\det)
$$
where $k=12+|R_{2}(K^\perp)|/2$ and
\begin{equation}\label{div-formula}
\divv \Phi_{12}|_T=
\sum _{v\in R_{2}(T)/{\pm 1}} \cD_v(T).
\end{equation}
\end{theorem}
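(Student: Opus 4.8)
The plan is to read off almost everything from the quasi pull-back Theorem \ref{thm:qpb} and then to pin down the divisor exactly, the whole difficulty being concentrated in excluding ``short'' projections of $2$-roots. First I would record the geometric set-up: since the two hyperbolic planes $2U$ are common to $T=2U\oplus K$ and to $II_{26,2}=2U\oplus N(R)$, the orthogonal complement of $T$ in $II_{26,2}$ is exactly $K^\perp$ (the complement of $K$ in $N(R)$), which is positive definite. Hence $R_2(T^\perp)=R_2(K^\perp)$, and Theorem \ref{thm:qpb} already delivers $\Phi_{12}|_T\in M_{k}(\Tilde{\Orth}^+(T),\det)$ with $k=12+|R_2(K^\perp)|/2$, together with the statement that $\divv \Phi_{12}|_T$ is supported on divisors $\cD_u(T)$, where $u=\pr_{T}(v)\in T^*$ runs through the projections of $2$-roots $v\in II_{26,2}$ with $0<(u,u)\le 2$.

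The elementary half goes as follows. For a $2$-root $v\notin T^\perp$ write $v=u+w$ with $u=\pr_T(v)\in T^*$ and $w=\pr_{K^\perp}(v)\in (K^\perp)^*$, so that $(u,u)+(w,w)=2$ and $(w,w)\ge 0$. If $(u,u)=2$ then $w=0$, whence $v=u\in T$ is an honest $2$-root of $T$; conversely every $2$-root $\beta\in T\subset II_{26,2}$ produces the component $\cD_\beta(T)$. Thus, once the short case $0<(u,u)<2$ is excluded, the only contributing $v$ are the pairs $\pm\beta$, $\beta\in R_2(T)$, each giving $\cD_\beta(T)$ with multiplicity one, and the formula \eqref{div-formula} for the complete $2$-divisor follows.

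So the real content is to prove that no $2$-root $v\in II_{26,2}$ has $0<(w,w)<2$. Here I would use the unimodularity of $N(R)$, which yields the glue anti-isometry $\gamma\colon K^*/K\to (K^\perp)^*/K^\perp$ with $q_{K^\perp}(\gamma(\bar a))\equiv -q_K(\bar a)\pmod{2\ZZ}$ and identifies $N(R)$ with the graph of $\gamma$. Writing $\bar a$ for the class of $a=\pr_K(v)$, the congruence $(w,w)\equiv q_{K^\perp}(\overline w)\equiv -q_K(\bar a)\pmod{2\ZZ}$ forces $(w,w)\in 2\ZZ$ unless $\bar a$ has odd norm; since $K^\perp$ is even, this already disposes of the classes $\bar a=0$ and of those of even norm. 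In the remaining odd-norm case I would invoke $(\mathrm{Norm}_2)$ to choose $h\in K^*$ with $\bar h=\bar a$ and $0<(h,h)<2$. Then $(\bar h,\overline w)=(\bar a,\gamma(\bar a))$ is a glued pair, so $h+w\in N(R)$, and $(h+w,h+w)=(h,h)+(w,w)$ lies strictly between $0$ and $4$; as $N(R)$ is even, $h+w$ must be a $2$-root of $N(R)$ with $\pr_K(h+w)=h$ and $0<(h,h)<2$.

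The main obstacle, and the final step, is to contradict the existence of such a $2$-root. For a Niemeier lattice the $2$-roots of $N(R)$ are precisely the roots of $R$, and the hypothesis that $K$ contains a full-rank direct summand $R_0$ of $R$ (so that $K\otimes\QQ=R_0\otimes\QQ$ and $R=R_0\oplus R_1$ orthogonally) shows that every root of $R$ lies in $R_0\subset K$ or in $R_1\perp K$; hence its projection to $K$ has square $0$ or $2$, never in $(0,2)$, the desired contradiction. For the Leech case $N(\emptyset)=\Lambda_{24}$ there are no norm-$2$ vectors at all and the minimal nonzero norm is $4$, so a vector $h+w$ of norm in $(0,4)$ simply cannot exist, again a contradiction. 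I expect the verification that the roots of $R$ (respectively the minimal vectors of $\Lambda_{24}$) cannot acquire a short projection onto $K$ --- that is, the precise interplay between the glue of $K$, its primitivity, and $(\mathrm{Norm}_2)$ --- to be the delicate point; everything else is bookkeeping with discriminant forms and the already-cited Theorem \ref{thm:qpb}.
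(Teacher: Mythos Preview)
Your proposal is correct and follows essentially the same route as the paper: invoke Theorem~\ref{thm:qpb} for the modularity, weight, and support of the divisor, then for a $2$-root $v$ with short projection $0<\alpha^2<2$ use $(\mathrm{Norm}_2)$ to replace the $K^*$-component by a short representative $h$ and obtain a $2$-root $h+\beta\in N(R)$ lying outside $K\oplus K^\perp_N$, contradicting the hypothesis on $K$ (for the Leech lattice the contradiction is that $\Lambda_{24}$ has no $2$-vectors). The paper's write-up is slightly terser --- it notes directly that $h^2=\alpha^2$, hence $(h+\beta)^2=2$, rather than bounding the norm in $(0,4)$ and invoking evenness --- but the logic is the same, and your case split into even/odd discriminant classes is harmless since $0<\alpha^2<2$ already forces $q_K(\bar a)\not\equiv 0\pmod{2\ZZ}$.
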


\begin{remark} \label{remarkNorm}
{\rm In  the discriminant group $A_K=K^*/K$, if
$h\in \bar c\in K^*/K$
then $(h,h)\equiv (\bar c, \bar c)=q_K(\bar c)\mod 2\ZZ$
is well defined modulo  $2$.
The condition (${\rm Norm}_2$) claims that there exists
an element $h_c$ in every $\bar c$ with the smallest possible norm.}
\end{remark}
\begin{proof}
The quasi pull-back $\Phi_{12}|_T$ is a modular form with respect
to the character $\det$. For any $2$-vector $v\in T$
the reflection $\sigma_v$ is in $\Tilde{\Orth}^+(T)$.
Therefore $\Phi_{12}|_T$ vanishes on the walls of all
$2$-reflections in $T$.

For any $2$-vector $v\in II_ {26,2}$  we write
$v=\alpha+\beta$ where
$$
\alpha=\pr_{T^*}(v)\in T^*,
\ \beta\in (T^\perp)^*=(K^\perp_{N} )^*\quad{\rm and }\quad
\alpha^2+\beta^2=2,\quad  \beta^2\ge 0.
$$
Then  we have
\begin{equation*}
\Omega (T)\cap\cD_v(II_ {26,2})=
\begin{cases}
\cD_\alpha(T),&\text{ if }\  \alpha^2>0, \\
\emptyset,&\text{ if }\  \alpha^2\le  0,\ \alpha\ne 0,\\
\Omega (T),&\text{ if }\  \alpha=0, \text{ i.e.\ } v\in T^\perp.
\end{cases}
\end{equation*}
We note that if $\beta^2=0$ then $v\in T$ because
$T$ is primitive in $II_ {26,2}$. In this case, we get
the divisor $\cD_v(T)$ in $\Omega (T)$.

Let $0< \alpha^2<2$.
Since $K$ satisfies $(\rm{Norm}_2)$-condition
and $K^*/K =T^*/T$, there exists
$h\in K^*$ such that  $h\in \alpha+K$ and $h^2=\alpha^2$.
We have $h+\beta\in v+K\subset II_ {26,2}$.
Therefore,
$$
h+\beta\in \bigl(K^* \oplus (K^\perp_{N} )^*\bigr)
\cap \bigl(2U\oplus N(R) \bigr)=N(R)
$$
and $(h+\beta)^2=2$.
It follows that $h+\beta$ is a $2$-root in $N(R) $
which does not belong to $K \oplus K^\perp_N $.
This contradicts to the condition on the roots in
$K\subset N(R)$.
\end{proof}

In order to apply the last theorem, we have to fix
models of irreducible $2$-roots lattices $R$.
\begin{equation}\label{Dn}
D_n=\{\sum_{i=1}^n x_ie_i\in \bigoplus_{i=1}^n\ZZ e_i
=\ZZ^n\ |\, x_1+\dots+x_n\in 2\ZZ,\   (e_i,e_j)=\delta_{i,j}\}
\end{equation}
is the maximal even sublattice of the odd unimodular lattice
$\mathbb Z^n$.
Then
$$
A_n=\{\Sum_{i=1}^{n+1} x_ie_i\in \ZZ^{n+1}\,|\, x_1+\dots+x_{n+1}=0\}
\subset D_{n+1}.
$$
In particular,
$A_1\cong \latt{2}$,
$A_1\oplus A_1\cong D_2$ and  $A_3\cong  D_3$. We note that
$D_1\cong \latt{4}=A_1(2)$ is not a root lattice.
$A_n=(1,\dots,1)^\perp_{\ZZ^{n+1}}$, therefore  $A_n^*/A_n$ is the  cyclic group
$C_{n+1}$ of order $n+1$. $D_n^*/D_n$
is isomorphic to $C_4$ for odd $n$ and to
$C_2\times C_2$  for even $n$.
The classes of the discriminant  group  $R^*/R$ of these root lattices
are generated by the following elements having
{\it the minimal  possible norm} in the corresponding classes modulo $A_n$ or $D_n$:
$$
D_n^*/D_n=
\{\,0,\  e_n,\  (e_1+\dots+e_n)/2,\   (e_1+\dots+e_{n-1}-e_n)/2 \,\}+D_n,
$$
\begin{multline*}
A_n^*/A_n=\\
\{\,
\varepsilon_i=\frac{1}{n+1}
(\ \underbrace{\,i,\dots, i,\,}_{\text{$n+1-i$}}\
\underbrace{\,i-n-1,\dots,i-n-1\,}_{\text{$i$}}\ ),\ 1\le i\le n+1\,\}+A_n.
\end{multline*}
Then the nontrivial classes of $D_n^*/D_n$ have representatives of norm
$1$ and $\frac{n}4$. For $A_n$ we see  that
if $n\le 7$ then $(\varepsilon_i, \varepsilon_i)\le 2$
and  $(\varepsilon_i, \varepsilon_i)=2$ only for $n=7$ and $i=4$.
\smallskip

\begin{example} \label{lattN8}
{\rm The following even lattice of determinant $2^6$
is important for $K3$ surfaces with symplectic involutions:
\begin{equation}\label{N8}
N_8=\latt {8A_1, h=(a_1+...+a_8)/2} \cong D_8^*(2),\quad
(a_i,a_j)=2\delta_{i,j},\ h^2=4
\end{equation}
which is usually called {\it Nikulin's lattice}.
Then $\bar h= h+8A_1$ is an isotropic element of the discriminant
group $(8A_1)^*/(8A_1)$
$$
8A_1\subset N_8\subset N_8^*\subset 8A_1^*,\quad \det N_8=2^6
\quad{\rm and}\quad  N_8^*/N_8\cong  {\bar h}^\perp/\bar h.
$$
From the last representation of $N_8^*/N_8$ it  follows that the lattice $N_8$
satisfies the condition $({\rm Norm}_2)$ of Theorem \ref{Tmain}. }
\end{example}

\begin{theorem}\label{thm-Phi}
For the lattice $T=2U\oplus K $ where $K$ is one
of the following $24$ lattices
$$
{}\quad A_1,\ 2A_1,\ 3A_1,\ 4A_1,\ N_8;\ \ A_2,\ 2A_2,\ 3A_2;\ \ A_3,\ 2A_3;\ \
A_4,\ A_5,\ A_6,\  A_7;
$$
$$
k=35,\ 34,\ \ 33,\  \ \ 32,\ \ \ 28; \ \ \ 45,\ \  42,\ \ \  39;\ \ \  \ 54,\ \  48;\ \
\ \ 62,\ \   69,\ \  75,\ \  80;
$$
$$
{}\quad\ \  \ D_4,\ \ 2D_4,\ D_5,\ D_6,\ D_7,\ D_8;\ \ E_6,\ \ E_7,\ \ E_8,\ \ 2E_8;
$$
$$
k=72,\ \ \ 60,\ \ \ 88,\ 102,\ 114,\ 124;\ 120,\ 165,\ 252,\ 132
$$
there exists a strongly reflective modular form $$
\Phi_{k, K}=\Phi_{12}|_{2U\oplus K \emb 2U\oplus N_K }
\in S_k(\Tilde\Orth^+(2U\oplus K ), \det)
$$
with the complete $2$-divisor where $N_K$ is the Niemeier lattice
whose root system $R$ contains $K$ as a direct summand. All  these functions
$\Phi_{k, K}$
are cusp forms.
\end{theorem}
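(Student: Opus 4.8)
The plan is to obtain each $\Phi_{k,K}$ as the quasi pull-back $\Phi_{12}|_{2U\oplus K}$ and to read off all of its asserted properties directly from Theorem \ref{Tmain}. Granting that theorem, the statement splits into three lattice-by-lattice verifications: (a) exhibiting a Niemeier lattice $N_K=N(R)$ in whose root system $R$ the lattice $K$ occurs as a direct summand (or, for $N_8$, as the saturation of such a summand); (b) checking that $K$ is primitively embedded in $N_K$ and satisfies the condition $({\rm Norm}_2)$; and (c) computing the weight $k=12+|R_{2}(K^\perp)|/2$. The cusp-form claim is then automatic: in every case the orthogonal complement still contains roots, so $R_{2}(K^\perp)\neq\emptyset$ and Theorem \ref{thm:qpb} forces $\Phi_{12}|_{T}$ into $S_k(\Tilde\Orth^+(T),\det)$.

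For step (a) I would use two standard facts about Niemeier lattices: all irreducible components of the root system $R$ share a single Coxeter number $h$, and the total number of roots equals $24h$. Thus one matches each $K$ to the Niemeier system whose components have Coxeter number equal to that of $K$: the lattices $jA_1$ (and $N_8$) come from $N(24A_1)$ with $h=2$; $jA_2$ from $N(12A_2)$ with $h=3$; $A_3,2A_3$ from $N(8A_3)$; $A_4$ from $N(6A_4)$; $A_5$ and $D_4,2D_4$ from $N(4A_5\oplus D_4)$ and $N(6D_4)$ (both $h=6$); $A_6$ from $N(4A_6)$; $A_7,D_5$ from $N(2A_7\oplus D_5)$; $D_6$ from $N(2A_9\oplus D_6)$; $D_7$ from $N(E_6\oplus D_7\oplus A_{11})$ and $E_6$ from $N(4E_6)$ (both $h=12$); $D_8$ from $N(3D_8)$; $E_7$ from $N(2E_7\oplus D_{10})$; and $E_8,2E_8$ from $N(3E_8)=E_8^{\oplus 3}$. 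For $N_8$ one takes the saturation in $N(24A_1)$ of the $8A_1$ spanned by a weight-$8$ Golay codeword, which is exactly the Nikulin lattice; by Example \ref{lattN8} it already satisfies $({\rm Norm}_2)$.

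Step (c) is then pure bookkeeping. For an irreducible root lattice $C$ one has $|R_{2}(C)|=h\cdot\rk C$, so the roots orthogonal to $K$ are exactly those in the complementary components and number $24h-h\,\rk K=h(24-\rk K)$; hence $k=12+\tfrac12 h(24-\rk K)$. Substituting the pairs $(h,\rk K)$ above reproduces the tabulated weights (for instance $E_7$ gives $12+\tfrac12\cdot 18\cdot 17=165$, and $2E_8$ gives $12+\tfrac12\cdot 30\cdot 8=132$); the only exception is $N_8$, which has $\rk=8$ but shares its $16$ roots with $8A_1$, so $|R_{2}(N_8^\perp)|=48-16=32$ and $k=28$. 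Primitivity in step (b) follows from the glue-code structure: a single component carries no glue supported on it alone, while a sum $jC$ is saturated precisely when the minimal weight of the glue code exceeds the number of glued copies involved — the one place this fails being a Golay octad, which is exactly what saturates $8A_1$ to $N_8$.

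The genuine content, and the main obstacle, is the verification of $({\rm Norm}_2)$ for the direct sums $jC$. Since the discriminant form of $jC$ is the orthogonal sum of $j$ copies of that of $C$, every class $\bar c$ has minimal norm equal to the sum of the minimal norms of its components, and one must show that whenever $\bar c^2\not\equiv 0$ modulo $2\bz$ there is a representative of norm strictly below $2$. Using the minimal norms recorded before the theorem ($(\varepsilon_i,\varepsilon_i)\le 2$ for the classes of $A_n$, $n\le 7$, with equality, hence norm $\equiv 0$, only for $A_7$; and norms $1$ and $n/4$ for $D_n$), this is where the admissible multiplicities get pinned down: $4A_1$, $3A_2$, $2A_3$, $2D_4$ are the largest sums for which no nonzero-norm class is forced up to norm $\ge 2$, while one extra copy (e.g.\ $5A_1$ with class of norm $5/2$, or $3D_4$ with class of norm $3$) violates $({\rm Norm}_2)$; the case $2E_8$ is admissible because $E_8$ is unimodular, and it is bounded by rank rather than by $({\rm Norm}_2)$ since $3E_8$ already has full rank $24$. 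This is precisely why the admissible sub-lattices are the $24$ listed. With $({\rm Norm}_2)$ and primitivity in hand, Theorem \ref{Tmain} yields that each $\Phi_{k,K}$ is strongly reflective with complete $2$-divisor and the stated weight, and Theorem \ref{thm:qpb} makes it a cusp form, completing the proof.
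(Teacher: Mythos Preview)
Your proof is correct and follows the same route as the paper: verify for each $K$ the hypotheses of Theorem~\ref{Tmain} (existence of a Niemeier host, primitivity, condition $(\mathrm{Norm}_2)$), then invoke Theorem~\ref{thm:qpb} for the cusp-form conclusion. The only minor methodological difference is the primitivity check: you argue via the minimal weight of the glue code, whereas the paper observes that the few lattices admitting a proper even overlattice ($4A_1$, $3A_2$, $2A_3$, $A_7$, $2D_4$, $D_8$) would acquire an extra $2$-root in any such extension, which is impossible inside $N(R)$ since all roots of a Niemeier lattice lie in $R$. Your treatment is more explicit than the paper's (notably the weight formula $k=12+\tfrac12 h(24-\rk K)$ and the case-by-case Niemeier assignments), but the underlying argument is the same.
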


The K\"ocher principle together with the standard  divisors argument
give the next result.

\begin{corollary}
For the lattices $K$ of Theorem \ref{thm-Phi}
the reflective modular form $\Phi_{k,K}$ is the only, up to a constant,
cusp form in $S_k(\Tilde\Orth^+(K), \det)$. In particular,
$\Phi_{k,K}$ is  a (new) eigenfunction of all Hecke operators
acting on modular forms.
\label{Hecke1}
\end{corollary}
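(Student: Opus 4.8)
The plan is to prove that $\dim S_k(\Tilde{\Orth}^+(T),\det)=1$, where $T=2U\oplus K$ as in Theorem \ref{thm-Phi}, and then to deduce the Hecke statement formally. Throughout I write $\Gamma=\Tilde{\Orth}^+(T)$ and $\Phi=\Phi_{k,K}$, so that by Theorem \ref{thm-Phi} the form $\Phi$ is a nonzero cusp form in $S_k(\Gamma,\det)$ whose divisor is the complete $2$-divisor $\divv_{\Omega(T)}\Phi=\Sum_{v\in R_2(T)/\{\pm1\}}\cD_v(T)$, each irreducible component entering with multiplicity one.

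First I would run the divisor argument. Let $F\in S_k(\Gamma,\det)$ be arbitrary. As recorded in the proof of Theorem \ref{Tmain}, for every $2$-root $v\in T$ the reflection $\sigma_v$ lies in $\Gamma$, and $\det(\sigma_v)=-1$. Since $\sigma_v$ fixes the divisor $\cD_v(T)=\Omega(v^\perp_T)$ pointwise, the transformation rule $F(\sigma_v Z)=\det(\sigma_v)F(Z)=-F(Z)$ forces $F|_{\cD_v(T)}=0$ for every such $v$. Hence $\cD_v(T)\subset\divv F$ for all $v\in R_2(T)/\{\pm1\}$, so that $\divv_{\Omega(T)}F\ge\divv_{\Omega(T)}\Phi$, where the multiplicity one in Theorem \ref{thm-Phi} is exactly what makes this inequality hold component by component. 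Consequently the quotient $g=F/\Phi$ is holomorphic on all of $\Omega(T)^\bullet$, and it is a modular form of weight $0$ and trivial character for $\Gamma$.

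Next I would invoke the Köcher principle. Because $T$ has signature $(\rk K+2,2)$ with $\rk K+2\ge 3$, the weight $0$ holomorphic modular form $g$ is automatically holomorphic at every cusp, and hence descends to a global holomorphic function on the Baily--Borel compactification of $\Gamma\backslash\Omega(T)$. This compactification is an irreducible normal projective variety, so $g$ must be constant; thus $F=g\,\Phi$ with $g\in\CC$, and $\dim S_k(\Gamma,\det)=1$. For the Hecke statement it then suffices to recall that the Hecke operators for the orthogonal group act on the finite-dimensional space $S_k(\Gamma,\det)$, commuting with the $\Gamma$-action and preserving weight and character; since this space is one-dimensional and spanned by $\Phi$, every such operator acts on it as a scalar, so $\Phi$ is simultaneously an eigenform of all of them. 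It is new in the sense that, being the unique cusp form of its weight and character, it cannot be produced from oldforms.

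The main obstacle, and the only point requiring genuine care, is the passage from ``$F$ vanishes on each $\cD_v(T)$'' to ``$\divv F\ge\divv\Phi$'': one must check the vanishing is to order at least the multiplicity (namely one) occurring in $\Phi$. This is immediate here, since a modular form carrying the $\det$ character vanishes to odd, hence positive, order along any mirror of a reflection contained in $\Gamma$, while the verifications $\sigma_v\in\Gamma$ and $\det(\sigma_v)=-1$ for $2$-roots are already part of the proof of Theorem \ref{Tmain}. Everything else is the standard weight-zero rigidity supplied by the Köcher principle together with the projectivity of the Baily--Borel compactification.
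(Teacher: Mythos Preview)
Your argument is correct and is precisely the ``K\"ocher principle together with the standard divisors argument'' that the paper invokes in lieu of a detailed proof; you have simply made explicit the two ingredients the paper names without elaboration. The key points---that $\sigma_v\in\Tilde\Orth^+(T)$ with $\det(\sigma_v)=-1$ forces any $\det$-form to vanish on every $\cD_v(T)$, that the multiplicity-one property of $\Phi$ then yields $\divv F\ge\divv\Phi$, and that the resulting weight-zero quotient is forced to be constant by K\"ocher---are exactly what the paper has in mind.
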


\noindent
\begin{remark} \label{rem1and2}
{\rm 1) The cusp forms in Corollary \ref{Hecke1}
are generalisations of the Ramanujan delta-function $\Delta(\tau)$.
The strongly reflective modular form for
$K=A_1$ is, in fact, the Igusa modular form
$\Delta_{35}\in S_{35}(\Sp_2(\ZZ))$ which is the only genus $2$ Siegel modular form
of odd weight up to a factor.
The corresponding Lorentzian Kac-Moody algebra, the algebra
with the smallest possible Cartan matrix, was defined
in \cite{GN4}. We can say that  all $2$-reflective modular forms
of Theorem \ref{thm-Phi} are of $\Delta_{35}$-type.

2) In the next section we show that all quasi pull-backs of Theorem \ref{thm-Phi}
have integral Fourier coefficients and we show how to describe their
Borcherds products and multiplicities of imaginary roots of
the corresponding Lorent\-zian Kac-Moody algebras. }
\end{remark}

\begin{proof}
Any root lattice $K$,  mentioned in Theorem \ref{thm-Phi}, satisfies
$({\rm Norm}_2)$ of Theorem \ref{Tmain}.
This follows from the description of the discriminant forms of
the irreducible root lattices given above.
Moreover there exists a Niemeier lattice $N(R)$ such that $K$
is a direct summand of the root lattice $R$.
The lattice $4A_1$, $3A_2$, $2A_3$, $A_7$, $2D_4$ and $D_8$ are not
maximal but their even extensions contain a new root which is not
possible in $N(R)$. Therefore the embedding $K\emb N(R)$ is
primitive sublattice for all root lattices $K$ of the theorem.

The lattice $N_8\emb N(24A_1)$ is the extension of $8A_1$ in
$N(24A_1)$ by one octave of the Golay code. Therefore, $N_8$ is primitive
in $N(24A_1)$ and  it satisfies the condition $({\rm Norm}_2)$.

The  quasi pull-back $\Phi_{12}|_{2U\oplus K }$
is a strongly reflective modular form with the complete
$2$-divisor according to  Theorem \ref{Tmain}.
All these functions are cusp forms according to
\cite[Theorem 8.18]{GHS4}.
\end{proof}

The last theorem gives a nice example  of two different automorphic corrections
of the same hyperbolic root system.
\begin{proposition}\label{Pr-2-forms}
The hyperbolic Kac-Moody algebra defined  by the $2$-root system
of the lattice $U\oplus D_4 $
has two different automorphic corrections, i.e. there are two
$2$-reflective modular forms with this lattice as the hyperbolic
lattice of a zero dimensional cusp.
\end{proposition}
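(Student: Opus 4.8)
The plan is to exhibit two distinct $2$-reflective modular forms, living on two non-isomorphic lattices $T$ of signature $(6,2)$, each of which has $S=U\oplus D_4$ as the hyperbolic lattice $c^\perp/c$ at a zero-dimensional cusp $c$, and each of whose complete $2$-divisor cuts out the same set of real simple roots $P(\M^{(2)}(U\oplus D_4))$. By the set-up of Section~\ref{sec:data1-5}, an automorphic correction of the hyperbolic Kac--Moody algebra $\gggg(A)$ attached to $S=U\oplus D_4$ is given by a choice of extended lattice $T=U(m)\oplus S$ together with a $2$-reflective form $\Phi$ on $\Omega(T)$; so it suffices to produce two such pairs $(T,\Phi)$ inducing the same real simple roots but differing as functions (hence differing in their imaginary roots).

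The first correction is the quasi pull-back $\Phi_{72,D_4}=\Phi_{12}|_{2U\oplus D_4}$ of Theorem~\ref{thm-Phi}, taken with $m=1$ and $T_1=2U\oplus D_4=U\oplus(U\oplus D_4)$. By Theorem~\ref{thm-Phi} this is a strongly reflective form with the complete $2$-divisor, in particular $2$-reflective; choosing a primitive isotropic vector $c$ in the first copy of $U$ gives $c^\perp/c\cong U\oplus D_4$ with $m=1$, and near this cusp its $2$-divisor produces exactly the arrangement of $2$-reflection hyperplanes of $U\oplus D_4$. Hence the real simple roots are $P(\M^{(2)}(U\oplus D_4))$, the Star $St(\Gamma(\widetilde{{\mathbb D}_4}))$ of Section~\ref{subsec:grmatel2refW}.

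For the second correction I would take $m=2$, i.e. $T_2=U(2)\oplus(U\oplus D_4)\cong U\oplus U(2)\oplus D_4$, and use the $2$-reflective form $\Phi$ to be constructed in Theorem~\ref{U(2)D4}. The key point is that the single lattice $U\oplus U(2)\oplus D_4$ carries two inequivalent zero-dimensional cusps, inequivalent because the quotients $c^\perp/c$ are non-isomorphic hyperbolic lattices: the cusp defined by an isotropic line in the standard $U$ has hyperbolic lattice $U(2)\oplus D_4$ with $m=1$ (the case treated in Theorem~\ref{U(2)D4}), whereas the cusp defined by an isotropic line $c$ inside the $U(2)$-summand has $c^\perp/c\cong U\oplus D_4$ with $m=2$. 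Since $\Phi$ is $2$-reflective on all of $\Omega(T_2)$, its divisor near the latter cusp again restricts to precisely the $2$-reflection hyperplanes of $U\oplus D_4$, so this second pair $(T_2,\Phi)$ has the same real simple roots $P(\M^{(2)}(U\oplus D_4))$ and therefore corrects the same $\gggg(A)$.

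It remains to check that the two corrections are genuinely different, and this is routine: the lattices $T_1=2U\oplus D_4$ and $T_2=U\oplus U(2)\oplus D_4$ are not isomorphic (their discriminant forms differ by the $U(2)$-summand), the two forms have different weights, and by \eqref{denidlor2} their Borcherds expansions at the respective $U\oplus D_4$-cusps assign different multiplicities to the imaginary roots while the real simple roots coincide. The main obstacle is precisely the cusp bookkeeping of the previous paragraph: one must verify that the tube realization at the $U(2)$-cusp of $T_2$ is indeed $U(2)\oplus(U\oplus D_4)$ with $m=2$, and that the $2$-reflective divisor of Theorem~\ref{U(2)D4}, expanded near that cusp, yields exactly the full arrangement $P(\M^{(2)}(U\oplus D_4))$ of Theorem~\ref{th:el2refW}, so that both forms are honest automorphic corrections of one and the same hyperbolic root system.
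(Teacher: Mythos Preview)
Your overall architecture is exactly the paper's: two extensions $T_1=2U\oplus D_4$ and $T_2=U\oplus U(2)\oplus D_4$ of $S=U\oplus D_4$, each carrying a strongly $2$-reflective form, and the first form is correctly identified as $\Phi_{72,D_4}$ from Theorem~\ref{thm-Phi}. The cusp bookkeeping you describe (the $U(2)$-cusp of $T_2$ has hyperbolic lattice $U\oplus D_4$, the $U$-cusp has $U(2)\oplus D_4$) is also correct and is precisely why the same form $\Phi_{40}$ later reappears in Theorem~\ref{U(2)D4} as a correction of $U(2)\oplus D_4$.

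The gap is in the second form. You defer its construction to Theorem~\ref{U(2)D4}, but that reference is circular: if you read the proof of Theorem~\ref{U(2)D4}, the form on $U\oplus U(2)\oplus D_4$ invoked there is $\Phi_{40,\,U(2)\oplus D_4}$, and the text says explicitly that it ``was given in Proposition~\ref{Pr-2-forms}''. The \emph{other} form in Theorem~\ref{U(2)D4}, namely $F_{24}$, does not live on $U\oplus U(2)\oplus D_4$ at all; it is built on $2U\oplus D_4$ via the duality $\Orth(2U(2)\oplus D_4)=\Orth(2U\oplus D_4)$ and is $4$-reflective there. So neither option from Theorem~\ref{U(2)D4} supplies what you need without going through the present proposition.

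What you are missing is the actual construction the paper gives in the proof of Proposition~\ref{Pr-2-forms}: the lattice isomorphism
\[
U\oplus N_8\;\cong\;U(2)\oplus D_4\oplus D_4
\]
(both sides being indefinite $2$-elementary with isomorphic discriminant forms), which yields an embedding
$U(2)\oplus U\oplus D_4\hookrightarrow 2U\oplus N_8$. One then \emph{defines}
\[
\Phi_{40,\,U\oplus U(2)\oplus D_4}=\Phi_{28,\,N_8}\big|_{U(2)\oplus U\oplus D_4\hookrightarrow 2U\oplus N_8}
\]
as a quasi pull-back of the form $\Phi_{28,N_8}$ already produced in Theorem~\ref{thm-Phi}, and the arguments of Theorem~\ref{Tmain} show it is strongly $2$-reflective of weight $40$. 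Once you insert this construction in place of the forward reference, your proof is essentially the paper's.
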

\begin{proof}
These two modular forms  are related to the different non isomorphic
extensions $U\oplus(U\oplus D_4)$ and $U(2)\oplus(U\oplus D_4 )$
of the given hyperbolic lattice.
The first Lorentzian Kac-Moody algebra is defined by  the modular form
from Theorem \ref{thm-Phi}
$$
\Phi_{72, D_4}=\Phi_{12}|_{2U\oplus D_4 \emb 2U\oplus N(6D_4 )}.
$$
To define the second  Lorentzian Kac-Moody algebra  we use the isomorphism
$$
U\oplus N_8 \cong U(2)\oplus D_4 \oplus D_4 .
$$
(These lattices are indefinite, $2$-elementary and have isomorphic  discriminant forms, see \cite{Nik2}.)
We consider the embedding
$$
U(2)\oplus U\oplus D_4 \emb \bigl(U(2)\oplus U\oplus D_4 \bigr)\oplus D_4
\cong 2U\oplus N_8 .
$$
The arguments of the proof of Theorem \ref{Tmain} show that
\begin{equation}\label{F2U(2)+D4}
\Phi_{40,\, U\oplus U(2)\oplus D_4}=
\Phi_{28}^{(N_8)}|_{U(2)\oplus U\oplus D_4 \emb 2U\oplus N_8 }
\end{equation}
is a strongly $2$-reflective modular form of weight $40$  from the space
of cusp forms
$S_{40}(\Tilde\Orth^+(U(2)\oplus U\oplus D_4 ), \det)$.
\end{proof}

In Theorem \ref{thm-Phi}, we used $23$ Niemeier lattices with non-trivial root systems.
We can construct nine strongly reflective modular forms
using the Leech lattice. The next result  is proved in  \cite{G16}.
\begin{theorem}\label{thmLeech}
For the lattice $T=2U\oplus K$, where $K$ is one
of the following nine sublattices  of the Leech
lattice $\Lambda_{24}$
$$
A_1(2),\ A_1(3),\ A_1(4),\ D_2(2)=\latt{4}\oplus\latt{4},\
A_2(2),\ A_2(3),\ A_3(2),\  D_4(2),\ E_8(2)
$$
the corresponding pull-back
for $T=2U\oplus K \emb 2U\oplus \Lambda_{24} $
$$
\Phi_{12}|_{T}=P_{12}\in M_{12}(\Tilde\Orth^+(T), \det)
$$
is a strongly reflective (non cusp)  modular  form
of weight $12$ with the complete $2$-divisor.
\end{theorem}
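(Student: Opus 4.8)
The plan is to obtain Theorem \ref{thmLeech} as the rootless specialisation of Theorem \ref{Tmain}: the hypotheses of that theorem already allow $K$ to be a primitive sublattice of the Leech lattice $N(\emptyset)=\Lambda_{24}$, so once the two structural requirements are checked for each of the nine lattices, the conclusion is immediate. Concretely, for each $K$ in the list I would verify (i) that $K$ admits a primitive embedding $K\emb\Lambda_{24}$, so that $T=2U\oplus K$ is primitive in the model $II_{26,2}=2U\oplus\Lambda_{24}$, and (ii) that $K$ satisfies the intrinsic condition $(\mathrm{Norm}_2)$. Theorem \ref{Tmain} then yields that $\Phi_{12}|_T$ is strongly reflective with the complete $2$-divisor and lies in $M_k(\Tilde\Orth^+(T),\det)$ with $k=12+|R_2(K^\perp)|/2$.

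The weight drops to exactly $12$ here, which is what makes the Leech case qualitatively different from Theorem \ref{thm-Phi}. Since $\Lambda_{24}$ contains no roots, its sublattice $K^\perp_{\Lambda_{24}}=T^\perp$ is rootless as well, whence $R_2(T^\perp)=\emptyset$ and $|R_2(K^\perp)|/2=0$, giving $k=12$. In this situation the denominator product of the quasi pull-back \eqref{qpb} is empty, so $P_{12}=\Phi_{12}|_T$ is literally the restriction $\Phi_{12}|_{\Omega(T)}$. The same rootlessness trivialises the key step in the proof of Theorem \ref{Tmain}: for a $2$-root $v\in II_{26,2}$ written $v=\alpha+\beta$ with $\alpha=\pr_{T^*}(v)$ and $\beta\in(T^\perp)^*$, the troublesome range $0<\alpha^2<2$ would force, via $(\mathrm{Norm}_2)$, an element $h+\beta\in\Lambda_{24}$ of square $2$, i.e. a root of $\Lambda_{24}$ --- which cannot exist. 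Hence only the case $\beta=0$, $v\in T$ survives, and $\divv P_{12}=\sum_{v\in R_2(T)/\{\pm1\}}\cD_v(T)$ is complete.

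For step (ii), $(\mathrm{Norm}_2)$ is a property of $K$ alone and I would check it from the discriminant-form descriptions recorded before Example \ref{lattN8}, rescaled by the relevant factor $m$. For instance, for $K=E_8(2)$ one has $K^*/K\cong E_8/2E_8$, where a class $\bar x$ has norm $x^2/2\bmod 2\ZZ$; the non-isotropic classes are exactly those containing a root of $E_8$, and such a root gives a representative of square $1<2$, so $(\mathrm{Norm}_2)$ holds. The cases $A_1(2)=\latt 4$, $A_1(3)=\latt 6$, $A_1(4)=\latt 8$, $D_2(2)$, $A_2(2)$, $A_2(3)$, $A_3(2)$, $D_4(2)$ are entirely analogous short computations in cyclic or elementary discriminant groups.

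Step (i), the primitive embeddings, is where the real work sits and is the main obstacle. For the three rank-one lattices it is elementary: any vector of square $<16$ in the even lattice $\Lambda_{24}$ (minimum $4$) is automatically primitive, since $v=ku$ with $k\ge 2$ and $u$ primitive forces $v^2=k^2u^2\ge 16$; as $\Lambda_{24}$ represents $4,6,8$, the lattices $\latt 4,\latt 6,\latt 8$ embed primitively. For the higher-rank $K$ I would invoke Nikulin's existence criterion for primitive embeddings of an even lattice of signature $(r,0)$ into an even unimodular lattice of signature $(24,0)$ --- the orthogonal complement being automatically rootless, as every sublattice of $\Lambda_{24}$ is --- or else produce the embeddings explicitly from the known scaled-root-lattice sublattices of the Leech lattice. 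Finally, non-cuspidality is the opposite regime to Theorem \ref{thm:qpb}: because $R_2(T^\perp)=\emptyset$ the cusp criterion there does not apply, and indeed the restriction $\Phi_{12}|_{\Omega(T)}$ retains a non-vanishing leading Fourier--Jacobi coefficient at the zero-dimensional (Leech-type) cusp, so $P_{12}\in M_{12}\setminus S_{12}$.
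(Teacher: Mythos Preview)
Your approach is correct and is essentially the only natural one given the paper's toolkit: Theorem~\ref{Tmain} is explicitly stated to cover primitive sublattices of $N(\emptyset)=\Lambda_{24}$, and you apply it exactly as intended. Note, however, that the paper does not actually supply its own proof of Theorem~\ref{thmLeech}; it simply cites \cite{G16} (``in preparation''). So there is no in-paper argument to compare against, and your sketch in fact fills in what the paper omits.

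Two places deserve a little more care. First, the primitive embeddings $K\emb\Lambda_{24}$ for the higher-rank cases ($A_2(2)$, $A_2(3)$, $A_3(2)$, $D_4(2)$, $E_8(2)$, $D_2(2)$) are genuine facts about the Leech lattice that need either an explicit construction or a citation; invoking ``Nikulin's existence criterion'' is not quite enough on its own, since that criterion concerns embeddings into \emph{indefinite} unimodular lattices and does not directly produce a primitive sublattice of the specific definite lattice $\Lambda_{24}$. These embeddings are, however, standard (they appear in the classical catalogues of Leech sublattices, e.g.\ in \cite{CS}), so a reference suffices. Second, your non-cuspidality argument is morally right but slightly loose: the cleanest way to say it is that at the one-dimensional cusp of $\Omega(II_{26,2})$ corresponding to $\Lambda_{24}$ the Fourier--Jacobi expansion of $\Phi_{12}$ has leading coefficient $\Delta(\tau)$ (since the Weyl vector of the Leech cusp is $(1,0,0)$ and there are no $\vartheta$-factors; compare \eqref{BNR} with $h=0$), and restricting the abelian variable $\gz$ to $K\otimes\CC\subset\Lambda_{24}\otimes\CC$ leaves this $\gz$-independent term untouched, so $P_{12}$ has non-vanishing first Fourier--Jacobi coefficient and is not a cusp form. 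The $(\mathrm{Norm}_2)$ verifications you outline are routine and correct.
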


\begin{remark} \label{remarkA1(2)A1(4)}
{\rm The reflective Siegel modular forms
corresponding to the lattices $K=A_1(2)$, $A_1(3)$ and  $A_1(4)$
were constructed in  \cite[\S 4.2]{GN4} by another method.
Theorem \ref{thmLeech} can be considered as a generalisation
of the fact indicated in \cite[\S 4.2, Remark 4.4]{GN4}. }
\end{remark}

\smallskip

The last theorem implies

\begin{corollary}\label{parabolic}
Let $K$ be one of the positive definite lattices
of  Theorem \ref{thmLeech}. Then the $2$-root system of the hyperbolic lattice
$U\oplus K $ is reflective  with a lattice Weyl vector of norm $0$,
i.e. it has parabolic type.
\end{corollary}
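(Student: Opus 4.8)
The plan is to read off the geometry of the $2$-reflection group $W^{(2)}(U\oplus K)$ from the reflective modular form produced in Theorem \ref{thmLeech}, using the dictionary between strongly reflective forms and Lorentzian Kac--Moody algebras set up in Section \ref{sec:data1-5}. First I would record the lattice bookkeeping: put $S=U\oplus K$, which is hyperbolic of signature $(\rk K+1,1)$ and rank $\ge 3$, so that the extended lattice is $T=2U\oplus K=U\oplus S$, i.e.\ we are in the situation of datum (IV) with $m=1$. Theorem \ref{thmLeech} gives a holomorphic form $P_{12}=\Phi_{12}|_T\in M_{12}(\Tilde{\Orth}^+(T),\det)$ whose divisor on $\Omega(T)$ is the complete $2$-divisor $\sum_{v\in R_2(T)/\{\pm1\}}\cD_v(T)$, each component of multiplicity one. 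Moreover, since $R_2(\Lambda_{24})=\emptyset$ forces $R_2(T^\perp)=\emptyset$, the cusp-form criterion of Theorem \ref{thm:qpb} does not apply, and Theorem \ref{thmLeech} asserts explicitly that $P_{12}$ is \emph{not} a cusp form.

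Next I would pass to the Borcherds product expansion \eqref{denidlor2} of $P_{12}$ at the zero-dimensional cusp attached to the splitting $T=U\oplus S$. By the correspondence of Section \ref{sec:data1-5} (following Borcherds and \cite{GN5}, \cite{GN6}, \cite{Nik8}), the components of the complete $2$-divisor meeting this cusp are exactly the mirrors $\Hh_\delta\subset\La^+(S)$ of the $2$-roots $\delta$ of $S$, and these are precisely the walls of a fundamental chamber $\M^{(2)}$ of the \emph{full} group $W^{(2)}(S)$; the corresponding set $P(\M^{(2)})$ of perpendicular outward $2$-roots is the system of real simple roots, and the leading exponent of the product is a vector $\rho\in S\otimes\bq$ with $\rho\cdot\delta=-1$ for all $\delta\in P(\M^{(2)})$, cf.\ \eqref{latWfor2}, that is, a lattice Weyl vector. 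Holomorphicity of $P_{12}$ at the cusp places $\rho$ in the closed cone $\overline{V^+(S)}$, so $\rho^2\le 0$; in particular $\M^{(2)}$ has finite or almost finite volume and $U\oplus K$ is reflective with a lattice Weyl vector. It remains only to pin down $\rho^2$.

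Finally I would use non-cuspidality to force $\rho^2=0$. Writing $z=x+iy$ with $y\in V^+(S)$, the leading factor of \eqref{denidlor2} is $\exp(-2\pi i(\rho,z))=\exp(2\pi(\rho,y))\exp(-2\pi i(\rho,x))$, while the remaining factors tend to $1$ as $y$ is pushed to infinity inside $V^+(S)$. If $\rho^2<0$ then $\rho\in V^+(S)$, hence $(\rho,y)\to-\infty$ along every ray and the product tends to $0$ at the cusp, making $P_{12}$ a cusp form. If instead $\rho^2=0$ then $\br_{++}\rho$ is an isotropic ray on $\partial V^+(S)$ and the bottom Fourier--Jacobi coefficient of $P_{12}$ along $\br_{++}\rho$ is a nonzero form, so $P_{12}$ does not vanish on the boundary component it determines. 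Since Theorem \ref{thmLeech} states that $P_{12}$ is not a cusp form, we conclude $\rho^2=0$ and $\rho\ne 0$; thus $\br_{++}\rho$ is a cusp at infinity of $\M^{(2)}$, the chamber has almost finite volume, and $U\oplus K$ is parabolically $2$-reflective, as claimed.

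The main obstacle is the reflective-form/automorphic-correction dictionary invoked twice above: first, that the \emph{complete} $2$-divisor forces the real simple roots to be the walls of a fundamental chamber of the \emph{full} group $W^{(2)}(S)$ together with a genuine lattice Weyl vector; and second, the precise equivalence between non-cuspidality and $\rho^2=0$, which amounts to identifying the bottom Fourier--Jacobi coefficient of $P_{12}$ along $\br_{++}\rho$ with a nonzero boundary form exactly when $\rho$ is isotropic. Both are instances of the general theory of \cite{GN5}, \cite{GN6}, \cite{Nik8}; in the rank-$3$ cases $K=A_1(2),A_1(3),A_1(4)$ one may instead check the conclusion directly against the explicit parabolic forms $\Psi_{12}^{(2)},\Psi_{12}^{(3)},\Psi_{12}^{(4)}$ recorded in Section \ref{subsec:KM1}.
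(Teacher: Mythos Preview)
Your argument is correct and is precisely the kind of elaboration the paper intends: the paper itself offers no proof beyond the phrase ``The last theorem implies'', so you are supplying the dictionary from Section~\ref{sec:data1-5} that converts the strongly $2$-reflective non-cusp form $P_{12}$ of Theorem~\ref{thmLeech} into the datum~(III) of parabolic type. The reliance on the general theory of \cite{GN5}, \cite{GN6}, \cite{Nik8} that you flag as the ``main obstacle'' is exactly what the paper is tacitly invoking.

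One remark on an alternative, more computational route that the paper's own machinery makes available. Rather than arguing indirectly via non-cuspidality, you can read off $\rho$ explicitly from the Borcherds product formulae of Section~\ref{JBP}. For $K\emb\Lambda_{24}$ the Jacobi form $\varphi_{0,\Lambda_{24}}=\vartheta_{\Lambda_{24}}/\Delta$ has $q^0$-part equal to the constant $24$ (no $r^\ell$ terms, since $R_2(\Lambda_{24})=\emptyset$), and this persists after restriction to $K$: one gets $f(0,\ell)=24\,\delta_{\ell,0}$. The formulae following \eqref{BP} then give
\[
A=\tfrac{1}{24}\cdot 24=1,\qquad B=0,\qquad C=0,
\]
so $\rho=(1,0,0)\in U\oplus K$ with $\rho^2=-2AC+(B,B)=0$. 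This bypasses the cusp-form dichotomy entirely and recovers the same conclusion; it also explains structurally why the Leech case is parabolic while the Niemeier cases of Theorem~\ref{thm-Phi} are elliptic (there $C=h(K)>0$ by \eqref{Weyl-vector}). Your non-cuspidality argument is more conceptual and generalises better; the direct computation is shorter in this specific setting.
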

We plan to consider in more details the corresponding Lorentzian Kac-Moody algebras
in a separate paper.

In this section  we constructed
$33$ reflective modular forms with respect to the groups of type
$\Tilde\Orth^+(2U\oplus K )$ and one modular form for
$\Tilde\Orth^+(U\oplus U(2)\oplus D_4 )$. (We note that in many cases
$\Tilde\Orth^+(2U\oplus K )$ is not the  maximal modular group of the reflective
modular form.)
The main theorem of \cite{GH} gives a result
on the geometric type of the corresponding modular varieties.
\begin{corollary}\label{uniruled}
For all $34$ lattices $T$ of signature $(n,2)$ from Theorem \ref{thm-Phi},
Proposition \ref{Pr-2-forms} and Theorem \ref{thmLeech}, the modular variety
$\Tilde\Orth^+(T)\setminus \Omega (T)$ is at least uniruled.
The same is true for the modular variety $\Tilde{\rm SO}^+(T)\setminus \Omega (T)$
if the rank of $T$ is odd.
\end{corollary}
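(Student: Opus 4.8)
The whole statement will follow by checking that each of the $34$ modular forms produced in Theorem \ref{thm-Phi}, Proposition \ref{Pr-2-forms} and Theorem \ref{thmLeech} satisfies the hypotheses of the main theorem of \cite{GH}, and then invoking that theorem. So the plan is first to isolate the single numerical input that result requires, and then to verify it uniformly. Recall that each form $\Phi_{k,K}$ (respectively $\Phi_{40,\,U\oplus U(2)\oplus D_4}$, respectively the Leech pull-back $P_{12}$) is a \emph{strongly reflective} modular form with the \emph{complete $2$-divisor} for a stable orthogonal group $\Tilde{\Orth}^+(T)$, of the weight $k$ listed in the corresponding theorem, on the modular variety $\mathcal F_T=\Tilde{\Orth}^+(T)\setminus\Omega(T)$ of dimension $n$, where $T$ has signature $(n,2)$ and $n=\rk K+2$.

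The quantity that controls the geometry is the comparison of the weight $k$ with the dimension $n$, and I would first record that in every one of the $34$ cases one has $k>n$. In Theorem \ref{thm-Phi} the weights ($35,34,33,32,28$ and $45,\dots,252,132$) dwarf the dimensions $n=\rk K+2\in\{3,\dots,18\}$; in Proposition \ref{Pr-2-forms} one has $k=40>6=n$; and in Theorem \ref{thmLeech} the constant weight $12$ still exceeds $n=\rk K+2$, the tightest instance being $K=E_8(2)$ with $12>10$. This inequality is exactly what drives the conclusion. Let $\lambda$ be the modular (Hodge) line bundle of weight one on a toroidal compactification $\overline{\mathcal F}_T$; it is big and nef. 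Since $\Phi$ is reflective with complete $2$-divisor, its divisor on $\Omega(T)$ is $\sum_{v\in R_{2}(T)/\{\pm1\}}\cD_v(T)$, supported precisely on the mirrors of the order-two reflections $\sigma_v$, which are the $2$-reflective part of the branch divisor $B$ of $\pi\colon\Omega(T)\to\mathcal F_T$. Hence $\divv\Phi$ realises the class $\tfrac12 B_{2}$, so that $k\lambda\sim_{\bq}\tfrac12 B_{2}$. The canonical class of $\overline{\mathcal F}_T$ is $\bq$-linearly equivalent to $n\lambda$ minus an effective combination of the branch and boundary divisors; trivializing the $2$-reflective branch part by $\Phi$ leaves $-K_{\overline{\mathcal F}_T}\sim_{\bq}(k-n)\lambda+(\text{effective})$, where the effective remainder collects the boundary and any non-$2$ branch (which only improves positivity). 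As $k>n$ and $\lambda$ is big and nef, $-K_{\overline{\mathcal F}_T}$ is big; equivalently $K_{\overline{\mathcal F}_T}$ is not pseudo-effective, and therefore $\mathcal F_T$ is uniruled, by the criterion of Boucksom--Demailly--P\u{a}un--Peternell. This chain is precisely what the main theorem of \cite{GH} packages, so the application to all $34$ lattices is immediate.

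For the second assertion, assume $\rk T=n+2$ is odd. If $-\id_T\in\Tilde{\Orth}^+(T)$ — which holds whenever the discriminant form is $2$-elementary, as for the scaled and $D$-type lattices occurring here — then $-\id_T$ acts trivially on the projective domain $\Omega(T)$, so $\Tilde{\SO}^+(T)\setminus\Omega(T)=\Tilde{\Orth}^+(T)\setminus\Omega(T)$ and there is nothing further to prove. In the remaining odd cases the natural map $\Tilde{\SO}^+(T)\setminus\Omega(T)\to\mathcal F_T$ is a double cover branched along the reflective divisors; the same form $\Phi$, being a fortiori modular for the subgroup $\Tilde{\SO}^+(T)$, still has weight $k>n$ and divisor along the ramification, so the identical canonical-bundle computation yields $-K$ big on $\Tilde{\SO}^+(T)\setminus\Omega(T)$. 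This is the odd-rank half of the statement of \cite{GH}.

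The genuine technical content — which I would regard as the main obstacle, but which is imported wholesale from \cite{GH} (building on the compactification analysis of \cite{GHS4}) — is the justification of the canonical-bundle formula on a compactification: one must control the boundary contributions and the intersections of the branch divisor with the boundary, and one must know that $\mathcal F_T$ has at worst canonical singularities so that bigness of $-K$ on a resolution genuinely forces uniruledness. In the present proof all of this is used as a black box, and the only work that remains is the hypothesis check carried out above, namely strong reflectivity with complete $2$-divisor (already established in Theorem \ref{thm-Phi}, Proposition \ref{Pr-2-forms} and Theorem \ref{thmLeech}) together with the elementary inequality $k>n$.
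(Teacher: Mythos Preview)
Your proposal is correct and follows essentially the same approach as the paper: both simply invoke the main theorem of \cite{GH}, and the paper itself offers no further argument beyond that citation. Your version is more detailed in that you explicitly verify the numerical hypothesis $k>n$ in each of the $34$ cases and sketch the canonical-bundle mechanism behind \cite{GH}, whereas the paper states the corollary as an immediate consequence.
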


\section{Jacobi type representation of Borcherds \\products
and the lattice Weyl vector}
\label{JBP}

It is known that one can consider the Kac--Weyl denominator function
of the affine Lie algebra  $\hat{\mathfrak g}(K)$ with positive part
$K$ of the root system as a product of eta- and theta-functions
(see \cite{K1}, \cite{KP}) or as a Jacobi form.
The Borcherds product of $34$ reflective modular forms
constructed  in  Sect. \ref{pullback} is equal to  the right
hand side of the Kac-Weyl-Borcherds denominator formula of the
corresponding Lorentzian Kac-Moody algebra (see Sect. \ref{sec:data1-5}).
In this section we consider a Jacobi type representation of the Borcherds
products of the reflective modular forms.
This gives a description of the multiplicities of imaginary
roots of the corresponding Lorentzian Kac-Moody algebras as Fourier
coefficients of some Jacobi forms of weight $0$.

It is hard to get explicit formulae for the Fourier expansion
of the quasi pull-backs constructed in Theorem \ref{thm-Phi},
Proposition \ref{Pr-2-forms} and Theorem \ref{thmLeech}.
In \cite{G12} we proposed twenty four  Jacobi type constructions
of the Borcherds modular form $\Phi_{12}$. This approach gives
similar formulae for the Borcherds products of all modular forms
of Sect. \ref{pullback}. In particular we give
simple explicit formulae for the first two  Fourier-Jacobi
coefficients of these reflective forms.

\begin{theorem}\label{thm-affineKM}
Let $K$ be one of  lattices  of Theorem \ref{thm-Phi}.

1) All Fourier coefficients of the reflective form
$\Phi_{k,K}$ are integral.
\smallskip

2) $\Phi_{k,K}$ has the  Borcherds product described in
(\ref{phiS}) and (\ref{BP}) below.
\smallskip

3) The lattice Weyl vector of the Lorentzian Kac-Moody algebra
with the hyperbolic $2$-root system of $U\oplus K $
and the denominator function $\Phi_{k,K}$ is given by the formula
\begin{equation}\label{Weyl-vector}
\rho_{U\oplus K }=(A,B,C)=
\bigl(1+h(K),\,
-\frac{1}{2}\sum_{v\in R_2(K)_{>0}} v,\, h(K)\bigr)
\end{equation}
where $h(K)$ is the Coxeter number of $K$
and $B=-\frac 12\sum_{v\in R_2(K)_{>0}} v$ is the direct sum of the Weyl vectors
of the irreducible components of the root system of the  positive definite lattice
$K$.
\smallskip

4) The first non-zero Fourier-Jacobi coefficient
of $\Phi_{k,K}$ has index $h(K)$ and it is equal to
$$
\eta(\tau)^{(h(K)+1)(24-\rk(K))}\cdot \eta(\tau)^{\rk K} \prod_{v\in R(K)_{>0}}
\frac{\vartheta(\tau, (v,\mathfrak{z}))}{\eta(\tau)}
$$
where $\mathfrak{z}\in K\otimes \mathbb C$,
$\vartheta(\tau,z)$ is the Jacobi theta-series defined in
(\ref{vth-product}) and (\ref{vth-sum}). The second Fourier-Jacobi coefficient
of index $h(K)+1$ is given in (\ref{BNR}).
\end{theorem}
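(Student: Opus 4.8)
The plan is to transport all four assertions through the quasi pull-back of Theorem~\ref{thm-Phi}, using the explicit Fourier--Jacobi (Jacobi-type) realizations of the Borcherds form $\Phi_{12}$ from \cite{G12} attached to the zero-dimensional cusps of $\Orth^+(II_{26,2})$. Fix the Niemeier lattice $N(R)$ whose root lattice contains $K$ as a direct summand, set $M=K^{\perp}_{N(R)}$ (so that $|R_2(M)|=2N$ and $k=12+N$), and use the model $T=2U\oplus K\emb 2U\oplus N(R)=II_{26,2}$. In the tube coordinates $(\tau,\mathfrak{z},\omega)$ of this cusp, with $q=e^{2\pi i\tau}$ and $s=e^{2\pi i\omega}$, the four statements will be read off from the behaviour of the expansions of $\Phi_{12}$ under restriction of the abelian variable $\mathfrak{z}\in N(R)\otimes\CC$ to $K\otimes\CC$ (setting the $M$-components to zero) together with division by $\prod_{v\in R_2(M)/{\pm1}}(Z,v)$. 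The splitting $R=R(K)\sqcup R_2(M)$, valid because $K$ is a direct summand, will be used throughout.

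For assertion (2) I would begin from the Borcherds product of $\Phi_{12}$ of \cite{B4}, whose multiplicities are the integer coefficients $c(n)$ of $1/\Delta(\tau)=\sum_n c(n)q^n$. Collecting these factors according to the orthogonal projection $II_{26,2}\to T$ and absorbing the divided linear forms $(Z,v)$, $v\in R_2(M)$, produces the infinite product \eqref{phiS}--\eqref{BP} for $\Phi_{k,K}$, the multiplicities of its imaginary roots appearing as Fourier coefficients of a weight-$0$ Jacobi form of this cusp (existence of a product expansion is in any case ensured by Bruinier's theorem \cite{Bru}). Assertion (1) then follows at once: the product $e^{-2\pi i(\rho,z)}\prod_{\alpha}(1-e^{-2\pi i(\alpha,z)})^{\mathrm{mult}(\alpha)}$ is a single monomial times a product of factors $(1-x)^{m}$ with integral exponents $m=\mathrm{mult}(\alpha)$, and therefore has integral Fourier coefficients; equivalently, integrality is inherited from the weight-$0$ Jacobi form occurring in (2).

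For the Weyl vector (3) I would read off the leading monomial $q^{A}\zeta^{B}s^{C}$ of the product obtained in (2). Since $(\rho,\delta)=-1$ for every $\delta\in P(\M)$ of $U\oplus K$, and these simple roots are, by the Star description \eqref{star}, the simple roots of the extended Dynkin diagrams $\widetilde{K_i}$ of the components of $K$ joined at the central node $e=-c_1+c_2$, the vector $\rho_{U\oplus K}=(A,B,C)$ is determined uniquely. Its abelian component is $B=-\tfrac12\sum_{v\in R_2(K)_{>0}}v$, the direct sum of the Weyl vectors of the irreducible components of $K$, while $C=h(K)$ and $A=1+h(K)$; the latter two are consistent with the closed form of (4), whose $q$-order is $h(K)+1$ by a short valuation count on its $\eta$- and $\vartheta$-factors.

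The heart is the first Fourier--Jacobi coefficient (4). By \cite{G12} the first Fourier--Jacobi coefficient of $\Phi_{12}$ at the $N(R)$-cusp is the affine Weyl--Kac--Macdonald denominator of the full Niemeier root system, namely, up to a nonzero constant, $\eta(\tau)^{24-12h(K)}\prod_{r\in R_{>0}}\vartheta(\tau,(r,\mathfrak{z}))$, a Jacobi form of weight $12$; here I use that every irreducible component of a Niemeier root system has the same Coxeter number $h(K)$, so that $|R|=24\,h(K)$ and $|R_2(M)|=h(K)(24-\rk K)=2N$. Splitting the $\vartheta$-product along $R=R(K)\sqcup R_2(M)$, setting $\mathfrak{z}_M=0$, and dividing by $\prod_{w\in R_2(M)_{>0}}(w,\mathfrak{z}_M)$, each vanishing factor is resolved by Jacobi's derivative formula $\vartheta(\tau,z)=2\pi\,\eta(\tau)^{3}z+O(z^{3})$, so that every positive root of $M$ contributes a factor $\eta(\tau)^{3}$, hence a total $\eta(\tau)^{3N}$. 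The surviving factor is $\eta(\tau)^{24-12h(K)+3N}\prod_{v\in R(K)_{>0}}\vartheta(\tau,(v,\mathfrak{z}))$; substituting $N=h(K)(24-\rk K)/2$ turns its $\eta$-exponent into $(h(K)+1)(24-\rk K)+\rk K-|R(K)_{>0}|$, so that the surviving factor is exactly $\eta(\tau)^{(h(K)+1)(24-\rk K)}\,\eta(\tau)^{\rk K}\prod_{v\in R(K)_{>0}}\vartheta(\tau,(v,\mathfrak{z}))/\eta(\tau)$, the expression in (4); its weight is $k=12+N$ because the affine denominator $\eta(\tau)^{\rk K}\prod_{v}\vartheta/\eta$ has weight $\rk K/2$. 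The second coefficient \eqref{BNR}, of index $h(K)+1$, follows by carrying the same expansion one further order in $s$. The step I expect to be most delicate is the Fourier--Jacobi bookkeeping: one must check that the $s$-index of this term is genuinely $h(K)$ (so that it is the \emph{first} non-zero coefficient, matching $C$), that the division cancels the zeros of the $M$-theta factors without leaving a pole, and that no earlier coefficient survives --- for which the precise \cite{G12} realization together with the holomorphy, weight, and K\"ocher-principle uniqueness arguments (as in Corollary~\ref{Hecke1}) are the relevant tools.
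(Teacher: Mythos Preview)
Your proposal is correct and, for parts (1)--(3), follows essentially the same route as the paper: one passes through the identity $\Phi_{k,K}=\cB_{\varphi_{0,K}}$ with $\varphi_{0,K}=\varphi_{0,N(R)}|_{K}$ (this is \eqref{phiS}--\eqref{BP}, taken from \cite{G12}), and then reads off integrality from the integrality of the Fourier coefficients of $\varphi_{0,K}$, and the Weyl vector from the $q^0$-part of $\varphi_{0,K}$ via the closed formulas for $A,B,C$.

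For part (4) you take a genuinely different computational route. The paper obtains the first two Fourier--Jacobi coefficients from the Hecke-operator representation $\cB_{\varphi_{0,K}}=\widetilde\psi_{K;C}\cdot\exp(-\sum_{m\ge1}m^{-1}\tilde\psi|T_-(m))$, which immediately yields \eqref{BNR} and hence the first coefficient as $\eta^{h(K)(24-\rk K)}\prod_{v>0}(\vartheta/\eta)\cdot\Delta(\tau)$. You instead start from the first Fourier--Jacobi coefficient of $\Phi_{12}$ itself at the $N(R)$-cusp, $\eta^{24-12h(K)}\prod_{r\in R_{>0}}\vartheta(\tau,(r,\mathfrak z))$, and apply the quasi pull-back directly at the Fourier--Jacobi level, resolving each $M$-factor via $\vartheta(\tau,z)=2\pi i\,\eta(\tau)^3 z+O(z^3)$. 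Your $\eta$-bookkeeping is correct: both expressions have $\eta$-exponent $24(1+h(K))-\tfrac{3}{2}h(K)\rk K$. This route is pleasantly direct for the first coefficient and makes the link to the affine denominator of $N(R)$ transparent; the paper's route has the advantage that it delivers the second coefficient \eqref{BNR} for free (your ``carry one further order in $s$'' would require tracking the next term of the $\exp$), and it also makes the computation of $(A,B,C)$ a one-line application of the general formulas rather than a cross-check against (4). The delicate point you flag---that quasi pull-back commutes with the Fourier--Jacobi expansion in $\omega$---is harmless here since the divided linear forms depend only on $\mathfrak z_M$, not on $\omega$.
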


\begin{remark} \label{remafflie}
{\rm We note that for a root lattice $K>0$
$$
\eta(\tau)^{\rk K} \prod_{v\in R(K)_{>0}}
\frac{\vartheta(\tau, (v,\mathfrak{z}))}{\eta(\tau)}
$$
is {\it the Kac--Weyl denominator function of the affine Lie algebra }
$\hat{\mathfrak g}(K)$} where $\tau\in \mathbb H^+$, $q=\exp(2\pi i \tau)$,
$$
\eta(\tau)=q^{\frac{1}{24}}\prod_{n\ge 1}(1-q^n)
$$
is the Dedekind eta-function.
\end{remark}

In order to define Fourier and Fourier-Jacobi expansions of modular forms,
we have to fix  a tube realisation of the homogeneous domain $\Omega (T)$
related to boundary components of its Baily--Borel compactification.
In this paper, we shall use automorphic forms
mainly  for the lattices $T$ of signature  $(n_0+2,2)$ of the simplest possible type
$$
T=U'\oplus (U\oplus K )=U\oplus S
$$
where $U'\cong U=\begin{pmatrix}\ 0&-1\\-1&\ 0\end{pmatrix}$ is the unimodular
hyperbolic plane
and $K$ is  a positive definite  even integral lattice of rank $n_0$
and $S=U\oplus K$ is a hyperbolic lattice of signature $(n_0+1,1)$.

Let $[\cZ]\in \Omega (T)$.
Using the basis $\latt{e',f'}_\ZZ=
U={\begin{pmatrix} 0&-1\\ -1&0\end{pmatrix}}$
we write $\cZ=z'e'+\widetilde Z+zf'$ with
$\widetilde Z\in S\otimes \CC$. We note that $z\ne 0$. (If $z=0$, the real
and imaginary parts of $\widetilde Z$ form two orthogonal vectors of negative norm
in the hyperbolic lattice $S\otimes \RR$.)
Thus $[\cZ]=[\frac{1}2(Z,Z) e'+Z+f']$.
Using the similar basis $\latt{e,f}_\ZZ=U$ of the second hyperbolic plane in $T$,
we see that
$\Omega (T)$ is isomorphic to the tube domains
\begin{equation*}\label{H(S)}
\cH({S})=\{Z\in S\otimes \CC\, |\ -(\hbox{\rm Im\,} Z,\, \hbox{\rm Im\,} Z)>0\}^+
\end{equation*}
and
\begin{multline}\label{H(S)2}
\cH({K})=\{Z=\omega e+\gz+\tau f\in S\otimes \CC\, |\,
\\
\tau,\,\omega\in\HH^+ ,\ \gz\in K\otimes \CC,\
2\,\hbox{\rm Im\,} \tau \cdot \hbox{\rm Im\,}\omega
-(\hbox{\rm Im\,} \gz,\, \hbox{\rm Im\,} \gz)_{K}>0\}.
\end{multline}
We fix the isomorphism $[{\rm{pr}}]:\cH(K)\to\Omega (T)$
defined
by the  $1$-dimensional  cusp $\latt{e',e}$ fixed above
\begin{equation*}\label{pr-Z}
Z=(\omega e+\gz+\tau f)\, {\mapsto}\,
{\rm{pr}}(Z)=(\tfrac{(Z,Z) }2 e'+\omega e+\gz+\tau f+f')\,
\mapsto\ \left[{\rm{pr}}(Z)\right].
\end{equation*}

For a primitive  isotropic vector $c\in T$ and any $a \in c^\perp_T$,
one defines the Eichler transvection
\begin{equation*}
t(c,a)\colon v\Mapsto v+(a,v)c-(c,v)a-\frac{1}{2}(a,a)(c,v)c
\in \Tilde\SO^+(T).
\end{equation*}
If  $Z\in \cH(S)$ and $l\in S=(e')^\perp_T/\ZZ e'$, then
$t(e',l)({\rm pr}[Z])={\rm pr}[Z+l]$ is a translation in $\cH(S)$.
Therefore, any modular form $F\in M_k(\Tilde\SO^+(T))$
is periodic, i.e.  $F(Z+l)=F(Z)$ for any $l\in S$.
One defines  the Fourier expansion
of $F$ at the zero-dimensional cusp $\latt{e'}$
\begin{equation}\label{F-exp}
F(Z)=\sum_{l\in S^*,\,-(l,l)\ge 0} f(l)\exp{(-2\pi i \,(l,Z))}
\end{equation}
and its Fourier-Jacobi expansion at the one-dimensional cusp $\latt{e',e}$
\begin{equation}\label{F-exp2}
F(\tau, \gz,\omega)=
\sum_{m\ge 0} \varphi_m(\tau,\gz)\exp{(2\pi i\, m\omega)}.
\end{equation}
(See a general description  of a Fourier expansion at an arbitrary cusp
in \cite[\S 2.3]{GN3} and \cite[\S 8.2-8.3]{GHS4}.)
The Fourier-Jacobi coefficients $\varphi_m(\tau,\gz)$ of
$F\in M_k(\Tilde\SO^+(T))$, where $T=2U\oplus K$, are examples of holomorphic
Jacobi forms of weight $k$ and index $m$ for the even integral lattice $K>0$.
We note that we use the positive orientation of the indices
defined by (\ref{F-exp}) in the Fourier expansion of the Jacobi forms
$\varphi_m(\tau,\gz)$.

\begin{definition} (See \cite{G2}, \cite{CG2}.)
A holomorphic (respectively, weak  or nearly holomorphic)
Jacobi form of weight
$k\in \ZZ$ and index $m\in \NN$ for an even integral positive definite lattice
$K$ is  a holomorphic function
$$
\phi: \HH\times (K\otimes \mathbb C)\to \CC
$$
satisfying the functional equations
\begin{align*}\label{def-JF}
\varphi(\frac{a\tau+b}{c\tau+d},\,\frac{\gz}{c\tau+d})&
=(c\tau+d)^{k}\exp\bigl (\pi i \frac{cm(\gz,\gz)}{c\tau+d}\bigr)
\varphi(\tau ,\,\gz ),
\\\vspace{2\jot}
\varphi(\tau,\gz+\lambda\tau+\mu)&
=\exp\bigl(-\pi i m\bigl( (\lambda,\lambda)\tau+ 2(\lambda,\gz)\bigr)\bigr)
\varphi(\tau,\gz )
\end{align*}
for any
$A=\left(\begin{smallmatrix}
a&b\\c&d\end{smallmatrix}\right)
\in{\SL}_2(\mathbb Z)$,
$\lambda,\,\mu\in K$
and having a Fourier expansion
$$
\varphi(\tau ,\,\gz )=
\sum_{n\in \ZZ,\ \ell \in K^*}
f(n,\ell)\,\exp\bigl(2\pi i (n\tau-(\ell,\gz) \bigr)
$$
where  $f(n,l)\ne 0$ implies $N_m(n,\ell):=2nm-(\ell,\ell)\ge 0$
for holomorphic, $n\ge 0$ for weak
and
$N_m(n,\ell)=2nm-(\ell,\ell)>> -\infty$
for nearly holomorphic Jacobi forms.
We denote the space of holomorphic   (reps. weak or nearly-holomorphic)
Jacobi forms by  $J_{k,K;m}\subset J^{w}_{k,K;m}\subset J^{!}_{k,K;m}$.
If $m=1$, we write simply $J_{k,K}$, etc.
\end{definition}

In \cite{G12}, we showed that any Jacobi form of weight $0$ in $J^{!}_{0,K}$
with integral Fourier coefficients defines an automorphic  Borcherds
 product which is a (meromorphic) automorphic form with respect to
$\Tilde \Orth^+(2U\oplus K )$ with a character.

A Niemeier lattice  $N(R)$ is defined by its non-empty root system
$R=R_1\oplus\dots \oplus R_m$. All components $R_i$  have  the same
Coxeter number $h(R)=h(R_i)$.
In Theorem \ref{thm-Phi} above, the lattice  $K$ is  a
direct component of $R$ and we put $h(K)=h(R)$.

We introduce the Jacobi theta-series $\vartheta_{N}$ of the even
unimodular  positive definite lattice $N=N(R)$ of rank $24$ (see \cite{G2})
$$
\vartheta_{N}(\tau, \mathfrak{z})=
\sum_{\ell\in N}\exp{\bigl(\pi i (\ell,\ell)\tau-2\pi i
(\ell, \mathfrak{z} )\bigr)}
\in J_{12,N}
$$
and  a nearly holomorphic Jacobi form of weight $0$ with integral
Fourier coefficients
\begin{equation}\label{eq-Jth}
\varphi_{0,N}(\tau, \mathfrak z)=
\frac{\vartheta_{N}(\tau, \mathfrak{z})}
{\Delta(\tau)}
=\sum_{\substack{n\ge-1,\ \ell\in N\\ 2n-\ell^2\ge -2}}
a_{N}(n,\ell)q^nr^\ell
\in J_{0,N}^{!}
\end{equation}
where $q=\exp{(2\pi i \tau)}$, $r^\ell=\exp(-2\pi i(\ell, \gz))$
and  $\Delta(\tau)=q\prod_{n\ge 1}(1-q^n)^{24}$
is the Ramanujan delta-function.
The Fourier expansion starts with
$$
\varphi_{0,N}(\tau, \mathfrak z)=
q^{-1}+24+\sum_{v\in R_2(N)}e^{2\pi i (v, \mathfrak z)}+q(\dots),
$$
where $R_2(N)=\{v\in N,\, v^2=2\}$ is the set of roots of the Niemeier lattice.
One has similar formula for the Jacobi theta-series
$\vartheta_{\Lambda_{24}}(\tau, \mathfrak{z})$ of the Leech lattice,
but, in this  case, the Fourier expansion of
$\varphi_{0,\Lambda_{24}}$  does not contain the sum with respect to the roots in $q^0$-term.

We define the pullback of $\varphi_{0,N}$ on the lattice $K\emb N=N(R)$.
In other words, we write  $\gz_N=\gz+\gz'$
with $\gz_N\in N\otimes \CC$, $\gz\in K\otimes \CC$,
$\gz'\in K^\perp_N\otimes \CC$ and we put
\begin{equation}\label{phiS}
\varphi_{0,K}(\tau, \gz)
=\varphi_{0,N}|_{K}=\varphi_{0,N}(\tau,\gz_N)|_{\gz'=0}=
\sum_{n\ge -1,\ \ell\in K^*} a_K(n,\ell)q^nr^\ell
\end{equation}
$$
{}=q^{-1}+24+h(K)(24-\rk K)+
\sum_{v\in R_2(K)}e^{2\pi i (v, \mathfrak z)}+q(\dots)
\in J^{!}_{0,K}
$$
where
$$
a_K(n,\ell)=\sum_{\substack{\ell_1\in (K^\perp_N)^*,\  \ell+\ell_1\in N\\
2n-\ell^2-\ell_1^2\ge -2}} a_{N}(n, \ell+\ell_1).
$$
We note that $h(K)(24-\rk K)=|R_2(K^\perp_N)|$ since all irreducible components
of the root system of the Niemeier lattice $N$ have the same Coxeter number.

In \cite[Theorem 3.1]{G12}, we proved that any Jacobi form of weight $0$
$$
\psi(\tau, \gz)=\sum_{n\in \ZZ,\,\ell\in K^*} f(n,\ell)q^nr^\ell
\in J^{!}_{0,K}
$$
with integral Fourier coefficients $f(n,\ell)$ with  indices $(n,\ell)$
of negative hyperbolic  norm $N(n,\ell)=2n-(\ell,\ell)<0$
determines the (meromorphic) Borcherds product
\begin{equation}\label{BP}
\cB_{\psi}(\tau, \gz, \omega)=q^{A}r^{B}s^{C}
\prod_{\substack{n,m\in \ZZ,\,\ell\in K^* \vspace {1pt} \\
(n,\ell,m)>0}} \bigl(1-q^nr^\ell s^{m}\bigr)^{f(nm,\ell)}
\end{equation}
where $Z=(\tau, \gz,\omega)\in \cH(K)\cong \Omega (T)$,
$q=\exp{(2\pi i \tau)}$, $s=\exp{(2\pi i \omega)}$
and $r^\ell=\exp{(-2\pi i (\ell, \gz))}$,
$(n,\ell,m)>0$ means  that $m>0$, or
$m=0$ and $n>0$, or $m=n=0$ and  $\ell>0$ (in the sense of
the root system in $K$) and
$$
A=\frac{1}{24}\sum_{\ell\in K^*}f(0,\ell),\ \
B=-\frac{1}{2}\sum_{\ell >0 }
f(0,\ell)\ell\in \frac{1}2K^*,\ \
$$
$$
C=\frac{1}{2\rk K}\sum_{\ell \in K^*}
f(0,\ell)(\ell,\ell).
$$
We note that for the formula of the Borcherds product given above,
we fix an ordering in $K$. A positive vector $u\in K^*$
has a positive scalar product with a fixed vector in $T\otimes \RR$
which is not orthogonal to the vectors $\ell\in K^*$ such that
$f(0,\ell)\ne 0$.
We can fix such a vector  once at a boundary component, for example
in  $2U\oplus\Lambda_{24}$.

The Borcherds product $\cB_{\psi}$ has also a Jacobi type representation
\begin{equation*}\label{eq-BHecke}
\cB_{\psi}(Z)=\widetilde\psi_{K;C}(Z)\exp{\bigl( -\sum_{m\ge 1}
m^{-1}\tilde{\psi}| T_-(m)(Z)\bigr)}
\end{equation*}
where $\tilde{\psi}(Z)=\psi(\tau,\gz)e^{2\pi i \omega}$,
$T_-(m)$ is a  Hecke operator of type (\ref{Hecke}) below,
$$
\Tilde\psi_{K;C}(Z)=
\eta(\tau)^{f(0,0)}
\prod_{\ell>0}
\biggl(\frac{\vartheta(\tau,(\ell, \gz))}
{\eta(\tau)}\biggr)^{f(0,\ell)}e^{2\pi i C\omega}
$$
and
\begin{equation}\label{vth-product}
\vartheta(\tau,z)
=q^{1/8}(\zeta^{1/2}-\zeta^{-1/2})
\prod_{n\ge 1}\,(1-q^{n}\zeta)(1-q^n\zeta^{-1})(1-q^n)
\end{equation}
is a Jacobi theta-series of characteristic $2$ with  $q=e^{2\pi i \tau}$ and
$\zeta=e^{2\pi i z}$ (see (\ref{vth-sum}) for its Fourier expansion).
In particular, we have $23$ formulae for the Borcherds
modular forms
\begin{equation}\label{Phi12}
\Phi_{12}(Z)= B_{\varphi_{0, N(R)}}(Z).
\end{equation}
See \cite{G12} for more details where  we used the signature $(2,n)$
typical in the theory of moduli spaces of $K3$ surfaces.

According to (\ref{phiS}), (\ref{BP}) and  (\ref{Phi12})
the quasi pull-back $\Phi_{k,K}$ is written as the Borcherds product defined
by $\varphi_{0,K}(\tau, \gz)$
$$
\Phi_{k,K}=\Phi_{12}|_{2U\oplus K }(\tau, \gz, \omega)=
B_{\varphi_{0,K}}(\tau, \gz, \omega)\in
S_k(\Tilde\Orth^+(2U\oplus K ), \det).
$$
All Fourier coefficients of the Borcherds product $\Phi_{k,K}$ are
integral since all Fourier coefficients  $a(n,\ell)$ of the Jacobi form
$\varphi_{0,K}$ are integral.
Moreover,  for  $\varphi_{0,K}$ the first factor   $q^{A}r^{B}s^{C}$ of
the Borcherds product $B_{\varphi_{0,K}}$ has a very simple expression.
We have
$$
(A,B,C)=
\bigl(1+h(K),\,
-\frac 12\sum_{v\in R_2(K)_{>0}} v,\  h(K)\bigr)
$$
where $B=-\frac 12\sum_{v\in R_2(K)_{>0}}v$ is the direct sum of the Weyl vectors
of the irreducible components of the root systems of $K$.

The Jacobi type representation of the Borcherds product $B_{\varphi_{0,K}}$
gives its  first two non-zero Fourier-Jacobi coefficients
of indices $h(K)$ and $h(K)+1$
\begin{equation}\label{BNR}
\cB_{\varphi_{0,K}}(\tau, \gz, \omega)=
\eta(\tau)^{h(K)(24-\rk K)} \prod_{v\in R_{2}(K)_{>0}}
\frac{\vartheta(\tau, (v,\mathfrak{z}))}
{\eta(\tau)}\,e^{2\pi i\, h(K)\,\omega}\times
\end{equation}
$$
\times \bigl(\Delta(\tau)-\vartheta_{N(R)}|_K(\tau, \mathfrak{z})\,
e^{2\pi i\,\,\omega}+ \dots\bigr)
$$
where the product is taken over all positive roots $v$ of $K$.
In order to finish the proof of Theorem \ref{thm-affineKM} we have to
use that $\Delta(\tau)=\eta(\tau)^{24}$.

\section{Reflective towers of Jacobi liftings}
\label{seriesD8}

\subsection{The  Jacobi lifting and Fourier coefficients\\ of modular forms}
\label{jacliftFourier}

In Sect. \ref{JBP}, we calculated the Borcherds products  of
the reflective modular forms of
Theorem \ref{thm-Phi}. Using the differential operators  (see \cite[\S 8]{GHS4}),
we  can write some expressions  for their Fourier coefficients
in terms of Fourier coefficients of $\Phi_{12}$ but such formulae
are not, in fact, explicit because they contain rather complicated summations.
In this section, we consider reflective modular forms
related to the seventeen  lattices
$$
\latt{-2}\oplus k\latt{2}\  (2\le k\le 9),
\quad U(2)\oplus D_4,\quad U(4)\oplus D_4,
$$
$$
\quad U(4)\oplus kA_1\  (1\le k\le 4)\quad{\rm and}\quad
U(3)\oplus kA_2\ (1\le k\le 3).
$$
The corresponding strongly $2$-reflective  modular forms
have rather simple Fourier expansions.
We construct them using the Jacobi (additive)  lifting  of holomorphic Jacobi forms.
This construction was proposed in \cite{G1}-\cite{G2}
and was extended to the Jacobi modular forms of half-integral index
in \cite{GN6} and \cite{CG2}.

We take  a holomorphic Jacobi form of integral  weight $k$ for an arbitrary
even positive definite lattice $K$
$$
\varphi_k(\tau, \gz)=
\sum_{n\in \NN,\,\ell\in K^*} f(n,\ell)q^nr^\ell
\in J_{k,K}
$$
with $f(0,0)=0$.
Then the lifting of  $\varphi_k$  is defined as
\begin{equation}\label{lift}
\Lift(\varphi_k)(\tau, \gz, \omega)=
\sum_{\substack{ n,m>0,\,\ell\in K^*\\
\vspace{0.5\jot} 2nm-(\ell,\ell)\ge 0}}\
\sum_ {d|(n,\ell,m)}
d^{k-1}f(\tfrac{nm}{d^2},\tfrac{\ell}d)\,e^{2\pi i (n\tau+(\ell,\gz)+m\omega)}
\end{equation}
where $d|(n,\ell,m)$ denotes a positive integral  divisor of the vector
in $U\oplus K^* $.
Then
$$
\Lift(\varphi_k)\in M_k(\Tilde\Orth^+(2U\oplus K )).
$$
We note that {\it the Fourier coefficients of the lifting are integral if
all Fourier coefficients of the holomorphic Jacobi form $\varphi_k$ are integral}.
There is a natural  sufficient condition in terms of the discriminant form
of the lattice $K$ (see \cite[Theorem 4.2]{GHS1})
which implies that the lifting  of a Jacobi cusp form is a cusp form.
In the last case {\it the norm of the Weyl
vector of the automorphic correction will be automatically positive}.

\begin{example}
{\bf ${\mathbf 1}$-reflective modular form of singular weight for \linebreak
$2U\oplus D_8 $.}
{\rm (See \cite{G10} and \cite{CG2}.)
We can consider  the Jacobi theta-series (\ref{vth-product}) having the following
Fourier expansion
\begin{equation}\label{vth-sum}
\vartheta(z)=\vartheta(\tau,z)
=\sum_{m\in \ZZ}\,\biggl(\frac{-4}{m}\biggr)\, q^{{m^2}/8}\,\zeta^{{m}/2}
\in J_{\frac 12, \frac 12}(v_\eta^3\times v_H),
\end{equation}
as a Jacobi form of weight $\frac12$ and index $\frac12$.
In the last formula $\bigl(\frac{-4}{m}\bigr)$ denotes the quadratic  Kronecker symbol.
The Jacobi forms of half-integral indices
were introduced in \cite{GN6}.
(See \cite{CG2} for the lattice case.)

We define the following Jacobi form of singular weight $4$ for $D_8$
$$
\vartheta_{D_8}(\tau, \gz)=\vartheta(z_1)\cdot\ldots\cdot \vartheta(z_8)
\in J_{4,D_8}
$$
where $\gz=(z_1,\dots,z_8)\in D_8\otimes \CC$ where the coordinates
$z_i$  correspond to the euclidean basis of  the model (\ref{Dn}) of $D_n$.
For any $\ell\in D_8^*\subset \frac{1}2\ZZ^8$, we put
$\bigl(\frac{-4}{2\ell}\bigr)=\prod_{i=1}^8 \bigl(\frac{-4}{2l_i}\bigr)$.
Then
\begin{equation}
\vartheta_{D_8}(\tau, \gz)
=\sum_{\ell\in \frac{1}2\ZZ^8}\biggl(\frac{-4}{2\ell}\biggr)
\exp{\bigl(\pi i( (\ell, \ell)\tau+2(\ell, \gz))\bigr)}.
\end{equation}
According to (\ref{lift}), we have
$$
\Lift(\vartheta_{D_8})=\sum_{\substack{
 n,\,m\in \NN,\, \ell\in \frac 12\ZZ^8\\
\vspace{0.5\jot}  2nm-(\ell,\ell)=0}}
\ \sum_{d|(n,\,\ell,\,m)}
d^3 \biggl(\frac{-4}{2\ell/d}\biggr)
e^{2\pi i (n\tau+ (\ell,\gz)+m\omega))}
$$
where $d$ is a divisor of $(n,\ell,m)$ in $U\oplus D_8^* $.
(See a more detailed formula in  \cite[(17)]{G10}.)
This lifting is invariant with respect to the permutations
of $z_1,\dots,z_8$ and anti-invariant with respect to
the reflections $\sigma_{e_i}$. Therefore,
\begin{equation}\label{DeltaD8}
\Delta_{4, D_8}=\Lift(\vartheta(z_1)\cdot \ldots\cdot
\vartheta(z_8))\in M_4(\Orth^+(2U\oplus D_8 ),  \chi_2)
\end{equation}
where $\chi_2$ is a character of order $2$. It was proved
in \cite{G10} that $\Delta_{4, D_8}$ is
{\it strongly reflective with the divisor determined by all
$1$-reflections in $2U\oplus D_8^* $.}
In fact, $\Delta_{4, D_8}$ gives a simple  additive construction
of the Borcherds-Enriques modular form
$\Phi_4^{(BE)}\in M_4(\Orth^+(U\oplus U(2)\oplus E_8(2)), \chi_2)$
from \cite{B5}. Moreover, this automatically  gives the explicit
description of the character.

We have the following Jacobi type Borcherds product
$\Delta_{4, D_8}=B_{\varphi_{0,D_8}}$ (see \cite[\S 3]{G10}
and (\ref{BP}) above) with
\begin{equation}\label{phi0D8}
\varphi_{0,D_8}=2^{-1}(\vartheta_{D_8}|T_{-}(2))/{\vartheta_{D_8}}=
r_1+r_1^{-1}+\dots +r_8+r_8^{-1}+8+q(\dots)
\end{equation}
$$
=8\prod_{i=1}^8
\frac{\vartheta(2\tau,2z_i)}{\vartheta(\tau,z_i)}
+\frac{1}2
\prod_{i=1}^8
\frac{\vartheta(\frac{\tau}2,z_i)}{\vartheta(\tau,z_i)}
+\frac{1}2
\prod_{i=1}^8
\frac{\vartheta(\frac{\tau+1}2,z_i)}{\vartheta(\tau,z_i)}
$$
where $r_i=\exp(2\pi i z_i)$. We recall that for a Jacobi form of weight $k$ we have
by definition
\begin{equation}\label{Hecke}
\psi_{k}|T_{-}(m)(\tau,\gz)=
\sum_{\substack{ad=m\\ b\mod d}}
\ a^{k}\psi_{k}\bigl(\frac{a\tau+b}d,\  a\gz\bigr).
\end{equation}
}
\end{example}

In the rest of the section, we analyse the towers of strongly reflective
modular forms based on three modular forms of singular weight
for the lattices $2U\oplus D_8 $,  $2U\oplus 4A_1 $
and $2U\oplus 3A_2 $ constructed in \cite{G10}.
\medskip

\subsection{The singular modular form for  $2U\oplus D_8 $ and the elliptic
$2$-reflective lattices $\latt{-2}\oplus k\latt{2}$, $2\le k\le 8$.}
\label{susec:2UD_8}

{\it In this subsection  we construct a tower of eight reflective modular forms}.
In Theorem \ref{th:el2refW} of Section \ref{sec:data1-3},
we have  the series of $2$-reflective
lattices $\latt{-2}\oplus k\latt{2}$ where $2\le k\le 8$.
The corresponding automorphic corrections will be defined by
the reflective modular forms with respect to the orthogonal groups
of $U(2)\oplus \latt{-2}\oplus k\latt{2}$.

\begin{theorem}\label{LiftD}
The automorphic correction of the $2$-root system of \newline
$\latt{-2}\oplus (k+1)\latt{2}$ ($1\le k\le 7$) is
defined by
$U(2)\oplus \latt{-2}\oplus (k+1)\latt{2}$ ($1\le k\le 7$)
and by the modular form
\begin{equation*}\label{DeltaDk}
\Delta_{12-k, D_k}= \Lift(\psi_{12-k,\,D_k})\in
S_{12-k}(
{\Orth}^+(U(2)\oplus \latt{-2}\oplus (k+1)\latt{2}), \chi_2)
\end{equation*}
where
\begin{equation}\label{thetaDk}
\psi_{12-k,\,D_k}(\tau, \gz)=
\eta(\tau)^{24-3k}\ \vartheta(z_1)\cdot \ldots\cdot
\vartheta(z_k)\quad(2\le k\le 7)
\end{equation}
and
$$
\psi_{11,\,D_1}=\eta(\tau)^{21}\ \vartheta(2z).
$$
\end{theorem}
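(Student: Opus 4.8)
The plan is to realise $\Delta_{12-k,D_k}$ as the additive (Jacobi) lift of an explicitly given holomorphic Jacobi cusp form and then to read off its divisor. First I would check that $\psi_{12-k,D_k}=\eta^{24-3k}\,\vartheta(z_1)\cdots\vartheta(z_k)$ is a holomorphic Jacobi cusp form of weight $12-k$ for $D_k$ with a binary character. Indeed, $\vartheta(z)\in J_{1/2,1/2}(v_\eta^3\times v_H)$ by \eqref{vth-sum}, so the product $\vartheta(z_1)\cdots\vartheta(z_k)$ has weight $k/2$ and elliptic multiplier $v_\eta^{3k}$ times the product of the binary factors, while $\eta^{24-3k}$ has weight $(24-3k)/2$ and multiplier $v_\eta^{24-3k}$. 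The weights add to $12-k$ and the elliptic multipliers multiply to $v_\eta^{24}=1$, leaving exactly the order-two character $\chi_2$ produced by the reflections $\sigma_{e_i}$. Because $24-3k>0$ for $1\le k\le 7$, the factor $\eta^{24-3k}$ vanishes at the cusp, so $\psi_{12-k,D_k}$ is a cusp form with vanishing $(0,0)$-coefficient, which is precisely the input required by \eqref{lift}. The case $k=1$ is handled by the variant $\psi_{11,D_1}=\eta^{21}\vartheta(2z)$, the argument $2z$ accounting for $D_1=\langle 4\rangle$.

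Next I would apply the additive lift, and here is the first point demanding care: the target lattice $U(2)\oplus\langle -2\rangle\oplus(k+1)A_1$ has only even inner products and therefore contains no unimodular hyperbolic plane, so the bare $2U\oplus K$ lift of \eqref{lift} does not apply verbatim. Instead I would use the lift attached to a $U(2)$-cusp for Jacobi forms of half-integral index developed in \cite{GN6} and \cite{CG2}, with index lattice $D_k$; this is exactly the mechanism already used in \cite{G10} for the top form $\Delta_{4,D_8}$ of \eqref{DeltaD8}. It produces a holomorphic form $\Delta_{12-k,D_k}=\Lift(\psi_{12-k,D_k})$ of weight $12-k$ and character $\chi_2$ for $\Orth^+(U(2)\oplus\langle -2\rangle\oplus(k+1)A_1)$. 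Integrality of its Fourier coefficients is automatic from that of $\psi_{12-k,D_k}$, and cuspidality together with the criterion of \cite[Theorem 4.2]{GHS1} forces $\Delta_{12-k,D_k}$ to be a cusp form, so by the remark in \S\ref{jacliftFourier} the resulting Weyl vector has positive norm.

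It then remains to prove that $\Delta_{12-k,D_k}$ is strongly reflective with the complete $2$-divisor in the sense of Definition \ref{reflform}. Since each $\vartheta(z_i)$ is odd in $z_i$ with a simple zero at $z_i=0$, the lift is anti-invariant under every $2$-reflection $\sigma_v$, $v^2=2$, and vanishes to order exactly one on the mirror $\cD_v$. To see that the divisor is the \emph{complete} $2$-divisor $\sum_{v\in R_2(T)/\pm 1}\cD_v(T)$, and not merely contained in the reflective locus, I would transport the property down the tower from $\Delta_{4,D_8}$: each step $D_{k+1}\rightsquigarrow D_k$ is a quasi pull-back to the hyperplane $z_{k+1}=0$ divided by $\vartheta(z_{k+1})$, the compensating factor $\eta^3$ being furnished by the Jacobi derivative identity $\partial_{z}\vartheta(\tau,0)=c\,\eta(\tau)^3$. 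Matching the Fourier expansion of $\Delta_{12-k,D_k}$ at the zero-dimensional cusp against the denominator identity \eqref{denidlor1}, with simple real roots $P(\M^{(2)})$ and lattice Weyl vector $\rho$ of $\langle -2\rangle\oplus(k+1)A_1$ as computed in \S\ref{subsec:grmatel2refW}, then exhibits $\Delta_{12-k,D_k}$ as the data (IV)--(V) completing the automorphic correction.

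The hardest part will be the two points packaged above: setting up the half-integral-index $U(2)$-lift so that its output genuinely carries the group $\Orth^+(U(2)\oplus\langle -2\rangle\oplus(k+1)A_1)$, rather than the a priori different — and for odd $k$ even non-commensurable — lattice $2U\oplus D_k$; and proving that every $2$-root of $T$ contributes to the divisor with multiplicity exactly one. The tower organisation, anchored at the singular-weight form $\Delta_{4,D_8}$, is precisely what makes the second point tractable.
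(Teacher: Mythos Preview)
Your outline is right in spirit --- the tower from $\Delta_{4,D_8}$, the Jacobi-form bookkeeping, and the quasi pull-back mechanism $z_{k+1}\to 0$ with $\partial_z\vartheta|_{z=0}\sim\eta^3$ are all what the paper uses. But you have missed the one idea that dissolves precisely what you flag as ``the hardest part'': the duality trick of Lemma~\ref{8A1}. Since $\langle-2\rangle\oplus(k+1)\langle2\rangle\cong U(2)\oplus k\langle2\rangle$, one has $T^*(2)\cong 2U\oplus k\langle1\rangle$ for $T=U(2)\oplus\langle-2\rangle\oplus(k+1)\langle2\rangle$, and the maximal even sublattice of $2U\oplus k\langle1\rangle$ is $2U\oplus D_k$. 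Hence $\Orth^+(T)=\Orth^+(2U\oplus D_k)$ canonically (for $k\ne4$; index two for $k=4$), and under this identification the $2$-reflections of $T$ are exactly the $1$-reflections (equivalently $4$-reflections) of $2U\oplus D_k$, namely the orbit of $e_i$. So one lifts $\psi_{12-k,D_k}$ by the \emph{ordinary} additive lift \eqref{lift} into $S_{12-k}(\widetilde\Orth^+(2U\oplus D_k))$, checks that the form is anti-invariant under $z_i\mapsto -z_i$ and invariant under permutations (so it extends to the full $\Orth^+$ with character $\chi_2$), and then simply reinterprets the result via the group identification. No half-integral-index $U(2)$-cusp lift is needed; your worry that the two lattices are ``non-commensurable'' is true of the lattices but irrelevant to the groups, which literally coincide on the common rational quadratic space.

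Your reading of \eqref{DeltaD8} is also off: $\Delta_{4,D_8}$ is constructed there as a form on $2U\oplus D_8$, not on a $U(2)$-lattice; the identification with $\Phi_4^{(BE)}$ on $U\oplus U(2)\oplus E_8(2)$ is again an instance of the same duality. Once you work on the $2U\oplus D_k$ side, the divisor computation is immediate: the lift vanishes simply on the single $\widetilde\Orth^+$-orbit of $1$-vectors $\{z_i=0\}$ (Lemma~\ref{8A1}(2)), and under the duality this is exactly the complete $2$-divisor of $T$. Finally, a small correction: anti-invariance under $\sigma_{e_i}$ is \emph{not} anti-invariance under the $2$-reflections of $2U\oplus D_k$ (those are permutations and pairwise sign changes, under which the product of thetas is invariant); it is anti-invariance under the $4$-reflections, which is what you actually want.
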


\begin{remark} \label{Delta11,D1}
{\rm We want to remark about $\Delta_{11,\,D_1}$.
The last case of $\Delta_{12-k, D_k}$ for $k=1$  is special
because $D_1=\latt{4}$. One gets it as a degeneration of $D_2$.
The function
$\Delta_{11, D_1}=\Lift(\eta(\tau)^{21}\vartheta(\tau, 2z))$
is one the basic reflective Siegel modular forms
$\Delta_{11}\in S_{11}(\Gamma_2)$
with respect to the paramodular group $\Gamma_2$ in the classification
of Lorentzian Kac-Moody algebras of rank $3$ in \cite{GN6}, \cite{GN8}.
Therefore, the $D_8$-tower of reflective modular forms
considered in this subsection starts with the reflective  Siegel form
$\Delta_{11}$. }
\end{remark}

In this subsection, we show how to calculate the Fourier expansions
of the reflective modular forms from Theorem \ref{LiftD},
and propose three ways to write the Borcherds products of them.
In the next lemma, we describe a  trick, {\it the duality argument},
which is very useful for our considerations.

\begin{lemma}\label{8A1} Let  $1\le k\le 8$.

1) The next three  groups are canonically isomorphic if $k\ne 4$
\begin{equation*}
\Orth(\latt{-2}\oplus (k+1)\latt{2})=
\Orth(U\oplus k\latt{1})=
\Orth(U\oplus D_k ).
\end{equation*}
For $k=4$, $\Orth(\latt{-2}\oplus 5\latt{2})$
is isomorphic to a double extension of
$\Tilde\Orth^+(U\oplus D_4 )$.

2) The reflections with respect to  $2$-vectors of $\latt{-2}\oplus (k+1)\latt{2}$
correspond to the reflections with respect to $4$-reflective vectors of
$U\oplus D_k $ or $1$-vectors of $U\oplus D_k^* $.
If $k\ne 4$,  then  all $1$-reflective vectors
of $2U\oplus D_k^* $ belong to the unique
$\Tilde\Orth^+(2U\oplus D_k )$-orbit which is equal
to the set of  $1$-vectors in $2U\oplus k\latt{1}$.

3) If $k=4$, then there are three such
$\Tilde\Orth^+(2U\oplus D_4 )$-orbits, and  one  of them coincides
with the  $1$-vectors in $2U\oplus k\latt{1}$.
\end{lemma}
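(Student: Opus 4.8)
The plan is to derive all three parts from a single \emph{rescaling--duality} chain, combined with the classification of indefinite unimodular lattices and Eichler's criterion. First I would record the basic observation that $\latt{-2}\oplus(k+1)\latt{2}=I_{k+1,1}(2)$, where $I_{k+1,1}=\latt{-1}\oplus(k+1)\latt{1}$ is the odd unimodular lattice of signature $(k+1,1)$: multiplying every norm by $2$ carries $\latt{-1}\oplus(k+1)\latt{1}$ onto $\latt{-2}\oplus(k+1)\latt{2}$. Since rescaling does not change the automorphism group, $\Orth(\latt{-2}\oplus(k+1)\latt{2})=\Orth(I_{k+1,1})$. Then I would invoke the classification of indefinite odd unimodular lattices (determined by rank and signature) to identify $I_{k+1,1}\cong U\oplus k\latt{1}$; concretely $U\oplus\latt{1}\cong 2\latt{1}\oplus\latt{-1}$, and adding $k-1$ further copies of $\latt{1}$ gives the isometry. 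This yields the first equality $\Orth(\latt{-2}\oplus(k+1)\latt{2})=\Orth(U\oplus k\latt{1})$.

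For the second equality I would use that $U\oplus D_k$ is the maximal even sublattice of the odd lattice $U\oplus k\latt{1}=U\oplus\ZZ^k$, with $D_k\subset\ZZ^k\subset D_k^*$. The maximal even sublattice is characteristic, so restriction gives an injection $\Orth(U\oplus k\latt{1})\hookrightarrow\Orth(U\oplus D_k)$; conversely an isometry of $U\oplus D_k$ extends to $U\oplus k\latt{1}$ iff it preserves the coset $[e_k]\in D_k^*/D_k$ that cuts out $\ZZ^k=D_k\cup(D_k+e_k)$. Here I would compute the discriminant form: among the nontrivial classes of $D_k^*/D_k$, the class $[e_k]$ has a representative of norm $1\bmod 2\ZZ$, while the two spinor classes $[(e_1+\dots\pm e_k)/2]$ have norm $k/4\bmod 2\ZZ$. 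For $k\ne 4$ (within $1\le k\le 8$) one has $k/4\not\equiv 1$, so $[e_k]$ is the \emph{unique} class of its norm and is fixed by all of $\Orth(D_k)$ (the Weyl group acts trivially on the discriminant, and the outer/sign symmetries only interchange the two spinor classes); hence every isometry preserves $\ZZ^k$ and the injection is onto. For $k=4$ the three classes all have the same norm and triality $S_3$ permutes them, so $\Orth(U\oplus 4\latt{1})$ is the stabiliser of $[e_4]$, an index-$3$ condition on the triality action; tracking the two remaining $\ZZ/2$'s (namely $\pm\id$ and the orientation class $\Orth/\Orth^+$, both lying in the kernel of the action on $q_{D_4}$) identifies $\Orth(\latt{-2}\oplus 5\latt{2})$ with the asserted double extension of $\Tilde{\Orth}^+(U\oplus D_4)$.

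Part 2 then follows by tracking a reflection wall through this chain. A $2$-vector $v$ of $\latt{-2}\oplus(k+1)\latt{2}$ rescales to a norm-$1$ vector $u$ of $I_{k+1,1}\cong U\oplus k\latt{1}$; all such $u$ are $\Orth$-equivalent to a coordinate vector $e_i\in\ZZ^k$, which lies in $D_k^*$ with $e_i^2=1$, while $2e_i\in D_k$ is a root of square $4$ with $\sigma_{2e_i}=\sigma_{e_i}$. Thus the $2$-reflection walls of $\latt{-2}\oplus(k+1)\latt{2}$ are exactly the walls of the norm-$1$ vectors of $U\oplus D_k^*$, equivalently of the $4$-roots of $U\oplus D_k$. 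For the orbit statement I would pass to the signature $(k+2,2)$ lattice $T=2U\oplus D_k$ and apply Eichler's criterion: because $T$ contains two orthogonal hyperbolic planes, the $\Tilde{\Orth}^+(T)$-orbit of a primitive vector of $T^*=2U\oplus D_k^*$ is determined by its square and its image in $T^*/T=D_k^*/D_k$. Every norm-$1$ vector of $T^*$ has even $2U$-component, so its $D_k^*$-part sits in an odd-norm class; for $k\ne 4$ this forces the class $[e_k]$, whence all norm-$1$ vectors form a single orbit, and this orbit is precisely the set of norm-$1$ vectors of the overlattice $2U\oplus k\latt{1}$, i.e.\ the $1$-vectors of $2U\oplus k\latt{1}$. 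For $k=4$ (Part 3) the three nontrivial classes all contain norm-$1$ vectors, so Eichler's criterion yields exactly three $\Tilde{\Orth}^+(2U\oplus D_4)$-orbits, permuted by triality, of which the $[e_4]$-orbit is the set of $1$-vectors of $2U\oplus 4\latt{1}$.

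The main obstacle I anticipate is the $k=4$ case: making the phrase ``double extension'' precise requires carefully separating the three sources of symmetry on $U\oplus D_4$ --- the triality action on $q_{D_4}$, the central $\pm\id$, and the orientation sign --- and checking which survive after restricting to the stabiliser of $\ZZ^4$. The discriminant-form computation for $D_k$ and the verification that $\sigma_v$ genuinely lies in $\Tilde{\Orth}^+$ for the relevant $v\in D_k^*$ are the routine but essential supporting steps, and invoking Eichler's criterion in the form valid for vectors of the dual lattice $T^*$ is the key technical input for Parts 2 and 3.
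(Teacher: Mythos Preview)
Your approach is essentially the same as the paper's: rescale/dualise to pass from $\latt{-2}\oplus(k+1)\latt{2}$ to $U\oplus k\latt{1}$, embed into $U\oplus D_k$ via the maximal even sublattice, and then use the discriminant form plus Eichler's criterion to control orbits. The paper reaches $U\oplus k\latt{1}$ through the explicit isomorphism $\latt{-2}\oplus(k+1)\latt{2}\cong U(2)\oplus k\latt{2}$ followed by $M\mapsto M^*(2)$, while you go directly via $I_{k+1,1}(2)$ and the classification of odd indefinite unimodular lattices; these are the same move packaged differently.

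One correction in your $k=4$ analysis: the two $\ZZ/2$'s making up the ``double extension'' $\Orth(U\oplus\ZZ^4)/\Tilde\Orth^+(U\oplus D_4)$ are \emph{not} both in the kernel of the action on $q_{D_4}$. One of them is indeed $\{\pm\id\}=\Orth/\Orth^+$ (these coincide in signature $(5,1)$, so you are double--counting when you list them separately). The other $\ZZ/2$ is the stabiliser of $[e_4]$ inside $\Orth(q_{D_4})\cong S_3$, i.e.\ the transposition swapping the two spinor classes, realised for instance by the reflection $\sigma_{e_4}$; this element acts \emph{nontrivially} on $q_{D_4}$. With that fix your index count $2\times 2=4$ is correct and matches the paper's (brief) treatment.
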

\begin{proof}
Let $M$ be an integral quadratic lattice. Then
\begin{equation}\label{M*(n)}
\Orth(M)=\Orth(M^*)= \Orth(M^*(n)), \qquad n\in \NN.
\end{equation}
By $M^*(n)$ we denote  the renormalisation by the factor $n$
of the quadratic form of the dual lattice $M^*$.
If $M$ is odd, we denote by $M_{ev}$ the maximal even sublattice
of $M$. Then $\Orth(M)$ can be considered as  a subgroup
of $\Orth(M_{ev})$.
The following isomorphism is valid
$$
\latt{-2}\oplus (k+1)\latt{2}\cong
U(2)\oplus k\latt{2}, \qquad k\ge 1,
$$
since for $k=1$ one has
$
\latt a\oplus \latt b\oplus \latt c
=\latt{a+b, a+c}\oplus \latt{a+b+c}
$
where $a^2=-2$ and $b^2=c^2=2$.
Thus, for $M=\latt{-2}\oplus (k+1)\latt{2}$ we get
$$
\Orth(M)=
\Orth(M^*(2))= \Orth(U\oplus k\latt{1})
\subset \Orth(U\oplus D_k )
$$
because $D_k$ is the maximal even sublattice of $k\latt 1$.
For the discriminant group we have
$$
D_k^*/D_k\cong
\begin{cases}\ZZ/2\ZZ\times \ZZ/2\ZZ
\ &{\rm if}\ k\equiv 0 \mod 2,\\
\ZZ/4\ZZ\ &{\rm if}\ k\equiv 1 \mod 2
\end{cases}
$$
and  $D_k$ for $k\le 8$
has the unique  extension to the odd unimodular lattice $\ZZ^k$
if and only if $k\ne 4$.
This proves the first isomorphism of the lemma.
The renormalisation $M^*(2)$
explains the relation between the reflective vectors of the lattices.

Let us assume that $u\in M$,  $u^2=\pm 4$ and
$\sigma_u\in O(M)$. In this case, $v=u/2\in M^*$.
The $1$-vectors $v$ in $2U\oplus D_k^* $ are classified
by their images in the discriminant group $D_k^*/D_k$
(see the Eichler criterion in \cite{G2} and \cite{GHS4}).
It follows that there exists only one
$\Tilde\Orth^+(2U\oplus D_k )$-orbit of such vectors
if  $k\ne 4$.

If $k=4$, then all classes $e_1$, $(e_1+e_2+e_3\pm e_4)/2 \mod D_4$
of the discriminant group contain $1$-vector. This gives three orbits.
A permutation of $e_i$ and $(e_1+e_2+e_3\pm e_4)/2$ could be realised
in $\Orth(2U\oplus 4\latt{1})$, and the reflection $\sigma_4$
permutes the last two vectors.
\end{proof}

According to the last lemma, the problem of construction
of  automorphic corrections of the hyperbolic root systems
$\latt{-2}\oplus (k+1)\latt{2}$, $1\le k\le 8$ from Theorem \ref{th:el2refW}
is reduces to construction of a $1$-reflective (or equivalently
$4$-reflective) modular form for  the lattice $U\oplus D_k$
which
vanishes along the   walls  of all reflections $\sigma_v$
for $v\in 2U\oplus D_k^* $, $v^2=1$,
$v\equiv e_1 \mod 2U\oplus D_k $.
We obtain the $D_8$-tower of the  reflective modular forms by taking
the  quasi pull-backs of $\Delta_{4,D_8}$ (see (\ref{DeltaD8}))
for  $D_k\oplus D_{8-k}\emb D_8$.
\smallskip

For  $k=9$,  the lattice $\latt{-2}\oplus 9\latt{2}$
{\it is a $2$-reflective  lattice of parabolic type with a lattice Weyl vector
of norm zero,
and $\Delta_{4,D_8}$ is its automorphic correction}.
\smallskip

We put $\gz_k=\sum_{i=1}^k z_ie_i\in D_k\otimes \CC$, $2\le k\le 8$
(see (\ref{Dn})).
A cusp form of the  $D_8$-tower is
the quasi pull-back of $\Delta_{4,D_8}=\Phi_4^{(BE)}$ for $z_k=\dots = z_8=0$
$$
\Delta_{12-k,D_k}=\Delta_{4,D_8}|_{z_k=\dots =z_8=0}.
$$
All of them are $1$-reflective modular forms,
and
$$
\Delta_{12-k, D_k}=
\Lift(\eta(\tau)^{24-3k}\ \vartheta(z_1)\cdot \ldots\cdot
\vartheta(z_k))\in S_{12-k}({\Tilde\Orth}^+(2U\oplus D_k ))
$$
(see \cite[\S 3]{G10}).
If $k\ne 4$, then  there is an extension of order $2$
$$
\Tilde{\Orth}^+(2U\oplus D_k )\subset {\Orth}^+(2U\oplus D_k ).
$$
$\Delta_{12-k,D_k}(\tau, z_1,\dots,z_k,\omega)$
is invariant with respect to the permutations of the variables
$(z_1,\dots,z_k)$, and anti-invariant with respect to
the reflection $z_i\to -z_i$. It follows that
\begin{equation}\label{DeltaDk2}
\Delta_{12-k, D_k}
\in S_{12-k}({\Orth}^+(2U\oplus D_k ), \chi_2)
\end{equation}
where   $\chi_2: {\Orth}^+(2U\oplus D_k)\to \{\pm 1\}$
is defined by the relation
$$
\chi(g)=1\  \Leftrightarrow\
g|_{A_{2U\oplus D_k}}=\id.
$$
The form $\Delta_{12-k, D_k}(\tau,\gz_k,\omega)$ ($2\le k\le 8$)
vanishes on $z_i=0$ ($1\le i\le k$) as the lifting of the product
of the  Jacobi theta-series
by a power of $\eta(\tau)$.
This is exactly the union of walls which we are looking for.
The Borcherds product of $\Delta_{12-k, D_k}$ is defined by a Jacobi form
which can be written  in two ways, as
$$
\varphi_{0,D_k}=\varphi_{0,D_8}|_{z_{k+1}=\dots=z_8=0}=
r_1+r_1^{-1}+\dots +r_k^{-1}+(24-2k)+q(\dots)
$$
or using the quotient $2^{-1}(\psi_{12-k,\,D_k}|T_{-}(2))/\psi_{12-k,\,D_k}$
(see (\ref{phi0D8})).
In the proof of Proposition \ref{DeltaD7-Phi} below,
we give  the third formula for  the same Jacobi form.

According to Lemma \ref{8A1},
we have
\begin{multline*}\label{Delta8A1}
\Delta_{12-k, D_k}(\tau,z_1,\dots,z_k,\omega)
\in S_{12-k}({\Orth}^+(2U\oplus k\latt{1}),\chi_2)=\\
S_{12-k}({\Orth}^+(U(2)\oplus \latt{-2}\oplus (k+1)\latt{2}),\chi_2)
\end{multline*}
is strongly $1$-reflective with respect to
${\Orth}^+(2U\oplus  k\latt{1})$ and
is strongly $2$-reflective with respect to
${\Orth}^+(U(2)\oplus \latt{-2}\oplus k\latt{2})$.
Theorem \ref{LiftD} is proved.
\smallskip

We study the first cusp form of the modular $D_8$-tower
in more details.
We note that
$$
(2\pi i)^{-1}\frac{\partial\vartheta(\tau ,z)}{\partial z}\big|_{z=0}=
 \sum_{n>0}
\biggl(\frac{-4}{n}\biggr)nq^{n^2/8}
=\eta(\tau )^3.
$$
Using  the exact formula for the Fourier coefficients
of the Jacobi form,  we find the Fourier expansion of
the Jacobi lifting  (see (\ref{lift}))
$$
\Delta_{5,D_7}=
\Lift(\eta^3\vartheta(z_1)\cdot\ldots\cdot \vartheta(z_7))=
\sum_{\substack{
n,m, N\in \NN,\,\ell \in D_7^*\\
\vspace{1.0\jot}
8nm-(2\ell, 2\ell)=N^2}}
$$
\begin{equation}\label{FexpD7}
N\sum_{d|(n,\ell,m)}
d^3\biggl(\frac{-4}{N/d}\biggr)
\,\biggl(\frac{-4}{2\ell/d}\biggr)\,
\exp\bigl(2\pi i (n\tau+(\ell,\gz_7)+m\omega)\bigr)
\end{equation}
where
$\ell/d\in D_7^*=\latt{\ZZ^7,(\frac12,\ldots, \frac 12)}$ and
$
\bigl(\frac{-4}{2\ell}\bigr)=
\bigl(\frac{-4}{2l_1}\bigr)\dots \bigl(\frac{-4}{2l_7}\bigr)$.
One can find the Fourier expansion of other functions of the $D_8$-tower
in terms of Fourier coefficients of  $\eta(\tau)^{3(8-k)}$.

Now we give a new product construction of $\Delta_{5,D_7}$ using
the strongly reflective forms
$$
\Phi_{{124}, D_8}=\Phi_{12}|_{D_8\emb N(3D_8)}\quad
{\rm and}\quad
\Phi_{114, D_7}=\Phi_{12}|_{D_7\emb N(D_7\oplus E_6\oplus A_{11})}
$$
of  Theorem  \ref{thm-Phi}.

\begin{proposition}\label{DeltaD7-Phi}
$$
\Delta_{5,D_7}^2=\frac{\Phi_{124, D_8}|_{D_7\emb D_8}}
{\Phi_{114, D_7}}.
$$
\end{proposition}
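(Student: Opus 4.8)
The plan is to prove the identity by the classical three-step argument: match the weights, show that the two sides have the same divisor on $\Omega(2U\oplus D_7)$, and then fix the remaining multiplicative constant by comparing one Fourier--Jacobi coefficient. First I would record that in the model \eqref{Dn} the sublattice $2U\oplus D_7\emb 2U\oplus D_8$ is cut out by $z_8=0$, so that $2U\oplus D_7=(2e_8)^\perp$ and the orthogonal complement of $D_7$ in $D_8$ is $\latt{4}=\ZZ(2e_8)$, which contains no $2$-roots. Hence the quasi pull-back of Theorem~\ref{thm:qpb} degenerates to the genuine restriction $\Phi_{124,D_8}|_{z_8=0}$, which keeps the weight $124$ and is not identically zero, since $\cD_{2e_8}$ is not a component of the $2$-divisor of $\Phi_{124,D_8}$. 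Thus the right hand side has weight $124-114=10=2\cdot 5$, matching $\Delta_{5,D_7}^2$.

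The heart of the matter is the divisor computation for the numerator. By Theorem~\ref{thm-Phi} the form $\Phi_{124,D_8}$ is strongly reflective with complete $2$-divisor, so I would run through all $2$-roots $v\in 2U\oplus D_8$ and sort them by their $e_8$-coordinate $x_8$; writing $v=u+x_8e_8$ with $u=\pr_{(2U\oplus D_7)^*}(v)$ one has $u^2=2-x_8^2$. For $x_8=0$ the root lies in $2U\oplus D_7$ and $\cD_v$ restricts transversally, with multiplicity one, to $\cD_v(2U\oplus D_7)$; summing these reproduces exactly $\divv\Phi_{114,D_7}$, the complete $2$-divisor of $2U\oplus D_7$. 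For $|x_8|\ge 2$ one has $u^2\le-2$ and $\cD_v$ does not meet the subdomain. The interesting case is $x_8=\pm1$, giving $u^2=1$: here the two distinct $2$-roots $u+e_8$ and $u-e_8$ both restrict to the single linear form $(Z,u)$ on $\{z_8=0\}$, so $\Phi_{124,D_8}|_{z_8=0}$ vanishes to order $2$ along each $1$-reflective divisor $\cD_u$. Combining the three cases gives
\begin{equation*}
\divv\bigl(\Phi_{124,D_8}|_{z_8=0}\bigr)=\divv\Phi_{114,D_7}
+2\sum_{\substack{u\in(2U\oplus D_7)^*/\{\pm1\}\\ u^2=1}}\cD_u .
\end{equation*}

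Dividing by $\Phi_{114,D_7}$, which vanishes to order one exactly along the $2$-divisor so that the quotient stays holomorphic, leaves a holomorphic form of weight $10$ whose divisor is $2\sum_{u^2=1}\cD_u$. By Theorem~\ref{LiftD} and Lemma~\ref{8A1} the lift $\Delta_{5,D_7}$ is strongly $1$-reflective with multiplicity one, its divisor being the unique $\Tilde\Orth^+(2U\oplus D_7)$-orbit of $1$-vectors (the case $k=7\ne4$), so $\divv\Delta_{5,D_7}^2=2\sum_{u^2=1}\cD_u$ as well. The two sides thus have the same divisor, their ratio is a holomorphic modular form of weight $0$ with neither zeros nor poles, hence a constant $c$, and I would pin $c=1$ from the first Fourier--Jacobi coefficients via \eqref{BNR} and the lift \eqref{lift}. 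The leading Fourier--Jacobi term of $\Phi_{124,D_8}$ sits in index $14$ and equals $\eta^{168}\prod_{v\in R_2(D_8)_{>0}}\vartheta(\tau,(v,\gz))$; under $z_8=0$ the $42$ roots of $D_7$ give $\prod_{v\in R_2(D_7)_{>0}}\vartheta$ while the $14$ roots $e_i\pm e_8$ give $\prod_{i=1}^7\vartheta(\tau,z_i)^2$. Since the leading term of $\Phi_{114,D_7}$ sits in index $12$ and equals $\eta^{162}\prod_{v\in R_2(D_7)_{>0}}\vartheta(\tau,(v,\gz))$, the quotient begins in index $14-12=2$ with coefficient $\eta^{6}\prod_{i=1}^7\vartheta(\tau,z_i)^2$, which is exactly the index-$2$ coefficient of $\Delta_{5,D_7}^2=\bigl(\eta^3\prod_{i=1}^7\vartheta(\tau,z_i)\bigr)^2e^{4\pi i\omega}+\dots$. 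Hence $c=1$.

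The step I expect to be the main obstacle is the bookkeeping in the $x_8=\pm1$ case: one must check that $u\pm e_8$ really are integral $2$-roots of $2U\oplus D_8$ for every $1$-vector $u$ (a gluing computation in $D_8^*/D_8$, using that $e_8$ and $-e_8$ represent the same discriminant class) and that these exhaust the roots restricting to $\cD_u$, so that the multiplicity is exactly $2$ and the resulting $1$-vectors are precisely the orbit of Lemma~\ref{8A1}. Alternatively, the whole identity can be packaged through the weight-$0$ Jacobi forms of Sect.~\ref{JBP}: writing $\Phi_{124,D_8}=\cB_{\varphi}$ and $\Phi_{114,D_7}=\cB_{\varphi'}$ with $\varphi,\varphi'$ the forms \eqref{phiS} and $\Delta_{5,D_7}=\cB_{\psi}$ with $\psi=r_1+r_1^{-1}+\dots+r_7^{-1}+10+q(\dots)$, and using that restriction commutes with the Borcherds product, the identity becomes the equality $\varphi|_{z_8=0}-\varphi'=2\psi$ of nearly holomorphic Jacobi forms for $D_7$, which can be verified on its finitely many polar and constant Fourier coefficients.
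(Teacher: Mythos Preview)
Your argument is correct and follows the same strategy as the paper: compute the divisor of $\Phi_{124,D_8}|_{z_8=0}$ by sorting the $2$-roots of $2U\oplus D_8$ according to their $e_8$-coordinate (yielding the complete $2$-divisor of $2U\oplus D_7$ with multiplicity one together with the $1$-reflective divisor with multiplicity two), divide by $\Phi_{114,D_7}$, and apply the K\"ocher principle. The only difference is in the endgame: you fix the constant by matching first Fourier--Jacobi coefficients, whereas the paper packages the quotient directly as a Borcherds product via the weak Jacobi form $2\phi_{0,D_7}=\Delta^{-1}\bigl(\vartheta_{N(3D_8)}|_{D_7}-\vartheta_{N(D_7\oplus E_6\oplus A_{11})}|_{D_7}\bigr)$---which is precisely the Jacobi-form identity you offer as an ``alternative''. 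One harmless slip: your $\eta$-exponents $168$ and $162$ should be $192$ and $186$ (you dropped the factor $\Delta(\tau)=\eta^{24}$ coming from the bracket in \eqref{BNR}); since both are off by $24$ the quotient $\eta^{6}\prod_{i=1}^{7}\vartheta(\tau,z_i)^{2}$ is unaffected and your constant computation stands.
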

\begin{proof}
We consider the quasi pull-back
$$
\Phi_{{124}, D_8}|_{D_7\emb D_8}\in
S_{124}(\Tilde{\Orth}^+(2U\oplus D_7 ), \det)
$$
where $D_7=\latt{e_8}^\perp_{D_8}\emb D_8$.
The arguments of the proof of Theorem \ref{Tmain}
give
$$
\divv \Phi_{{124}, D_8}|_{D_7\emb D_8}=
\Sum_{\substack {\pm u\in 2U\oplus D_7
\vspace{0.5\jot} \\
u^2=2}}
\cD_u+
\Sum_{\substack {\pm v\in 2U\oplus D_7
\vspace{0.5\jot} \\
v^2=4,\,  v/2\in D_7^*}}
2\cD_v.
$$
(For a general result of this type see \cite{G16}.)
Then
$$
\divv \frac{\Phi_{{124}, D_8}|_{D_7\emb D_8}}
{\Phi_{114, D_7}}=
\Sum_{\substack {\pm v\in 2U\oplus D_7
\vspace{0.5\jot} \\
v^2=4,\,  v/2\in D_7^*}}
2\cD_v.
$$
For the reflective modular forms  of  Theorem  \ref{thm-Phi},
we found  the Jacobi type Borcherds products in Sect. \ref{JBP}.
We get the following weak  Jacobi form of weight $0$
with integral Fourier coefficients
\begin{align*}
2\phi_{0, D_7}(\tau, \gz_7)&=
\Delta(\tau)^{-1}
\bigl(\vartheta_{N(3D_8)}(\tau, \gz)|_{\gz\in D_7\emb D_8}
-\vartheta_{N(D_7\oplus E_6\oplus A_{11})}
(\tau, \gz)|_{\gz\in D_7}\bigr)\\
{}&=2(r_1+r_1^{-1}+\dots+r_7+r_7^{-1})+20+q(\dots)
\in J_{0, D_7}^{weak}
\end{align*}
where $r_i=\exp(2\pi i z_i)$. The formula for the divisor
of the quotient shows that the last  expansion contains
all Fourier coefficients with negative hyperbolic norms
of their indices. According to (\ref{BP}), $B_{\phi_{0, D_7}}$
is strongly $4$-reflective of weight $5$. Using the
K\"ocher principle, we obtain
\begin{equation*}\label{Phi-D7}
\Delta_{5,D_7}=\Lift(\eta^3\vartheta(z_1)\cdot\ldots\cdot \vartheta(z_7))
=B_{\phi_{0, D_7}}=
\sqrt{\frac{\Phi_{{124}, D_8}|_{D_7\emb D_8}}
{\Phi_{114, D_7}}}.
\end{equation*}
We can find  similar expressions for all functions
$\Delta_{12-k, D_k}$.
\end{proof}

\begin{remark}\label{autdiscr}
{\rm The modular form $\Delta_{4+\deg V ,D_{8-\deg V}}$
is equal  to the automorphic dicriminant of the moduli spaces
of the K\"ahler moduli of a Del Pezzo surfaces $V$ of degree
$1\le \deg V\le 6$ (see \cite{Y}, \cite{G10}).
The explicit  formula of type (\ref{FexpD7})
gives us the generating function of the imaginary simple roots
of the corresponding Lorentizian Kac-Moody algebra defined by
this automorphic dicriminant. }
\end{remark}

\begin{remark}
\label{Scheit}
{\rm Some other reflective modular forms of singular weight
similar to Borcherds forms $\Phi_{12}$  and $\Phi_4^{BE}$ above
were found by N. Scheithauer (see \cite{Sch}). The reflective
forms of singular weight in his class are modular with respect
to congruence subgroups.}
\end{remark}

\medskip

\subsection{The $D_8$-tower of Jacobi liftings and
$U(2)\oplus D_4$}
\label{D8towerjaclift}
In this subsection, we construct three new reflective modular forms.
In Proposition \ref{Pr-2-forms}, we proved that the $2$-root system of
$U\oplus D_4 $ has two different automorphic corrections.
The second example of this type is given in the next theorem.
\begin{theorem}\label{U(2)D4}
The hyperbolic $2$-root system of  $U(2)\oplus D_4$
has two different automorphic corrections.
One of them has (nearly) a Jacobi lifting construction.
\end{theorem}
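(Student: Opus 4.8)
The plan is to realise the two automorphic corrections as strongly $2$-reflective modular forms living on the two non-isomorphic extended lattices of signature $(6,2)$ attached to the hyperbolic lattice $S=U(2)\oplus D_4$: namely $T_1=U\oplus S=U\oplus U(2)\oplus D_4$ (the case $m=1$ of datum (IV)) and $T_2=U(2)\oplus S=U(2)\oplus U(2)\oplus D_4$ (the case $m=2$). Both carry $S$ as the hyperbolic lattice of the $0$-dimensional cusp obtained by splitting off the first summand, so a strongly $2$-reflective form on either one produces an automorphic correction of the $2$-root system of $S$ via its Fourier expansion at that cusp. These two corrections are genuinely distinct because $T_1$ and $T_2$ are not isomorphic: $|\det T_1|=|\det U(2)|\cdot|\det D_4|=16$ while $|\det T_2|=|\det U(2)|^2\cdot|\det D_4|=64$.

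For the first correction I would simply reuse the form $\Phi_{40,\,U\oplus U(2)\oplus D_4}$ produced in Proposition \ref{Pr-2-forms} as the quasi pull-back $\Phi_{28}^{(N_8)}|_{U(2)\oplus U\oplus D_4\emb 2U\oplus N_8}$. By the argument of Theorem \ref{Tmain} it is strongly $2$-reflective with the complete $2$-divisor on $T_1$, and reading its Fourier expansion at the cusp $\latt{e',e}$ with $\latt{e',f'}=U$ unimodular gives the denominator identity with hyperbolic lattice $S=U(2)\oplus D_4$.

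For the second correction I would use the additive Jacobi lifting of the $D_8$-tower together with the renormalisation duality of Lemma \ref{8A1}. The self-dual-up-to-scaling properties $U(2)^*(2)\cong U$ and $D_4^*(2)\cong D_4$ (the latter coming from triality) give $T_2^*(2)\cong 2U\oplus D_4$, hence $\Orth(T_2)\cong\Orth(2U\oplus D_4)$ and an identification of the domains $\Omega(T_2)\cong\Omega(2U\oplus D_4)$ (the conformal class is unchanged by the rescaling). Under this identification a norm-$1$ vector of $(2U\oplus D_4)^*$ corresponds to a $2$-root of $T_2$, so the strongly $1$-reflective modular form
$$
\Delta_{8,D_4}=\Lift\bigl(\eta(\tau)^{12}\,\vartheta(z_1)\vartheta(z_2)\vartheta(z_3)\vartheta(z_4)\bigr),
$$
the $k=4$ member of the $D_8$-tower of Subsection \ref{susec:2UD_8}, transfers to a weight $8$ modular form on $\Omega(T_2)$ whose divisor is supported on the walls of the $2$-reflections of $T_2$. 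This is the ``nearly Jacobi lifting'' correction, the qualification reflecting that it is a lift on $2U\oplus D_4$ reinterpreted through the duality rather than a literal lift of a Jacobi form for $T_2$; its different weight ($8$ versus $40$) already mirrors the different symmetry lattice.

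The hard part, I expect, is controlling the divisor of $\Delta_{8,D_4}$ in the distinguished case $k=4$. The theta factors only force vanishing on the single orbit of walls $\{z_i=0\}$, i.e. on the $\Tilde{\Orth}^+$-orbit of $e_1\in D_4^*$, whereas by Lemma \ref{8A1}(3) the norm-$1$ vectors of $2U\oplus D_4^*$ split into the three orbits of $e_1$, $(e_1+e_2+e_3+e_4)/2$ and $(e_1+e_2+e_3-e_4)/2$. To obtain the complete $2$-divisor one must pass to the full group $\Orth^+(T_2)\cong\Orth^+(2U\oplus D_4)$, which contains the triality group of order $6$ of $D_4$ permuting these three orbits; this is exactly the double extension recorded in Lemma \ref{8A1}(1) for $k=4$. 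Since $\Delta_{8,D_4}$ is modular for this group up to the character $\chi_2$, its vanishing on one orbit propagates to all three, yielding $\divv\Delta_{8,D_4}=\sum_{v\in R_2(T_2)/\{\pm1\}}\cD_v(T_2)$. Verifying this triality invariance of the divisor, and checking that multiplicity one is preserved under the renormalisation, is the delicate step; once it is in place, the two strongly $2$-reflective forms on the non-isomorphic lattices $T_1$ and $T_2$ furnish the two required automorphic corrections.
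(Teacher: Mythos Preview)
Your first correction (weight $40$ on $T_1=U\oplus U(2)\oplus D_4$) agrees with the paper. The gap is in the second one.

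You assert that $\Delta_{8,D_4}$ is modular for the full group $\Orth^+(2U\oplus D_4)$ up to $\chi_2$, so that triality propagates the vanishing from the orbit of $e_1$ to the other two orbits of norm-$1$ vectors in $D_4^*$. This is precisely what fails at $k=4$. The argument establishing $\Delta_{12-k,D_k}\in S_{12-k}(\Orth^+(2U\oplus D_k),\chi_2)$ only uses invariance under permutations of $z_1,\dots,z_k$ and anti-invariance under $z_i\mapsto -z_i$; for $k\ne 4$ these already generate $\Orth(D_k^*/D_k)$, but for $k=4$ they generate only an index-$3$ subgroup and miss triality. Concretely, the triality elements $\sigma_1,\sigma_2$ (reflections in the $1$-vectors $(\pm e_1\pm e_2\pm e_3\pm e_4)/2$ of $D_4^*$) send $\Delta_{8,D_4}=\Lift(\eta^{12}\vartheta(z_1)\vartheta(z_2)\vartheta(z_3)\vartheta(z_4))$ to the liftings of two \emph{different} Jacobi forms $\eta^{12}\prod\vartheta((\pm z_1\pm z_2\pm z_3\pm z_4)/2)$. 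These three forms have three distinct divisors, each supported on a single $\Tilde\Orth^+$-orbit of $4$-vectors. So $\Delta_{8,D_4}$ alone is \emph{not} strongly $2$-reflective with complete $2$-divisor on $T_2$, and your weight-$8$ correction does not exist.

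The paper repairs this by taking the product
\[
F_{24,\,U(2)\oplus D_4}=\Delta_{8,D_4}\cdot(\Delta_{8,D_4}|\sigma_1)\cdot(\Delta_{8,D_4}|\sigma_2)\in S_{24}(\Orth^+(2U\oplus D_4),\chi_2),
\]
which is now genuinely triality-invariant and whose divisor is the union of all three orbits, i.e.\ the complete $2$-divisor of $T_2=2U(2)\oplus D_4$. This is also why the statement says ``nearly'' a Jacobi lifting: the second correction is a product of three additive lifts, not a single one. The weights distinguishing the two corrections are $40$ and $24$, not $40$ and $8$.
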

\begin{proof}
We construct two $2$-reflective modular forms of different weights
with respect to the stable orthogonal groups
of the lattices
$$
U\oplus U(2)\oplus D_4 \quad {\rm and}
\quad  U(2)\oplus U(2)\oplus D_4 .
$$
The first automorphic correction was given
in Proposition \ref{Pr-2-forms}:
$$
\Phi_{40,\,U(2)\oplus D_4}=
\Phi_{28, N_8}|_{U\oplus U(2)\oplus D_4 }
\in
S_{40}(\Tilde\Orth^+(U\oplus U(2)\oplus D_4 ), \det).
$$
The second  automorphic correction is related
to the $D_4$-modular form
$$
\Delta_{8, D_4}=\Lift(\eta^{12}(\tau)\vartheta(z_1)\vartheta(z_2)
\vartheta(z_3)\vartheta(z_4))
\in S_8(\Orth^+(2U\oplus D_4 ),\chi_2)
$$
which is a $4$-reflective modular form in the reflective
$D_8$-tower of Theorem \ref{LiftD}.
It  vanishes along the $4$-vectors of the orbit
$\Tilde\Orth^+(2U\oplus D_4 )(2e_1)$ (see (\ref{Dn})).
We proved in Lemma \ref{8A1} that the lattice $2U\oplus D_4 $
contains three  $\Tilde\Orth^+(2U\oplus D_4 )$-orbits of $4$-vectors.
The trick of Lemma \ref{8A1}  shows that
$$
\Orth(2U(2)\oplus D_4 )=
\Orth(2U\oplus D_4^*(2))= \Orth(2U\oplus D_4 )
$$
since $D_4^*(2)\cong D_4$.
The $2$-vectors of $2U(2)\oplus D_4 $ correspond to all
$4$-vectors of $2U\oplus D_4 $. We get them from the first orbit
$\Tilde\Orth^+(2U\oplus D_4 )(2e_1)$ using the reflections
$$
\sigma_1=\sigma_{(-e_1-e_2-e_3+e_4)/2},\quad
\sigma_2=\sigma_{(e_1+e_2+e_3-e_4)/2}\in
\Orth^+(2U\oplus D_4 )
$$
with respect to $1$-vectors in $D_4^*$. We have
$$
\sigma_1(e_4)=(e_1+e_2+e_3-e_4)/2,\quad
\sigma_2(e_4)=(e_1+e_2+e_3+e_4)/2.
$$
Therefore, the product of three Jacobi liftings is a strongly reflective
modular form with the complete $4$-divisor
\begin{equation}\label{F2U(2)+D4,2}
F_{24, U(2)\oplus D_4}=\Delta_{8, D_4}\cdot (\Delta_{8, D_4}|\sigma_1)\cdot
(\Delta_{8, D_4}|\sigma_2)\in S_{24}(\Orth^+(2U\oplus D_4 ),\chi_2).
\end{equation}
We note that $\Delta_{8, D_4}|\sigma_1=\Lift(\varphi_{8,D_4}^{(1)})$
and $\Delta_{8, D_4}|\sigma_2=\Lift(\varphi_{8,D_4}^{(2)})$
are $4$-reflective where
$$
\varphi_{8,D_4}^{(1)}=\eta^{12}(\tau)\vartheta(\tfrac{-z_1+z_2+z_3+z_4}2)
\vartheta(\tfrac{z_1-z_2+z_3+z_4}2)
\vartheta(\tfrac{z_1+z_2-z_3+z_4}2)
\vartheta(\tfrac{z_1+z_2+z_3-z_4}2),
$$
$$
\varphi_{8,D_4}^{(2)}=\eta^{12}(\tau)\vartheta(\tfrac{z_1+z_2+z_3+z_4}{2})
\vartheta(\tfrac{z_1+z_2-z_3-z_4}{2})
\vartheta(\tfrac{z_1-z_2-z_3+z_4}{2})
\vartheta(\tfrac{z_1-z_2+z_3-z_4}{2})
$$
(see \cite[Example 1.8]{CG2}).
The Jacobi type Borcherds product for $F_{24, U(2)\oplus D_4}$
can be easily constructed  from the corresponding product for
$\Delta_{8, D_4}$.
\end{proof}

\begin{remark} \label{2,4,reflections}
{\rm The products of  reflective forms of $D_m$-type
from Theorem \ref{thm-Phi} and the functions constructed in the subsection 6.2
$$
\Phi_{k_m, D_m}\cdot\Delta_{12-m, D_m}  \qquad (2\le m\le 8)
$$
are strongly reflective modular forms with the complete reflective
divisor determined by all reflections in
$2U\oplus D_m$ ($m\ne 4$).
These modular forms determine Lorentzian Kac--Moody algebras
with the maximal  Weyl groups generated by all  $2$- and $4$-reflections
of the hyperbolic lattices $2U\oplus D_m$ ($m\ne 4$).

For $m=4$ we get more complicated formula for the root system
for the strongly reflective modular form with the complete reflective
divisor ``of type'' $F_4$
$$
\Phi_{72, D_4}\cdot \Delta_{8, D_4}\cdot (\Delta_{8, D_4}|\sigma_1)\cdot
(\Delta_{8, D_4}|\sigma_2).
$$
}
\end{remark}

\medskip

\subsection{The singular modular form for $2U\oplus 4A_1$
and\\ $U(4)\oplus D_4$.}
\label{singforms}
In this subsection, we construct four reflective  modular forms,
in particular, the automorphic correction of  the $2$-root system
of $U(4)\oplus D_4$.

\begin{theorem}\label{Thm-U(4)+D_4}
There is a strongly $2$-reflective modular form
$$
F_6\in M_6(\Orth^+(2U(4)\oplus D_4 ),\chi_2)
$$
with complete $2$-divisor.
\end{theorem}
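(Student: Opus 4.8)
The plan is to imitate the proof of Theorem \ref{U(2)D4}, replacing its weight-$8$ building block by the singular weight-$2$ form. First I would invoke the duality argument of Lemma \ref{8A1}. Since $2U$ is unimodular and $(4A_1)^*(4)\cong 4A_1$, renormalizing $M=2U\oplus 4A_1$ by $M^*(4)$ gives a canonical identification
$$
\Orth^+(2U\oplus 4A_1)=\Orth^+\bigl(2U(4)\oplus 4A_1\bigr).
$$
Because $4A_1\subset D_4$ is of finite index, the lattices $2U(4)\oplus 4A_1$ and $2U(4)\oplus D_4$ span the same rational quadratic space, so their tube domains coincide; the two arithmetic groups then share a common finite-index subgroup, and the enlargement from $\Orth(4A_1)$ to $\Orth(D_4)$ is governed by the triality index $[\Orth(D_4):\Orth(4A_1)]=3$.

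The building block is the singular modular form of weight $2$ for $2U\oplus 4A_1$ of \cite{G10},
$$
\Delta_{2,4A_1}=\Lift\bigl(\vartheta(z_1)\vartheta(z_2)\vartheta(z_3)\vartheta(z_4)\bigr),
$$
which by the lifting formula (\ref{lift}) is strongly reflective, vanishes exactly along the walls attached to the eight roots of $4A_1$, and carries an order-two character arising from the $v_\eta^{12}$-multiplier of the theta-product. Transported through the identification above it is reflective on $2U(4)\oplus 4A_1$, but does not yet vanish along the extra sixteen roots of $D_4$.

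To repair this I would, exactly as for $\sigma_1,\sigma_2$ in the proof of Theorem \ref{U(2)D4}, take the reflections in the two nontrivial $1$-vectors of $D_4^*$ and set
$$
F_6=\Delta_{2,4A_1}\cdot(\Delta_{2,4A_1}|\sigma_1)\cdot(\Delta_{2,4A_1}|\sigma_2).
$$
The three factors are the triality translates of the base form; their reflective walls form three disjoint single orbits whose union is the full set of twenty-four $D_4$-roots, so the weights add to $3\cdot 2=6$ and the characters multiply to $\chi_2$. It then remains to check that $F_6$ is modular for the whole group $\Orth^+(2U(4)\oplus D_4)$ and not merely for the index-$3$ subgroup: this holds because the triality permuting the three factors lies in $\Orth^+(2U(4)\oplus D_4)$, so the product is carried into itself up to $\chi_2$. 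Holomorphy and the multiplicity-one (complete, reduced $2$-divisor) statement follow from the corresponding properties of each $\Delta_{2,4A_1}|\sigma_i$ together with the disjointness of their divisors, and the character $\chi_2$ is read off from its action on the discriminant group as in (\ref{DeltaDk2}).

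The step I expect to be the main obstacle is the bookkeeping buried in the renormalization: one must verify that, under $\Orth^+(2U\oplus 4A_1)=\Orth^+(2U(4)\oplus 4A_1)$ and the passage $4A_1\subset D_4$, the set of $2$-roots of $2U(4)\oplus D_4$ is matched precisely with the union of the three reflection-orbits produced by $1,\sigma_1,\sigma_2$, each orbit occurring exactly once, so that no $4$-vector whose projection has the wrong norm contributes a spurious zero and $\divv F_6$ is genuinely the complete reduced $2$-divisor.
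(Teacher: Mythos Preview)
Your proposal is essentially the paper's own proof: form $F_6$ as the triality symmetrization $\Delta_{2,4A_1}\cdot(\Delta_{2,4A_1}|\sigma_1)\cdot(\Delta_{2,4A_1}|\sigma_2)$, where $\sigma_1,\sigma_2$ are the two nontrivial triality reflections, and then check that the three factors' divisors are disjoint and together exhaust the twenty-four $2$-root orbits. The paper sets up the duality differently, however: instead of your identification $\Orth^+(2U\oplus 4A_1)=\Orth^+(2U(4)\oplus 4A_1)$ followed by the passage $4A_1\subset D_4$, it uses $D_4^*(2)\cong D_4$ to obtain directly
\[
\Orth(2U(4)\oplus D_4)=\Orth\bigl((2U(4)\oplus D_4)^*(4)\bigr)=\Orth(2U\oplus D_4(2)),
\]
and then views $2U\oplus D_4(2)$ as an index-$2$ \emph{sublattice} of $2U\oplus 4A_1$. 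This is cleaner than your route, because $\Delta_{2,4A_1}$ is then automatically modular for the subgroup $\Tilde\Orth^+(2U\oplus D_4(2))$, and the target group is hit on the nose rather than via a finite-index approximation. The paper's Lemmas~\ref{-2 in D_4(-2)} and~\ref{orbitsD4-2} supply exactly the bookkeeping you flag as the obstacle: the $2$-vectors of $2U(4)\oplus D_4$ become $\tfrac12$-vectors of $2U\oplus D_4(2)^*$, and there are precisely $24$ $\Tilde\Orth^+$-orbits of these, split $8+8+8$ under triality.
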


We construct this reflective form using
the singular modular form for $2U\oplus 4A_1 $ (see \cite[\S 5]{G10}).
As in Lemma \ref{8A1},
$$
\Orth(2U(4)\oplus D_4)=\Orth(2U\oplus D_4^*(4))=\Orth(2U\oplus D_4(2))
$$
since $D_4^*(2)=D_4$.
\begin{lemma}\label{-2 in D_4(-2)}
The $2$-vectors of $2U(4)\oplus D_4 $ correspond
to $\frac12$-vectors
of the dual lattice $2U\oplus D_4(2)^*$.
\end{lemma}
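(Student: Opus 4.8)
The plan is to prove the lemma by pure bookkeeping with the renormalization operation $M\mapsto M^*(n)$ already used in Lemma \ref{8A1}, tracking what happens to vectors of norm $2$ through the chain of identifications $\Orth(2U(4)\oplus D_4)=\Orth(2U\oplus D_4(2))$ recorded just above. Write $M=2U(4)\oplus D_4$ and $N=2U\oplus D_4(2)$. First I would pin down that chain precisely: since $U(4)^*(4)\cong U$ and, by the exceptional self-duality $D_4^*\cong D_4(1/2)$ (equivalently $D_4^*(2)=D_4$), one has $D_4^*(4)=D_4(2)$, so that $M^*(4)=2U\oplus D_4(2)=N$. Thus $N$ is simply $M^*$ with its form rescaled by $4$, and the equality of orthogonal groups reflects only the invariance of $\Orth$ under passage to the dual lattice and under rescaling of the quadratic form.

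The key step is then to identify the dual lattice $N^*$ that appears in the statement. I would compute it directly: $(2U)^*=2U$, while $(D_4(2))^*=D_4^*(1/2)=D_4(1/2)(1/2)=D_4(1/4)$, again using $D_4^*=D_4(1/2)$. Hence $N^*=2U\oplus D_4(1/4)$. On the other hand $M(1/4)=(2U(4)\oplus D_4)(1/4)=2U\oplus D_4(1/4)$, because rescaling $U(4)$ by $1/4$ returns $U$. Therefore $N^*=M(1/4)$: the dual lattice $2U\oplus D_4(2)^*$ is canonically $M$ with its quadratic form multiplied by $1/4$, on the same underlying abelian group. (Equivalently this follows abstractly from $N=M^*(4)$ together with $(L(c))^*=L^*(1/c)$.)

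With this identification the correspondence is immediate: the identity map on the common underlying group $M$ multiplies every norm by $1/4$, so a vector $v$ has $M$-norm $(v,v)=2$ if and only if it has $N^*$-norm $\tfrac14(v,v)=\tfrac12$. This is exactly the asserted bijection between $2$-vectors of $2U(4)\oplus D_4$ and $\tfrac12$-vectors of $2U\oplus D_4(2)^*$, and it commutes with the reflections $\sigma_v$ since a reflection depends only on the line $\QQ v$ and is insensitive to an overall scaling of the form. The only non-formal ingredient is the triality isomorphism $D_4^*\cong D_4(1/2)$, which is already invoked in the line preceding the lemma; the one point demanding care is the direction of the rescaling, namely that the relevant factor is $1/4$ (so that $2\mapsto\tfrac12$) rather than $4$, which is precisely why it is the dual $N^*$, and not $N$ itself, that carries the $\tfrac12$-vectors.
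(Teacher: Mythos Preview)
Your proof is correct and follows essentially the same route as the paper: both identify $N=M^*(4)$ and then dualize, with the paper writing the map explicitly as $v\mapsto v/4$ while you package it as the lattice isomorphism $N^*\cong M(1/4)$. The only addition in the paper's argument is a check that the $2$-vectors of $M$ are primitive in $M^*$ and that the $\tfrac12$-vectors are primitive in $N^*$; this is not needed for the bijection itself (which is immediate from your rescaling), but it feeds the orbit count in the next lemma via the Eichler criterion.
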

\begin{proof}
Any $2$-vector $v\in M=2U(4)\oplus D_4 \subset M^*$ is  primitive
in $M^*$ since $D_4^*$ is odd integral. After renormalisation
by $4$, we have $\frac{v}4\in (M^*(4))^*\cong 2U\oplus D_4(2)^*$,
$(\frac{v}4)^2=\frac12$.
Any $\frac12$-vector in $2U\oplus D_4(2)^*$
is primitive in this lattice  since  the minimal possible norm
in  $D_4(2)^*$  is equal to $\frac12$.
\end{proof}

By definition,  $D_4\subset \ZZ^4$. Therefore,
$2U\oplus D_4(2)\subset 2U\oplus 4A_1 $
of index $2$ and
$$
\Tilde\Orth^+(2U\oplus D_4(2))\subset \Tilde\Orth^+(2U\oplus 4A_1 ).
$$
\begin{lemma}\label{orbitsD4-2}
We put $D_4(2)\subset 4A_1=\oplus_{i=1}^4\ZZ a_i$.
There are twenty four  $\Tilde\Orth^+(2U\oplus D_4(2))$-orbits
of $\frac 12$-vectors in the dual lattice  $2U\oplus D_4(2)^*$
and four $\Tilde\Orth^+(2U\oplus 4A_1 )$-orbits
of $\frac 12$-vectors in $2U\oplus 4A_1 $.
\end{lemma}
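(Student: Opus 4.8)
The plan is to convert both orbit counts into counts of elements of a finite discriminant group carrying a prescribed value of the discriminant quadratic form, using the Eichler criterion. By Lemma \ref{-2 in D_4(-2)} (together with the renormalisation $\Orth(2U(4)\oplus D_4)=\Orth(2U\oplus D_4(2))$) the $2$-vectors of $2U(4)\oplus D_4$ are identified with the $\frac12$-vectors of $(2U\oplus D_4(2))^*=2U\oplus D_4(2)^*$, and the $\frac12$-vectors relevant for the second count live in $(2U\oplus 4A_1)^*=2U\oplus 4A_1^*$. In both cases the ambient lattice splits off two hyperbolic planes, so I would apply the form of Eichler's theorem cited in \cite{G2} and \cite{GHS4}: for $L=2U\oplus L_0$ the stable group $\Tilde\Orth^+(L)$ acts transitively on the set of vectors $v\in L^*$ of a fixed norm that lie in a fixed coset of the discriminant group $A_{L_0}=L_0^*/L_0$. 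Consequently the $\Tilde\Orth^+(L)$-orbits of $\frac12$-vectors in $L^*$ are in bijection with the classes $\gamma\in A_{L_0}$ admitting a representative of norm $\frac12$.

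The second step is to pin down which classes occur. Since $2U$ represents every even integer, a class $\gamma\in A_{L_0}$ contains a $\frac12$-vector of $L^*$ exactly when it contains some $w\in L_0^*$ with $w^2\equiv\frac12\pmod{2\ZZ}$: writing $v=u+w$ with $u\in 2U$ of norm $\frac12-w^2$ produces the vector we want. Because $w^2\bmod 2\ZZ$ is precisely the value $q_{L_0}(\gamma)$ of the discriminant form, the number of orbits equals the number of $\gamma\in A_{L_0}$ with $q_{L_0}(\gamma)=\frac12$ in $\QQ/2\ZZ$. For $L_0=4A_1$ the discriminant group is $(\ZZ/2)^4$ and on the coordinate generators $\bar g_i=\frac12 a_i$ one has $q_{4A_1}(\sum_{i\in I}\bar g_i)=\frac12|I|\bmod 2\ZZ$; thus $q=\frac12$ forces $|I|=1$, giving exactly $4$ classes, which is the second assertion.

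For $L_0=D_4(2)$ the discriminant group has order $\det D_4(2)=64$, and the cleanest route avoids enumerating representatives modulo $2D_4$. I would write $A_{D_4(2)}\cong D_4^*/2D_4$ and split it into the integral part $\ZZ^4/2D_4$ and the spinor coset $(\ZZ^4+s)/2D_4$ with $s=\frac12(1,1,1,1)$, each of order $32$. A direct parity computation, using $w_i^2\equiv w_i\pmod 2$, shows every class in the spinor coset has odd $D_4$-norm, while the integral part splits into $16$ even-norm and $16$ odd-norm classes; hence exactly $48$ classes have $q\in\{\frac12,\frac32\}$. To separate the two values I would invoke Milgram's formula $\sum_{\gamma}e^{\pi i q(\gamma)}=\sqrt{|A|}\,e^{\pi i\sigma/4}$ with signature $\sigma=4$, whose imaginary part yields $N_{1/2}=N_{3/2}$, so $N_{1/2}=24$. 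As a concrete check, the $24$ norm-one vectors of $D_4^*$ (the eight $\pm e_i$ and the sixteen $\frac12(\pm1,\pm1,\pm1,\pm1)$) are pairwise inequivalent modulo $2D_4$, since the difference of two distinct such vectors has norm at most $4$, hence cannot lie in $2D_4$ (whose minimal norm is $8$); these supply explicit representatives for the $24$ orbits.

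The main obstacle is precisely the $D_4(2)$ count: the discriminant form is a nondegenerate quadratic form on a group of order $64$, and a naive enumeration modulo $2D_4$ is error-prone because the residue of the norm modulo $4$ is genuinely sensitive to the chosen representative. Packaging the count through the even/odd parity split together with the Gauss-sum identity is what makes it clean and rigorously fixes the value $24$ from above and below; the $4A_1$ count is then immediate. One bookkeeping point to record is that the phrase ``$\frac12$-vectors in $2U\oplus 4A_1$'' in the statement must be read in the dual lattice $2U\oplus 4A_1^*$, since $2U\oplus 4A_1$ has no vectors of norm $\frac12$.
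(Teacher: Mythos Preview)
Your proof is correct and follows the same overall strategy as the paper: both reduce the orbit count, via the Eichler criterion, to counting classes $\gamma$ in the discriminant group with $q(\gamma)=\tfrac12$, and both handle the $4A_1$ case by inspection of $(\ZZ/2)^4$.

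The only real difference is in how the number $24$ for $D_4(2)$ is obtained. The paper simply asserts the distribution $(N_0,N_1,N_{1/2},N_{3/2})=(4,12,24,24)$ on $D_4^*/2D_4$ and lists the representatives $\pm a_i/2$ and $(\pm a_1\pm a_2\pm a_3\pm a_4)/4$. You instead first isolate the $48$ odd-norm classes by the parity split $\ZZ^4/2D_4 \,\cup\, (\ZZ^4+s)/2D_4$ and then invoke the imaginary part of Milgram's Gauss-sum identity to force $N_{1/2}=N_{3/2}=24$. This is a genuinely cleaner argument than a raw enumeration modulo $2D_4$, and your closing check that the $24$ norm-one vectors of $D_4^*$ are pairwise inequivalent modulo $2D_4$ (since $2D_4$ has minimal norm $8$) recovers exactly the paper's explicit list of representatives. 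Your final bookkeeping remark is also correct: the paper's phrase ``$\tfrac12$-vectors in $2U\oplus 4A_1$'' is a mild abuse, as the vectors $\tfrac{a_i}{2}$ it names lie in the dual $2U\oplus 4A_1^*$.
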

\begin{proof}
We proved above that all $\frac 12$-vectors are primitive
in the corresponding dual lattices. According to the Eichler criterion
(see \cite{G2}, \cite{GHS4}), the orbit of a $\frac 12$-vector
with respect to the stable  orthogonal group is defined by its image
in the discriminant group.
It is clear that there are four such orbits $\frac{a_i}2$
$(1\le i \le 4)$ in $2U\oplus 4A_1 $.

We have
$$
D_4(2)^*/D_4(2)\cong \frac12 D_4^*/D_4\cong D_4^*/2D_4\cong
(D_4^*/D_4)/(D_4/2D_4).
$$
Analysing the last quotient, we see that the discriminant groups
\linebreak
$D_4(2)^*/D_4(2)$ contains $24$ (respectively $4$, $12$, $24$)
elements of norm $\frac 12\mod 2$ (respectively of norms
$0,1,\frac 32 \mod 2$). In the case of norm $\frac 12$,
their representatives are
$\pm a_i/2$, $(\pm a_1\pm a_2\pm a_3\pm a_4)/4$.
\end{proof}

The product of Jacobi theta-series
$\vartheta(z_1)\cdots \vartheta(z_n)$ can be considered as a Jacobi
form for $D_n$ (see Example 3.1)
or a Jacobi form of half-integral index  for $nA_1$.
For example,
$$
\psi_{2,4A_1}(\tau,\gz_4)=
\vartheta(\tau,z_1)\vartheta(\tau,z_2)\vartheta(\tau,z_3)
\vartheta(\tau,z_4)\in J_{2,D_4}(v_\eta^{12})
$$
is a $D_4$-Jacobi form with character $v_\eta^{12}:SL_2(\mathbb Z)\to \{\pm 1\}$.
The same product
$$
\psi_{2,4A_1}(\tau,\gz_4)=
\vartheta(\tau,z_1)\vartheta(\tau,z_2)\vartheta(\tau,z_3)
\vartheta(\tau,z_4)\in J_{2,4A_1;\frac{1}2}(v_\eta^{12}\times v_H)
$$
is a  Jacobi form of index $\frac{1}2$ with respect to the lattice
$4A_1$ where $v_H$ is the unique non-trivial binary character of the Heisenberg
group $H(4A_1)$  (see \cite{CG2}).
For such Jacobi forms, the corresponding lifting
contains only Hecke operators of indices congruent to a constant modulo
the conductor of the character.
According to the lifting constructions (see \cite[Theorem 1.12]{GN4}
and  \cite[Theorem 2.2]{CG2}), the following function is defined:
\begin{equation}\label{Delta4A_1}
\Delta_{2,4A_1}=\Lift(\psi_{2,\,4A_1})\in M_2(\Orth^+(2U\oplus 4A_1 ),\chi_2).
\end{equation}
All Fourier coefficients with primitive indices of  this lifting
are equal to $\pm 1$ or $0$
$$
\Delta_{2,4A_1}=
\sum_{\substack{\ell=(l_1,\dots, l_4)\in \ZZ^4,\
l_i\equiv 1 \,{\rm mod \,}2}}
$$
$$
\sum_{\substack{
 n,\,m\in \NN\\
\vspace{0.5\jot} n\equiv m\equiv 1\,{\rm mod\,}2\\
\vspace{0.5\jot}  4nm-l_1^2-l_2^2-l_3^2-l_4^2=0}}
\sigma_1((n,\ell,m))
\biggl(\frac{-4}{\ell}\biggr)\exp(\pi i (n\tau+l_1z_1+\dots+l_4z_4+m\omega))
$$
where $(n,\ell ,m)$ is the greatest common divisor
and
$\bigl(\frac{-4}{\ell}\bigr)=\bigl(\frac{-4}{l_1l_2l_3l_4}\bigr)$
is the Kronecker symbol.
It  was proved in \cite[Theorem 5.1]{G10} that
$$
\divv\Delta_{2,\, 4A_1}
=\Sum_{\substack
{\pm v\in 2U\oplus 4A_1^*,
\ v^2=\frac{1}2
}} \cD_v(2U\oplus 4A_1).
$$
In other words, $\Delta_{2,4A_1}$ is a strongly $2$-reflective modular form
which vanishes along the divisors defined by one of two
$\Orth^+(2U\oplus 4A_1 )$-orbits of $2$-vectors in $2U\oplus 4A_1$ ($(2v)^2=2$).

The Borcherds product of this modular form is defined by the Jacobi form
(see (\ref{phi0D8}) and (\ref{Hecke}))
$$
\varphi_{0,\,4A_1}(\tau,\gz_4)=
3^{-1}\frac{\psi_{2,\,4A_1}|\, T_{-}(3)}{\psi_{2,\,4A_1}}
\in J_{0, 4A_1}^{(weak)}.
$$

We can  consider $\Delta_{2,\, 4A_1}$ as a modular form
with respect to  $\Tilde\Orth^+(2U\oplus D_4(-2))$.
We note that $\psi_{2,4A_1}(\tau,-\gz_4)=\psi_{2,4A_1}(\tau,\gz_4)$,
and the same property has its lifting. Therefore
$$
\Delta_{2,\, 4A_1}\in M_2(\Tilde\Orth^+(2U\oplus D_4(2)),\chi_2).
$$
More exactly, $\Delta_{2,\, 4A_1}$ is anti-invariant under the action
of  $4$ reflections with respect to $\pm a_i/2$ and invariant
with respect to any permutation of $a_i$. Therefore,
$\Delta_{2,\, 4A_1}$ vanishes along  the devisors defined by
only $8$ of $24$ vectors of square $\frac12$ from Lemma \ref{orbitsD4-2}.
As in (\ref{F2U(2)+D4}), we put
\begin{equation}\label{FU(4)+D4}
F_{6}=\Delta_{2, 4A_1}\cdot (\Delta_{2, 4A_1}|\sigma_1)\cdot
(\Delta_{2, 4A_1}|\sigma_2)\in M_{6}(\Orth^+(2U\oplus D_4(2)),\chi_2)
\end{equation}
where
$\sigma_1=\sigma_{(a_1+a_2+a_3-a_4)/4},
\sigma_2=\sigma_{(a_1+a_2+a_3+a_4)/4}\in
\Orth^+(2U\oplus D_4(2))$.
We note that $\Delta_{2, 4A_1}|\sigma_1$ and $\Delta_{2, 4A_1}|\sigma_2$
are reflective. (Compare with the functions from the previous subsection.)
The product of three Jacobi liftings is a strongly reflective
modular form with the complete $\frac12$-divisor.
Moreover, $F_6$ is anti-invariant with respect to twenty four
$\frac12$-reflections in $D_4(2)$ and invariant with respect to all
permutations of $a_i$. Therefore, $F_6$ is modular with respect
to the full orthogonal group $\Orth^+(2U\oplus D_4(2))$
since $\Orth (D_4(2))=\Orth (D_4)$.
Theorem \ref{Thm-U(4)+D_4} is proved.
\medskip

\noindent
\subsection{The reflective tower of Jacobi liftings for $U(4)\oplus kA_1$,
$k\le 3$.}
\label{U(4)kA1}
\medskip

In this subsection, we construct seven reflective modular forms.
This $4A_1$-tower is based on the reflective modular form of singular weight
$\Delta_{2,4A_1}$ (see (\ref{Delta4A_1}))
and starts with  the  Igusa modular form $\Delta_5$ considered
as Borcherds product in \cite{GN1}.

\begin{theorem} \label{U(4)pluskA_1}
The automorphic correction
of the $2$-root system $U(4)\oplus kA_1 $ ($1\le k\le 3$)
is given by
\begin{equation}\label{4A_1-tower}
\Delta_{6-k,kA_1}=\Lift(\eta^{3k}\vartheta(z_1)\vartheta(z_2)
\vartheta(z_3))
\in S_{6-k}(\Orth(2U\oplus kA_1 )).
\end{equation}
\end{theorem}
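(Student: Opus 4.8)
The plan is to realise $\Delta_{6-k,kA_1}$ as a quasi pull-back of the singular-weight form $\Delta_{2,4A_1}$ of \eqref{Delta4A_1} and to transport reflectivity from the lattice $2U\oplus kA_1$ (where the lifting naturally lives) to the hyperbolic lattice $U(4)\oplus kA_1$ by the renormalisation trick already used in Lemma \ref{-2 in D_4(-2)}. First I would record the lattice-theoretic dictionary. Since $A_1^*(4)\cong A_1$, formula \eqref{M*(n)} gives $(2U(4)\oplus kA_1)^*(4)\cong 2U\oplus kA_1$, hence
\begin{equation*}
\Orth(2U(4)\oplus kA_1)=\Orth(2U\oplus kA_1),
\end{equation*}
and the two lattices define the same domain $\Omega$. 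Under this identification a primitive $2$-vector $v\in 2U(4)\oplus kA_1$ satisfies $\tfrac{v}{4}\in (2U\oplus kA_1)^*=2U\oplus kA_1^*$ with $(\tfrac{v}{4})^2=\tfrac12$, so the walls of the $2$-reflections of $2U(4)\oplus kA_1$ are exactly the walls of the $\tfrac12$-vectors of $2U\oplus kA_1^*$. For $k\le 3$ every norm $\tfrac12$ vector of $kA_1^*=\bigoplus\langle\tfrac12\rangle$ is $\pm a_i/2$, so by the Eichler criterion (exactly as in Lemma \ref{orbitsD4-2}) the complete $2$-divisor of $2U(4)\oplus kA_1$ corresponds to the single $\Orth^+$-orbit $\bigcup_{i=1}^k\{z_i=0\}$. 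Thus it suffices to produce a holomorphic cusp form on $\Omega(2U\oplus kA_1)$ whose divisor is precisely this union, each component with multiplicity one.

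Next I would construct the form by the Jacobi (additive) lifting. The function $\psi_{6-k,\,kA_1}=\eta(\tau)^{12-3k}\,\vartheta(z_1)\cdots\vartheta(z_k)$ is a holomorphic Jacobi form of weight $6-k$ and index $\tfrac12$ for $kA_1$ (with a $v_\eta$- and $v_H$-character), and its $q$-expansion begins with $q^{1/2}$, so $f(0,\ell)=0$ and it is a Jacobi cusp form. By the half-integral index lifting construction (\cite[Theorem 1.12]{GN4}, \cite[Theorem 2.2]{CG2}) its image $\Lift(\psi_{6-k,kA_1})=\Delta_{6-k,kA_1}$ lies in $S_{6-k}(\Orth^+(2U\oplus kA_1),\chi_2)$, with integral Fourier coefficients since those of $\psi_{6-k,kA_1}$ are integral. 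I would then identify $\Delta_{6-k,kA_1}$ with the quasi pull-back $\Delta_{2,4A_1}|_{z_{k+1}=\dots=z_4=0}$: because $\vartheta$ is odd, setting a variable to zero kills the naive restriction, and the genuine restriction replaces each removed factor $\vartheta(z_j)$ by $(2\pi i)^{-1}\partial_z\vartheta|_{z=0}=\eta^3$, reproducing the prefactor $\eta^{12-3k}=\eta^{3(4-k)}$ (the same mechanism as in the $D_8$-tower of Theorem \ref{LiftD}); for $k=1$ this recovers the Igusa form $\Delta_5$.

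It then remains to compute the divisor. Each factor $\vartheta(\tau,z_i)$ has simple zeros exactly along $z_i\in\ZZ\tau+\ZZ$, so $\Delta_{6-k,kA_1}$ vanishes to order one along $z_i=0$; as the lift is anti-invariant under the reflections $\sigma_{a_i/2}$ (acting by $z_i\mapsto -z_i$) and invariant under permutations of the $a_i$, its divisor contains the whole orbit $\bigcup_i\{z_i=0\}$ with multiplicity one. To show there is nothing more I would pass to the Borcherds product: the weight-$0$ nearly holomorphic Jacobi form governing $\Delta_{2,4A_1}$ restricts to a weak Jacobi form $\varphi_{0,kA_1}\in J^{w}_{0,kA_1}$ whose indices of negative hyperbolic norm account for every irreducible component of $\divv\Delta_{6-k,kA_1}$, just as in Proposition \ref{DeltaD7-Phi}; combined with the enumeration of $\tfrac12$-vectors above, this forces the divisor to be exactly the complete $\tfrac12$-divisor. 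Transporting back through the renormalisation, $\Delta_{6-k,kA_1}$ is strongly $2$-reflective with the complete $2$-divisor of $2U(4)\oplus kA_1$, i.e. the automorphic correction of the $2$-root system of $U(4)\oplus kA_1$; the leading monomial $q^Ar^Bs^C$ of its Borcherds product then yields the lattice Weyl vector, which one checks equals the $\rho=(c_1+c_2-e_1-\dots-e_k)/2$ recorded in Theorem \ref{th:el2refW}, exactly as in Theorem \ref{thm-affineKM}.

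The main obstacle is the exactness of the divisor, namely proving simultaneously that no unexpected components appear and that every required wall is attained with multiplicity one. I expect the clean route is not to analyse the lifting directly but to lean on the already-classified base form $\Delta_{2,4A_1}$ (whose complete $\tfrac12$-divisor is established in \cite[Theorem 5.1]{G10}) together with its Jacobi-type Borcherds product, since the quasi pull-back operation controls exactly how that divisor degenerates under $z_{k+1}=\dots=z_4=0$.
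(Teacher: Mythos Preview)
Your proposal is correct and follows essentially the same route as the paper: you prove the lattice dictionary $\Orth(2U(4)\oplus kA_1)=\Orth(2U\oplus kA_1)$ with $2$-roots $\leftrightarrow$ $\tfrac12$-vectors (this is exactly Lemma~\ref{4A1}), realise $\Delta_{6-k,kA_1}$ as the quasi pull-back of the singular-weight form $\Delta_{2,4A_1}$ at $z_{k+1}=\dots=z_4=0$, and then invoke the known complete $\tfrac12$-divisor of $\Delta_{2,4A_1}$ from \cite[\S5]{G10} together with the Borcherds-product description $\varphi_{0,kA_1}=\varphi_{0,4A_1}|_{z_{k+1}=\dots=z_4=0}$ to pin down the divisor of the pull-back. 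You have in fact silently corrected the typo in the displayed formula \eqref{4A_1-tower}: the Jacobi form must be $\eta^{12-3k}\vartheta(z_1)\cdots\vartheta(z_k)$ (so that the weight is $6-k$), consistent with the paper's own proof text where the same function is written with the complementary index as $\eta^{3k}\vartheta(z_1)\cdots\vartheta(z_{4-k})\in J_{2+k,(4-k)A_1}$.
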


Similar to Lemma \ref{8A1} and Lemma \ref{-2 in D_4(-2)},
we get
\begin{lemma}\label{4A1} Assume $1\le k\le 4$. Then
$$
\Orth(2U(4)\oplus kA_1 )=\Orth(2U\oplus kA_1 ).
$$
The $2$-reflective vectors of $2U(4)\oplus kA_1 $
correspond to $\frac 12$-reflective vectors of $2U\oplus kA_1^* $.
These vectors belong to $k$ different $\Tilde \Orth^+(2U\oplus kA_1 )$-orbits
or one $\Orth^+(2U\oplus kA_1 )$-orbit.
\end{lemma}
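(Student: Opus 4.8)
The plan is to follow the renormalisation trick used in Lemma \ref{8A1} and Lemma \ref{-2 in D_4(-2)}, proving the three assertions in turn. First I would establish the identity of orthogonal groups. Writing $M=2U(4)\oplus kA_1$, its dual is $M^*=2U(1/4)\oplus kA_1^*$, and the invariance \eqref{M*(n)} with $n=4$ gives $\Orth(M)=\Orth(M^*)=\Orth(M^*(4))$. Since $U(4)^*(4)=U$ and $A_1^*(4)=\latt{1/2}(4)=\latt{2}=A_1$, one computes $M^*(4)=2U\oplus kA_1$, whence $\Orth(2U(4)\oplus kA_1)=\Orth(2U\oplus kA_1)$. (These two lattices are not isometric, having different determinants; the equality is only of their isometry groups, obtained through the common lattice $M^*$.)

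Next I would set up the correspondence of reflective vectors. The dual of the rescaled lattice is $(M^*(4))^*=M(1/4)=2U\oplus kA_1^*$, using $A_1(1/4)=\latt{1/2}=A_1^*$, and on $M(1/4)$ the form is $\tfrac14$ times that of $M$. Hence the identity on underlying groups carries the $2$-vectors of $M$ bijectively onto the $\tfrac12$-vectors of $2U\oplus kA_1^*$. Such $\tfrac12$-vectors are automatically primitive, since $\tfrac12$ is the minimal positive norm of $2U\oplus kA_1^*$, attained by the $a_i/2\in A_1^*$ (where $kA_1=\oplus_{i=1}^k\ZZ a_i$, $a_i^2=2$). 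As a reflection is determined by its mirror alone, $\sigma_v$ for a $2$-vector $v$ agrees, under the identification of groups from the first step, with the reflection in the corresponding $\tfrac12$-vector; this is the asserted correspondence between the $2$-reflective vectors of $2U(4)\oplus kA_1$ and the $\tfrac12$-reflective vectors of $2U\oplus kA_1^*$.

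Finally I would count orbits by the Eichler criterion (\cite{G2}, \cite{GHS4}), which applies because $2U\oplus kA_1$ contains the two orthogonal hyperbolic planes $2U$: the $\Tilde\Orth^+(2U\oplus kA_1)$-orbit of a primitive $\tfrac12$-vector $w$ is determined by its image $\bar w$ in the discriminant group $(2U\oplus kA_1)^*/(2U\oplus kA_1)\cong kA_1^*/kA_1\cong(\ZZ/2\ZZ)^k$, generated by the classes $\bar c_i$ of $a_i/2$. The discriminant quadratic form satisfies $q\bigl(\sum_{i\in I}\bar c_i\bigr)\equiv |I|/2 \pmod{2\ZZ}$, while $w^2=\tfrac12$ forces $q(\bar w)\equiv \tfrac12$, i.e.\ $|I|\equiv 1\pmod 4$. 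For $k\le 4$ this means $|I|=1$, so $\bar w\in\{\bar c_1,\dots,\bar c_k\}$ and there are exactly $k$ stable orbits. Since $\Orth^+(2U\oplus kA_1)$ contains the permutations of the summands $\ZZ a_i$ — which fix $2U$, hence lie in $\Orth^+$, and act transitively on $\bar c_1,\dots,\bar c_k$ — these $k$ orbits fuse into a single $\Orth^+$-orbit.

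I expect the only real subtlety to lie in the normalisation bookkeeping of the second step (matching $\sigma_v$ with the rescaled reflection and pinning the square to exactly $\tfrac12$) together with the discriminant-form computation in the third. In particular, it is precisely the hypothesis $k\le 4$ that rules out the class $\sum_{i\in I}\bar c_i$ with $|I|=5$, and thereby guarantees exactly $k$ stable orbits rather than more.
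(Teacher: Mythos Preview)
Your proof is correct and follows precisely the approach the paper intends: the paper states the lemma with the remark ``Similar to Lemma~\ref{8A1} and Lemma~\ref{-2 in D_4(-2)}'' and gives no further argument, and you have simply spelled out that rescaling argument \eqref{M*(n)} together with the Eichler criterion, exactly as in those earlier lemmas. Your discriminant-form computation showing that $|I|\equiv 1\pmod 4$ forces $|I|=1$ when $k\le 4$ is the right way to see why the bound on $k$ matters.
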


We note that the basic  reflective modular form of this tower
$\Delta_{2,4A_1}=\Lift(\vartheta(z_1)\vartheta(z_2)\vartheta(z_3)
\vartheta(z_4))$ defines {\it an automorphic correction
of the  hyperbolic $2$-root system $U(4)\oplus 4A_1 $
of parabolic type  with a lattice Weyl vector of norm $0$}.
We take three consecutive quasi pull-backs of this modular form
for $z_4=0$, $z_3=0$ and  $z_2=0$ and get  three strongly reflective
cusp forms for $k=1,\,2,\,3$
$$
\Lift(\eta^{3k}\vartheta(z_1)\cdots
\vartheta(z_{4-k}))
\in S_{2+k}(\Tilde\Orth^+(2U\oplus (4-k)A_1 ))
$$
with the complete $\frac 12$-divisor described in  Lemma \ref{4A1}
(see \cite[\S 5]{G10}).
The Fourier expansion of the quasi pull-backs can be written
in terms of the Fourier coefficients of $\eta(\tau)^{3k}$.
Here we give  the formula for the first cusp modular form of this tower
which contains only elementary functions:
$$
\Delta_{3,3A_1}(\tau,\gz_3,\omega)=\Lift(\eta^{3}\vartheta(z_1)\vartheta(z_2)
\vartheta(z_3))=
$$
$$
\sum_{\substack{
n\equiv m\equiv 1\,{\rm mod\,}2\\
\vspace{1\jot}
l_i\equiv 1 {\rm mod\,} 2,\\
\vspace{1\jot}
4nm-l_1^2-l_2^2-l_3^2=N^2}}
N\biggl(\frac{-4}{Nl_1l_2l_3}\biggr)\sigma_1((n,\ell,m))
\exp(\pi i (n\tau+l_1z_1+l_2z_2+l_3z_3+m\omega)).
$$
As in the case the reflective modular  $D_8$-tower
(see (\ref{phi0D8}) and Proposition \ref{DeltaD7-Phi}),
we give different constructions of the Borcherds product
for $\Delta_{3,3A_1}$.
First (see (\ref{Hecke}) and  \cite[\S 5]{G10}), we have two
formulae for the weak Jacobi form
$$
\varphi_{0,3A_1}(\tau,\gz_3)=3^{-1}\frac{\psi_{5,\,3A_1}|\, T_{-}(3)}{\psi_{5,\,3A_1}}
=\varphi_{0,4A_1}(\tau,\gz_4)|_{z_4=0}\in J_{0, 3A_1}^{weak}
$$
such that $\Delta_{3,3A_1}=B_{\phi_{0,3A_1}}$, see (\ref{BNR}).
Similar formulae are valid for $\phi_{0,2A_1}$ and $\phi_{0,A_1}$.

Second, we can construct the modular forms of the reflective $4A_1$-tower
using some $2$-reflective modular forms of Theorem \ref{thm-Phi}.

\begin{proposition}\label{Phi-3A1}
$$
\Delta_{3,3A_1}^2=
\frac
{\Phi_{39, 3A_2}|_{3A_1\emb 3A_2 }}{\Phi_{33, 3A_1}},\quad
\Delta_{4,2A_1}^2=
\frac
{\Phi_{42, 2A_2}|_{2A_1\emb 2A_2 }}{\Phi_{34, 2A_1}},
$$
$$
\Delta_{5,A_1}^2=
\frac
{\Phi_{45,A_2}|_{A_1\emb A_2 }}{\Phi_{35, A_1}}.
$$
\end{proposition}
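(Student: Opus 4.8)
The plan is to prove each of the three identities in Proposition~\ref{Phi-3A1} by the \emph{divisor-and-weight} method already used in the proof of Proposition~\ref{DeltaD7-Phi}. I focus on the first identity; the other two are identical in structure with $3A_1,3A_2$ replaced by $2A_1,2A_2$ and $A_1,A_2$. First I would compute the divisor of the quasi pull-back $\Phi_{39,3A_2}|_{3A_1\emb 3A_2}$. Under the embedding $3A_1\emb 3A_2$, with $3A_1=\latt{a_1}\oplus\latt{a_2}\oplus\latt{a_3}$ sitting inside $3A_2$, I would argue exactly as in the proof of Theorem~\ref{Tmain}: decompose each $2$-root $v\in II_{26,2}$ as $v=\alpha+\beta$ with $\alpha=\pr_{T^*}(v)$, and sort the projections $\alpha$ by their norm $0<\alpha^2\le 2$. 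The key point, as in Proposition~\ref{DeltaD7-Phi}, is that the quasi pull-back acquires an extra factor of multiplicity two along the walls orthogonal to the half-integral vectors $v/2\in(2U\oplus 3A_1)^*$ coming from $A_2^*$-projections of norm $<2$. Concretely I expect
\[
\divv \Phi_{39,3A_2}|_{3A_1\emb 3A_2}=
\Sum_{\substack{\pm u\in 2U\oplus 3A_1\\ u^2=2}}\cD_u
+\Sum_{\substack{\pm v\in 2U\oplus 3A_1\\ v^2=6,\ v/3\in (3A_1)^*}}2\,\cD_v ,
\]
where the $2$-divisor reproduces exactly $\divv\Phi_{33,3A_1}$.

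Next I would take the quotient. Dividing by $\Phi_{33,3A_1}$, which by Theorem~\ref{thm-Phi} is strongly $2$-reflective with the complete $2$-divisor $\sum_{u^2=2}\cD_u$, cancels precisely the first sum, leaving
\[
\divv\frac{\Phi_{39,3A_2}|_{3A_1\emb 3A_2}}{\Phi_{33,3A_1}}
=2\Sum_{v}\cD_v
\]
supported on the half-integral (renormalised $\tfrac12$-) vectors. I would then read off the weight: the quasi pull-back has weight $39$ by Theorem~\ref{thm-Phi} and $\Phi_{33,3A_1}$ has weight $33$, so the quotient has weight $6$, i.e.\ twice the weight $3$ of $\Delta_{3,3A_1}$. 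Since the divisor of the quotient is $2\sum_v\cD_v$ while $\Delta_{3,3A_1}$ is strongly reflective of weight $3$ with divisor $\sum_v\cD_v$ (this is the content of Theorem~\ref{U(4)pluskA_1} together with Lemma~\ref{4A1}), both $\Delta_{3,3A_1}^2$ and the quotient are holomorphic modular forms of weight $6$ with the same divisor $2\sum_v\cD_v$.

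Finally I would conclude by the K\"ocher principle and a one-dimensional uniqueness argument, just as at the end of the proof of Proposition~\ref{DeltaD7-Phi}: the quotient of the two forms is a holomorphic modular form of weight $0$ on $2U\oplus 3A_1$ with trivial divisor, hence (by K\"ocher, since $\rk\ge 3$) a constant, which is then normalised to $1$ by comparing leading Fourier--Jacobi coefficients — both sides begin with the Borcherds-product leading term $q^{A}r^{B}s^{C}$ whose exponents are computed from \eqref{Weyl-vector}. The hard part will be the bookkeeping in the divisor computation of the quasi pull-back: one must verify that every $2$-root of $II_{26,2}$ projects either to a genuine $2$-vector of $2U\oplus 3A_1$ or to a $\tfrac12$-vector with the doubled multiplicity, and that \emph{no} other components (e.g.\ projections of norm strictly between the two values, or roots lying entirely in $T^\perp$) appear. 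This requires a careful analysis of $(3A_2)^*/3A_2\cong C_3^{\,3}$ and the $(\mathrm{Norm}_2)$-type control of representatives, analogous to the $D_7^*$ computation in Proposition~\ref{DeltaD7-Phi}; once that is settled the weight count and the K\"ocher argument are immediate.
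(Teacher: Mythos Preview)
Your overall strategy --- compute the divisor of the quasi pull-back of $\Phi_{39,3A_2}$, cancel the $2$-part against $\Phi_{33,3A_1}$, match weights, and finish with K\"ocher --- is exactly the paper's approach, modelled on Proposition~\ref{DeltaD7-Phi}. The weight count ($39-33=6=2\cdot 3$) is correct, and the conclusion via K\"ocher is fine.

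However, your displayed divisor formula for the extra component is wrong. You write
\[
\Sum_{\substack{\pm v\in 2U\oplus 3A_1\\ v^2=6,\ v/3\in (3A_1)^*}}2\,\cD_v,
\]
but there is no ``$v^2=6$, $v/3$'' here. Take one copy $A_1=\latt{u}\subset A_2=\latt{u,v}$ with $(u,v)=-1$; then $A_1^\perp=\latt{u+2v}\cong\latt{6}$ has \emph{no} $2$-roots, so the quasi pull-back is the plain restriction and the weight stays $39$. The $A_2$-roots not in $A_1$ are $\pm v$, $\pm(u+v)$; their orthogonal projection to $A_1^*$ is $\mp u/2$, resp.\ $\pm u/2$, of norm $\tfrac12$ (the orthogonal piece lies in $\latt{6}^*$ with norm $\tfrac32$). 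Thus the extra divisors are along $\tfrac12$-vectors $\alpha\in(2U\oplus 3A_1)^*$, $\alpha^2=\tfrac12$, and for each such $\alpha$ there are exactly two roots $\alpha\pm w/2$ (with $w$ the generator of the relevant $\latt{6}$) projecting to it --- hence multiplicity $2$. This is precisely $2\cdot\divv\Delta_{3,3A_1}$ by Lemma~\ref{4A1}. So replace your ``$v^2=6,\,v/3$'' by $\alpha\in(2U\oplus 3A_1)^*$, $\alpha^2=\tfrac12$ (equivalently, $2$-vectors $v\in 2U\oplus 3A_1$ with $v/2\in(2U\oplus 3A_1)^*$). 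Also, the control you need is not the discriminant $(3A_2)^*/3A_2$, but simply the finite glue $3A_2/(3A_1\oplus 3\latt{6})$: any root with nontrivial $\latt{6}$-part \emph{in the lattice} has $\beta^2\ge 6$, so its $T^*$-projection has $\alpha^2\le -4$ and contributes nothing; only the glue class gives $\beta^2=\tfrac32$ and $\alpha^2=\tfrac12$. Once you fix this bookkeeping, your proof goes through verbatim.
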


\begin{proof}
We consider the case $k=3$.
We embed $A_1=\latt{u}$ in $A_2\latt{u,v}$ where $u^2=v^2=2$.
Then $A_1\perp \latt{6}\subset A_2$, and two pairs  $\pm v$ and $\pm (u+v)$
of $A_2$-roots have ``small'' orthogonal  projections of norm $1$ on $u$.
We take this embedding for $3$ copies $3A_1\emb 3A_2$.
We note that the lattices $3A_2$  and $3A_1$ satisfy the $({\rm Norm}_2)$ condition
of Theorem \ref{Tmain}.
Therefore, the arguments from the proof of Theorem \ref{Tmain}
show that the pull-back $\Phi_{39, 3A_2}|_{3A_1\emb 3A_2 }$
has weight $39$ and  vanishes along all $2$-divisors
and additionally along  $1$-divisors corresponding to
$1$-vectors of $2U\oplus 3A_1^* $. The $1$-divisors have multiplicity
$2$. Dividing this pull-back by the strongly reflective form
${\Phi_{33, 3A_1}}$, we get  $\Delta_{3,3A_1}^2$ according to the K\"ocher principle.
\end{proof}

\begin{remark} \label{remDelta5Phi}
{\rm 1) We note that $\Delta_{5,A_1}^2$ is equal to the Igusa
modular form $\Psi_{10}\in S_{10}(\Sp_2(\ZZ))$ which is the first Siegel
cusp form of weight $10$.  The Borchers product formula for $\Psi_{10}$
was constructed in \cite{GN1} (see also \cite{GN6} for other constructions
of the Igusa modular form).
Proposition \ref{Phi-3A1} gives a new model of  this very important function.
The function $\Delta_{5,A_1}$ is the automorphic correction of
the hyperbolic root system with Cartan matrix $$
\begin{pmatrix} \ \,2&-2&-2\\-2&\ \,\,2&-2\\-2&-2&\ \,\,2\end{pmatrix}
$$
(see \cite{GN1}).
\smallskip

2) The quotient
$$
\frac{\Phi_{32+k, (4-k)A_1}}{\Delta_{2+k,\,(4-k)A_1}}
$$
for $0\le k\le 3$ is a holomorphic strongly reflective modular form.
It  defines a Lorentzian Kac--Moody algebra of a hyperbolic root system
of $U\oplus (4-k)A_1$  whose Weyl group is smaller than
the full Weyl group  generated by all $2$-reflections
in $U\oplus (4-k)A_1$. The Cartan matrix of such  Lorentzian Kac--Moody algebra
for $U\oplus A_1$ was found in \cite{GN6}.}
\end{remark}

\medskip

\noindent
\subsection{The reflective tower $U(3)\oplus kA_2$, $1\le k\le 3$.}
\label{U(3)kA_2}
\medskip

{\it In this subsection, we define six reflective modular forms}
using  a  modular form of singular weight for the lattice $2U\oplus 3A_2 $
proposed in  \cite[\S 4]{G10}.
This  modular form also gives  interesting  series of canonical
differential forms on Siegel modular three-folds constructed with
theta-blocks (see \cite{GPY}).

\begin{lemma}\label{J-A2}
The function
\begin{equation}\label{sigmaA2}
\sigma_{A_2}(\tau,\gz)=\sigma_{A_2}(\tau, z_1,z_2)
=\frac{\vartheta(\tau,z_1)\vartheta(\tau,z_2)\vartheta(\tau,z_1+z_2)}
{\eta(\tau)} \in J_{1,A_2}(v_\eta^8)
\end{equation}
is a holomorphic Jacobi form of singular weight
which is anti-invariant with respect to the $6$-reflections
from $\Orth(A_2)$.
\end{lemma}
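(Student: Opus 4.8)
The plan is to recognise $\sigma_{A_2}$ as the affine Kac--Weyl denominator of $\hat{\mathfrak g}(A_2)$ and to read off its Jacobi properties from the transformation laws of $\vartheta$ and $\eta$. By Remark \ref{remafflie} applied to $K=A_2$ (which has $\rk A_2=2$ and three positive roots), one has
$\eta^{2}\prod_{v\in R(A_2)_{>0}}\vartheta(\tau,(v,\gz))/\eta
=\vartheta(\tau,z_1)\vartheta(\tau,z_2)\vartheta(\tau,z_1+z_2)/\eta(\tau)=\sigma_{A_2}$,
where $z_1=(\alpha_1,\gz)$, $z_2=(\alpha_2,\gz)$ and $z_1+z_2=(\alpha_1+\alpha_2,\gz)$ for the simple roots $\alpha_1,\alpha_2$. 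Thus $\sigma_{A_2}$ is exactly the denominator function, and all assertions will follow from this identification together with elementary bookkeeping.

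First I would settle the weight and the character. Recall from (\ref{vth-sum}) that $\vartheta\in J_{\frac12,\frac12}(v_\eta^3\times v_H)$ and that $\eta$ is modular of weight $\frac12$ with multiplier $v_\eta$. The product of the three theta--factors has weight $\frac32$ and $\SL_2(\ZZ)$--multiplier $v_\eta^{9}$, and dividing by $\eta$ lowers the weight to $\frac32-\frac12=1$ and the multiplier to $v_\eta^{9}/v_\eta=v_\eta^{8}$; the three binary Heisenberg characters $v_H^{3}=v_H$ recombine with the lattice structure into the elliptic transformation for $A_2$. Since $\rk A_2=2$, the weight $1=\rk(A_2)/2$ is precisely the singular weight.

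Next I would verify the elliptic transformation and the index. Using the quasi--periodicity $\vartheta(\tau,z+a\tau+b)=(-1)^{a+b}q^{-a^{2}/2}e^{-2\pi i a z}\vartheta(\tau,z)$ for $a,b\in\ZZ$ (immediate from the sum in (\ref{vth-sum})), I translate $\gz\mapsto\gz+\lambda\tau+\mu$ with $\lambda,\mu\in A_2$, so that each argument $(v,\gz)$ is shifted by the integers $(v,\lambda)\tau+(v,\mu)$. Collecting the three factors and using the root--system identities for $\sum_{v>0}(v,\lambda)^{2}$ and $\sum_{v>0}(v,\lambda)(v,\gz)$, together with the fact that $\sum_{v>0}(v,\lambda)=2(p+q)$ is even for $\lambda=p\alpha_1+q\alpha_2$ (so the accumulated sign $(-1)^{\sum(v,\lambda)+\sum(v,\mu)}$ equals $+1$), I read off the index quadratic form $\tfrac12\bigl(z_1^{2}+z_2^{2}+(z_1+z_2)^{2}\bigr)$, whose Gram matrix $\left(\begin{smallmatrix}2&1\\1&2\end{smallmatrix}\right)$ is isometric to $A_2$, giving index $1$ in the normalisation of the paper. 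Holomorphy at the cusp is then clear from the product expansion: the three factors contribute $q^{3/8}$ and $\eta^{-1}$ contributes $q^{-1/24}$, so the $q$--order of $\sigma_{A_2}$ is $3/8-1/24=1/3>0$ and no index of negative norm occurs; hence $\sigma_{A_2}$ is a genuinely holomorphic (not merely weak) Jacobi form, and being of singular weight its Fourier support lies on the indices with $(\ell,\ell)=2n$.

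Finally, for the anti--invariance I would argue directly. For a root $v\in A_2$ and the reflection $\sigma_v$ one has $(w,\sigma_v\gz)=(\sigma_v w,\gz)$, so $\sigma_v$ permutes the three theta--arguments through its action on the roots; since $\sigma_v$ sends an odd number $\ell(\sigma_v)$ of positive roots to negative ones and $\vartheta(\tau,-z)=-\vartheta(\tau,z)$, the product acquires the single sign $(-1)^{\ell(\sigma_v)}=-1$, while $\eta$ is unchanged. Hence $\sigma_{A_2}(\tau,\sigma_v\gz)=-\sigma_{A_2}(\tau,\gz)$ for each reflection attached to the six roots of $A_2$, i.e.\ $\sigma_{A_2}$ transforms by the sign character of $W(A_2)\subset\Orth(A_2)$. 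The only genuinely delicate point is the index normalisation: one must confirm that the contributions of the three rank--one theta--factors assemble into the standard $A_2$--form rather than a rescaled copy, and that the three Heisenberg characters recombine without leaving a residual elliptic multiplier; both reduce to the elementary identities above.
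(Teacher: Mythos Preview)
Your overall strategy---recognise $\sigma_{A_2}$ via the Kac--Weyl denominator, read off weight and character from $\vartheta$ and $\eta$, and then check the reflection behaviour---is exactly the paper's approach. The weight--character bookkeeping and the holomorphy check are fine. The gap is in the anti-invariance step, and it comes from a coordinate confusion you flag yourself at the end but do not resolve.

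You identify $z_i=(\alpha_i,\gz)$ and prove anti-invariance under the reflections in the six \emph{$2$-roots}, i.e.\ under $W(A_2)$. But the lemma asserts anti-invariance under the \emph{$6$-reflections} of $\Orth(A_2)$, and these are \emph{not} in $W(A_2)$: the $6$-vectors $3\lambda_1,\,3\lambda_2,\,3(\lambda_1-\lambda_2)$ give the three reflections lying in the nontrivial coset of $W(A_2)$ in $\Orth(A_2)\cong W(G_2)$. In fact, if you take the $(z_1,z_2)$-lattice with Gram matrix $\left(\begin{smallmatrix}2&1\\1&2\end{smallmatrix}\right)$ (the one for which $\sigma_{A_2}\in J_{1,A_2}$), then $e_1,e_2,e_1-e_2$ are the $2$-roots and $e_1+e_2,\,2e_1-e_2,\,-e_1+2e_2$ are the $6$-vectors; a direct check shows, for instance, that $\sigma_{e_1}$ sends $(z_1,z_2)\mapsto(-z_1-z_2,z_2)$ and leaves the product $\vartheta(z_1)\vartheta(z_2)\vartheta(z_1+z_2)$ \emph{invariant}, while $\sigma_{e_1+e_2}$ sends $(z_1,z_2)\mapsto(-z_2,-z_1)$ and changes the sign. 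So in the relevant $A_2$ your conclusion ``sign character of $W(A_2)$'' is the wrong way round.

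The source of the mix-up is that the assignment $\gz\mapsto\bigl((\alpha_1,\gz),(\alpha_2,\gz)\bigr)$ intertwines the $2$-reflections on the $\gz$-copy of $A_2$ with the $6$-reflections on the $(z_1,z_2)$-copy (and vice versa); equivalently, your identification gives $\sigma_{A_2}$ as a Jacobi form of index $h(A_2)=3$ for $A_2$, not index $1$. The paper avoids this by writing
\[
\sigma_{A_2}(\tau,\gz)=-\frac{\vartheta(\tau,(\gz,\lambda_1))\,\vartheta(\tau,(\gz,\lambda_2))\,\vartheta(\tau,(\gz,\lambda_1-\lambda_2))}{\eta(\tau)}
\]
with $\lambda_i$ the fundamental weights, so that the theta arguments are directly proportional to the $6$-vectors $3\lambda_1,3\lambda_2,3(\lambda_1-\lambda_2)$, and the $6$-reflection anti-invariance is immediate (each $6$-reflection negates one argument and swaps the other two, giving a net sign $-1$). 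To repair your argument you must either pass to these dual coordinates or explicitly track that your $W(A_2)$-action on $\gz$ becomes the set of $6$-reflections on the index-$1$ lattice.
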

\begin{proof}
By construction, $\sigma_{A_2}$ is a weak Jacobi form.
See \cite[Corollary 3.4]{CG2} for a modular proof that
this is a holomorphic Jacobi form.

One can see the same using the theory of affine Lie algebras.
$\sigma_{A_2}$ is a dual variant of the
denominator function of the affine Lie algebra $\hat{\mathfrak g}(A_2)$
(see the remark after Theorem \ref{thm-affineKM}) which is a holomorphic Jacobi form.
Let $v_1$ and $v_2$ be the simple roots of $A_2$ ($v_1^2=v_2^2=2$ and
$(v_1,v_2)=-1$) and
$\lambda_1=(2v_1+v_2)/3$ and  $\lambda_2=(v_1+2v_2)/3$
is the corresponding dual basis of $A_2^\ast$, that is
$(v_i,\lambda_j)=\delta_{i,j}$).
Then $\pm 3\lambda_1$, $\pm 3\lambda_2$ and $\pm 3(\lambda_1-\lambda_2)$
are reflective vectors of square $6$ in  $A_2$, and
$$
\sigma_{A_2}(\tau, \gz)=
-\frac{\vartheta(\tau,(\gz, \lambda_1))\vartheta(\tau, (\gz,\lambda_2))
\vartheta(\tau, (\gz, \lambda_1-\lambda_2))}{\eta(\tau)}
\qquad (\gz\in A_2\otimes \CC).
$$
We see that this product is anti-invariant with respect to
the $6$-reflections.
\end{proof}

The direct product of three copies of $\sigma_{A_2}(\tau,\gz)$
is a Jacobi form of singular weight
$$
\sigma_{3A_2}(\tau, \gz_1+\gz_2+\gz_3)=
\prod_{i=1}^3\sigma_{A_2}(\tau,\gz_i)\in  J_{3,3A_2}
$$
with trivial character.
It was proved in \cite[Theorem 4.2]{G10} that its lifting
$$
\Delta_{3,3A_2}=\Lift(\sigma_{3A_2})\in M_3(\Tilde\Orth^+(2U\oplus 3A_2 ))
$$
is {\it a strongly reflective modular form with the complete $6$-divisor}.
We note that all Fourier coefficients of $\sigma_{3A_2}$ and $\Delta_{3,3A_2}$
are integral.

The quotient group $\Orth^+(2U\oplus 3A_2 )/\Tilde\Orth^+(2U\oplus 3A_2 )$
is isomorphic to the orthogonal group $\Orth(q_{3A_2})$ of the discriminant
form of $(3A_2)^*/(3A_2)\cong (\ZZ/3\ZZ)^3$. We have
$\Orth(q_{3A_2})\cong S_3\times C_2^3$
where  $S_3$ is the group  of  permutations of three copies of  $A_2$,
and the  cyclic group $C_2$ of order $2$ is generated
by a $6$-reflection in $A_2$.
The Jacobi form $\sigma_{3A_2}$ is invariant with respect to the  permutations
of the copies of $A_2$ and anti-invariant with respect to $6$-reflections
due to Lemma \ref{J-A2}. The same properties are valid  for the  lifting
of $\sigma_{3A_2}$.
Therefore, we prove that
$$
\Delta_{3,3A_2}=\Lift(\sigma_{3A_2})\in M_3(\Orth^+(2U\oplus 3A_2 ), \chi_2)
$$
where $\chi_2$ is a binary character of the full orthogonal group
which is trivial
on the stable orthogonal group $\Tilde\Orth^+(2U\oplus 3A_2 )$.
For two   quasi pull-backs of $\Lift(\sigma_{3A_2})$
on  the homogeneous domains of $2U\oplus 2A_2 $ and $2U\oplus A_2$
we get
$$
\Delta_{6,2A_2}=\Lift(\eta^8\cdot\sigma_{2A_2})
\in S_6(\Orth^+(2U\oplus 2A_2 ), \chi_2),
$$
$$
\Delta_{9,A_2}=\Lift(\eta^{16}\cdot\sigma_{A_2})
\in S_9(\Orth^+(2U\oplus A_2 ), \chi_2).
$$
The quasi pull-backs  are also strongly reflective modular forms with
the complete $6$-divisor according to the arguments of Theorem \ref{Tmain}.

The Borcherds product of $\Delta_{3,3A_1}$ is defined by the
weak Jacobi form of weight $0$ with integral Fourier coefficients
(see (\ref{Hecke}))
$$
\varphi_{0,\,3A_2}(\tau,\gz)=
2^{-1}\frac{\sigma_{3,\,3A_2}|_3 T_{-}(2)}{\sigma_{3,\,3A_2}}.
$$
Then
$$
\varphi_{0,\,3A_2}(\tau,\gz)=
6+\sum_{i=1,3,5}
(r_i+r_i^{-1}+r_{i+1}+r_{i+1}^{-1} + r_ir_{i+1}^{-1}+r_i^{-1}r_{i+1})
+q(\dots)
$$
where $r_i=\exp(2\pi i (\gz,\lambda_i))$, $\gz\in (3A_2)\otimes \mathbb C$
and $\lambda_i$ ($i=1,\dots,6$) give the dual bases to simple roots
of the corresponding copies of $A_2$.
The Borcherds products of the reflective modular forms $\Delta_{6,2A_1}$
and $\Delta_{9,A_1}$ are defined by the corresponding pull-backs of
$\varphi_{0,3A_2}$.

\begin{lemma}\label{U(3)+3A_2} We have  that
$\Orth^+(2U(3)\oplus kA_2 )= \Orth^+(2U\oplus kA_2 )$,
and the $2$-reflections of $2U(3)\oplus kA_2$
($k=1$, $2$, $3$) correspond
to the $6$-reflections  of the lattice $2U\oplus kA_2^*(-3)\cong 2U\oplus kA_2$.
\end{lemma}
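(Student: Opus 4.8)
The plan is to imitate the renormalisation argument of Lemma \ref{8A1} and Lemma \ref{4A1}, based on the identity \eqref{M*(n)}, namely $\Orth(M)=\Orth(M^*)=\Orth(M^*(n))$ for $n\in\NN$. Write $M=2U(3)\oplus kA_2$. First I would compute the renormalised dual $M^*(3)$ summand by summand. Since $U$ is unimodular, $U(3)^*=U(1/3)$, and therefore $U(3)^*(3)=U$. For the $A_2$-part, the dual $A_2^*$ has Gram matrix $\frac13\left(\begin{smallmatrix}2&1\\1&2\end{smallmatrix}\right)$, so its renormalisation $A_2^*(3)$ has Gram matrix $\left(\begin{smallmatrix}2&1\\1&2\end{smallmatrix}\right)$, i.e. $A_2^*(3)\cong A_2$. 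Assembling the summands gives
\[
M^*(3)=(2U(3)\oplus kA_2)^*(3)=2U\oplus kA_2^*(3)\cong 2U\oplus kA_2 .
\]
Applying \eqref{M*(n)} with $n=3$ then yields $\Orth(2U(3)\oplus kA_2)=\Orth(2U\oplus kA_2)$. Since multiplication of the bilinear form by the positive scalar $3$ does not change the signs of norms, it preserves the two connected components of the domain $\Omega$, so the equality restricts to $\Orth^+(2U(3)\oplus kA_2)=\Orth^+(2U\oplus kA_2)$, which is the first assertion.

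Next I would trace reflective vectors through the chain $M\subset M^*\to M^*(3)$. Let $\alpha\in M$ with $\alpha^2=2$. Regarded in the common rational span, $\alpha$ is an integral vector of $M^*(3)$, and since the $M^*(3)$-form is $3$ times the $M$-form, its norm there is $(\alpha,\alpha)_{M^*(3)}=3\cdot 2=6$. Because $M\subset M^*$, the pairing $(\alpha,M^*)$ is integral, so $(\alpha,M^*(3))=3(\alpha,M^*)\subset 3\ZZ$ and hence $6\mid 2(\alpha,M^*(3))$; thus $\alpha$ is a root of square $6$ in $M^*(3)\cong 2U\oplus kA_2$. The reflection $\sigma_\alpha$ depends only on its mirror $\alpha^\perp$, so it is literally the same isometry under the identification $\Orth^+(M)=\Orth^+(M^*(3))$ of the first part, now realised as a $6$-reflection of $2U\oplus kA_2$. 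That the $6$-vectors of each $A_2$-summand are genuinely reflective is already recorded in Lemma \ref{J-A2}, which lists $\pm3\lambda_1$, $\pm3\lambda_2$, $\pm3(\lambda_1-\lambda_2)$ as its reflective vectors of square $6$.

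The computations are elementary, so the only delicate point — the one I would treat as the main obstacle — is the orbit bookkeeping needed to decide precisely which $6$-reflections of $2U\oplus kA_2$ arise in this way (in particular, the converse direction), exactly as in parts (2)--(3) of Lemma \ref{8A1} and in Lemma \ref{4A1}. Here I would invoke the Eichler criterion, classifying the relevant primitive reflective vectors by their images in the discriminant form $(\ZZ/3\ZZ)^{k}$ of $2U\oplus kA_2$, and matching the $\Tilde\Orth^+$-orbits of the $2$-vectors of $M$ with those of the $6$-vectors of $2U\oplus kA_2$ class by class. This runs parallel to the earlier towers and should present no new difficulty.
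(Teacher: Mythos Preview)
Your argument is correct and follows essentially the same route as the paper: apply \eqref{M*(n)} with $n=3$, use $U(3)^*(3)=U$ and $A_2^*(3)\cong A_2$ to identify $M^*(3)\cong 2U\oplus kA_2$, and observe that a $2$-vector of $M$ acquires norm $6$ under this rescaling. The paper's proof is terser and does not carry out your final orbit bookkeeping via the Eichler criterion --- the word ``correspond'' in the statement refers only to the bijection induced by the renormalisation, not to a $\Tilde\Orth^+$-orbit count --- but it adds the remark that the $2$- and $6$-roots of $A_2$ together form the root system $G_2$ with $\Orth(A_2)=W(G_2)$, which explains conceptually why the $6$-reflections are genuine reflections of $A_2$.
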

\begin{proof}
The isomorphism of the lemma follows from (\ref{M*(n)}) and the fact that
$A_2^*(3)\cong A_2$.  The $2$-reflections of $2U(3)\oplus kA_2 $
are the $6$-reflections in $(2U(3)\oplus kA_2 )^*(3)$.
We recall that
$[\Orth(A_2) : W_2(A_2)] = 2$
where $\Orth(A_2)$ is the integral  orthogonal group of the lattice $A_2$.
$\Orth(A_2)$ contains reflections with respect to the $2$- and $6$-vectors.
All these roots  form the root system $G_2$, and $\Orth (A_2) = W(G_2)$.
See \cite{GHS3} for the root systems $G_2$ and $F_4$ in the
context of automorphic forms.
\end{proof}

The results, proved above, give
\begin{theorem}\label{lift3A2}
For  $k=1$, $2$ or $3$ the modular form
$$
\Delta_{12-3k,\,kA_2}\in M_{12-3k}({\Orth}^+(2U(3)\oplus kA_2 ),\chi_2)
$$
is strongly $2$-reflective  modular form with the complete $2$-divisor
where $\chi_2$ is a binary character of the orthogonal group.
For $k=1$ and $2$ they are cusp forms.
\end{theorem}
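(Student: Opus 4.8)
The plan is to obtain the theorem directly from the three modular forms $\Delta_{3,3A_2}$, $\Delta_{6,2A_2}$, $\Delta_{9,A_2}$ already constructed above, combined with the renormalisation identity of Lemma~\ref{U(3)+3A_2}. First I would recall precisely what has been established: each $\Delta_{12-3k,\,kA_2}$ (namely $\Lift(\sigma_{3A_2})$, $\Lift(\eta^{8}\sigma_{2A_2})$, $\Lift(\eta^{16}\sigma_{A_2})$ for $k=3,2,1$) is a holomorphic modular form of weight $12-3k$ and binary character $\chi_2$ for $\Orth^+(2U\oplus kA_2)$, and is \emph{strongly reflective with the complete $6$-divisor}, i.e.\ its zero divisor is exactly the sum of the $\cD_v(2U\oplus kA_2)$ over $6$-vectors $v$ of $2U\oplus kA_2$, each with multiplicity one. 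For $k=3$ this is \cite[Theorem~4.2]{G10}; for $k=1,2$ it follows from the quasi pull-back argument of Theorem~\ref{Tmain}, the essential input being the anti-invariance of $\sigma_{A_2}$ under the $6$-reflections of $\Orth(A_2)$ from Lemma~\ref{J-A2}.

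Next I would transfer these facts across the isometry $(2U(3)\oplus kA_2)^*(3)\cong 2U\oplus kA_2$, which rests on $A_2^*(3)\cong A_2$ and $(2U(3))^*(3)\cong 2U$. By Lemma~\ref{U(3)+3A_2} this renormalisation gives $\Orth^+(2U(3)\oplus kA_2)=\Orth^+(2U\oplus kA_2)$ and matches the $2$-reflections of $2U(3)\oplus kA_2$ bijectively with the $6$-reflections of $2U\oplus kA_2$. Since rescaling the quadratic form leaves the Hermitian domain $\Omega$ and the rational quadratic divisors $\cD_v$ unchanged as geometric objects, the function $\Delta_{12-3k,\,kA_2}$ is literally the same form on the same domain, of the same weight and with the same binary character; only the arithmetic labels of its divisors change. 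Its complete $6$-divisor for $2U\oplus kA_2$ therefore becomes, divisor for divisor and with multiplicities preserved, the complete $2$-divisor for $2U(3)\oplus kA_2$. This gives exactly the asserted strong $2$-reflectivity with the complete $2$-divisor, with $\chi_2$ now read as the binary character of $\Orth^+(2U(3)\oplus kA_2)$ that is trivial on $\Tilde\Orth^+(2U(3)\oplus kA_2)$ and equals $-1$ on the reflections cutting out the divisor.

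The cusp-form claim I would settle by a weight count. The lattice $T=2U(3)\oplus kA_2$ has signature $(2k+2,2)$, hence singular weight $k$, and $12-3k=k$ precisely when $k=3$. Thus for $k=3$ the form $\Delta_{3,3A_2}$ has singular weight and cannot be a cusp form, since a nonzero holomorphic form of singular weight has Fourier expansion supported on isotropic vectors and does not vanish at the cusps; whereas for $k=1,2$ the forms $\Delta_{9,A_2}\in S_9$ and $\Delta_{6,2A_2}\in S_6$ are cusp forms, as their generating Jacobi forms $\eta^{16}\sigma_{A_2}$ and $\eta^{8}\sigma_{2A_2}$ are Jacobi cusp forms and the additive lifting preserves cuspidality under the discriminant-form criterion of \cite[Theorem~4.2]{GHS1}.

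I do not expect a genuine obstacle once Lemma~\ref{U(3)+3A_2} is in hand, since the statement is essentially a change of lattice coordinates via $A_2^*(3)\cong A_2$. The one point requiring care is the second step: one must check that the correspondence between the $6$-reflective vectors of $2U\oplus kA_2$ and the $2$-reflective vectors of $2U(3)\oplus kA_2$ is exhaustive on both sides, so that ``complete $6$-divisor'' transfers to ``complete $2$-divisor'' with no divisor lost or gained and with multiplicity one preserved. This is exactly the Eichler-criterion analysis of the orbits of reflective vectors in the discriminant group $(kA_2)^*/(kA_2)$ underlying Lemma~\ref{U(3)+3A_2}, together with the equality $[\Orth(A_2):W_2(A_2)]=2$ and $\Orth(A_2)=W(G_2)$ used there.
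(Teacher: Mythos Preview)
Your proposal is correct and follows essentially the same route as the paper. The paper's proof is literally the sentence ``The results, proved above, give'' Theorem~\ref{lift3A2}: the strongly $6$-reflective property of the $\Delta_{12-3k,\,kA_2}$ for $\Orth^+(2U\oplus kA_2)$ is established in the preceding paragraphs (via \cite[Theorem~4.2]{G10} for $k=3$ and the quasi pull-back argument for $k=1,2$), and Lemma~\ref{U(3)+3A_2} is the renormalisation that converts ``complete $6$-divisor for $2U\oplus kA_2$'' into ``complete $2$-divisor for $2U(3)\oplus kA_2$''. Your unpacking of this is accurate, and your treatment of the cusp-form claim (singular-weight obstruction at $k=3$, cuspidality of the Jacobi lift at $k=1,2$) is a clean way to make explicit what the paper records without further comment when it writes $\Delta_{6,2A_2}\in S_6$ and $\Delta_{9,A_2}\in S_9$.
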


\begin{remark} \label{Delta12-3kPhi45A2}
{\rm 1) The  Fourier expansions of reflective forms
$\Delta_{12-3k,\,kA_2}$ ($k=1$, $2$, $3$) are given by formula
(\ref{lift}).
All their Fourier coefficients are integral.
These modular forms determine three Lorentzian Kac--Moody algebras related
to the hyperbolic lattices $U(3)\oplus kA_2 $.
The lattice Weyl vector of the hyperbolic $2$-root system of $U(3)\oplus 3A_2 $
has norm $0$ since the modular form $\Delta_{3,\,3A_2}$ is of  singular weight.
{\it The corresponding hyperbolic root system  of signature $(7,1)$
is of the  parabolic type}.
\smallskip

2) The products of  reflective forms
$\Phi_{45, A_2}\cdot \Delta_{9,\,A_2}$,
$\Phi_{42, 2A_2}\cdot \Delta_{6,\,2A_2}$
and $\Phi_{39, 3A_2}\cdot \Delta_{3,\,3A_2}$
are strongly reflective modular forms with the complete reflective
divisor determined by all reflections in the corresponding lattices.
These modular forms determine three  Lorentzian Kac--Moody algebras
with the maximal  Weyl groups generated by all  $2$- and $6$-reflections
of the hyperbolic lattices (compare with  the root system $G_2$).
For these more general cases of reflections in roots with arbitrary squares,
one should follow general definitions from \cite{GN5}, \cite{GN6}, \cite{GN8}, \cite{Nik12}
of data (I)---(V) and Lorentzian Kac--Moody algebras instead of given in
Sec. \ref{sec:data1-5}.}
\end{remark}

\begin{remark}
\label{rem:K3discr}
{\rm K3 surfaces $X$
with transcendental lattices $T_X=T(-1)$ for
lattices $T$ of signature $(n,2)$
with strongly $2$-reflective modular forms $\Phi$
constructed above, have discriminants which are
divisors of $\Phi$, and these divisors are rational
quadratic divisors orthogonal to all $(-2)$-roots of $T_X=T(-1)$
and of multiplicity one.

These K3 surfaces can be considered as mirror
symmetric (by the arithmetic mirror symmetry defined
by the corresponding Lorentzian Kac--Moody algebras)
to K3 surfaces with Picard lattices $S_X$
from Remark \ref{rem:K3finautgroup+rho}.
See \cite{GN3}, \cite{GN7} and \cite{GN8} about
the arithmetic mirror symmetry.}
\end{remark}

\begin{remark}
\label{rem:finrefmodforms}
 {\rm We expect some finiteness results about the
set of reflective modular forms. The main reason is the K\"ocher principle.
See \cite{Nik9} about first observations. Recently, Sh. Ma (see \cite{Ma}) obtained some
finiteness results about 2-reflective modular forms.}
\end{remark}


Valery Gritsenko \par
\smallskip

Laboratoire Paul Painlev\'e et CEMPI\par
Universit\'e de Lille 1, France \par
Valery.Gritsenko@math.univ-lille1.fr \par
\smallskip

{\it Current Address}:\par
National Research University Higher School of Economics
\par
AG Laboratory, 7 Vavilova str., Moscow, Russia, 117312
\vskip0.5cm

\vskip5pt

Viacheslav V. Nikulin

\par Steklov Mathematical Institute,\par
ul. Gubkina 8, Moscow 117966, GSP-1,
Russia \par
Deptm. of Pure Mathem. The University of
Liverpool, \par
Liverpool\ \ L69\ 3BX,\ UK  \par
nikulin@mi.ras.ru\ \ vvnikulin@list.ru\ \ vnikulin@liv.ac.uk

\end{document}